\documentclass[11pt, reqno, a4paper]{amsart}

\usepackage{hyperref, amssymb, amsmath, enumerate, amsfonts, amsthm, dsfont, mathrsfs, url, bm}

\topmargin 1cm
\advance \topmargin by -\headheight
\advance \topmargin by -\headsep

\textheight 22.5cm
\oddsidemargin 1cm
\evensidemargin \oddsidemargin
\marginparwidth 1.25cm
\textwidth 14cm
\setlength{ \parskip}{0.05cm}

\newtheorem{theorem}{Theorem}[section] 
\newtheorem{lemma}[theorem]{Lemma}     


  
  \theoremstyle{remark}
\newtheorem{remark}[theorem]{Remark}
\newtheorem*{acknowledgements}{Acknowledgements}

\theoremstyle{definition}
\newtheorem{definition}[theorem]{Definition}

\theoremstyle{claim}
\newtheorem*{claim}{Claim}


\numberwithin{equation}{section}          

 \newcommand{\set}[1]{\left\{#1\right\}}
\newcommand{\bigset}[1]{\bigl\{ #1 \bigr\}}
\newcommand{\Bigset}[1]{\Bigl\{ #1 \Bigr\}}

\newcommand{\Oh}[1]{ O\left( #1\right)}
\newcommand{\abs}[1]{\left| #1\right|}
\newcommand{\bigabs}[1]{\bigl| #1 \bigr|}
\newcommand{\Bigabs}[1]{\Bigl| #1 \Bigr|}
\newcommand{\biggabs}[1]{\biggl| #1 \biggr|}

\newcommand{\sqbrac}[1]{\left[ #1 \right]}
\newcommand{\ceil}[1]{\left\lceil #1 \right\rceil}

\newcommand{\floor}[1]{\left\lfloor #1 \right\rfloor}
\newcommand{\brac}[1]{\left( #1 \right)}
\newcommand{\bigbrac}[1]{\bigl( #1 \bigr)}
\newcommand{\Bigbrac}[1]{\Bigl( #1 \Bigr)}
\newcommand{\biggbrac}[1]{\biggl( #1 \biggr)}
\newcommand{\Biggbrac}[1]{\Biggl( #1 \Biggr)}
\newcommand{\norm}[1]{\left\| #1\right\|}
\newcommand{\bignorm}[1]{\big\| #1 \big\|}

\newcommand{\recip}[1]{\frac{1}{#1}}
\newcommand{\trecip}[1]{\tfrac{1}{#1}}

\newcommand{\ve}{\mathbf{e}}
\newcommand{\vx}{\mathbf{x}}
\newcommand{\vy}{\mathbf{y}}
\newcommand{\valpha}{\bm{\alpha}}
\newcommand{\vbeta}{\bm{\beta}}

\newcommand{\vgamma}{\bm{\gamma}}

\newcommand{\vX}{\mathbf{X}}
\newcommand{\vP}{\mathbf{P}}

\newcommand{\va}{\mathbf{a}}

\newcommand{\vb}{\mathbf{b}}
\newcommand{\vc}{\mathbf{c}}
\newcommand{\vw}{\mathbf{w}}
\newcommand{\vz}{\mathbf{z}}
\newcommand{\vt}{\mathbf{t}}
\newcommand{\vn}{\mathbf{n}}
\newcommand{\vm}{\mathbf{m}}

\newcommand{\vr}{\mathbf{r}}

\newcommand{\vi}{\mathbf{i}}

\newcommand{\vs}{\mathbf{s}}
\newcommand{\vxi}{\bm{\xi}}
\newcommand{\vF}{\mathbf{F}}

\newcommand{\vT}{\mathbf{T}}

\newcommand{\vzeta}{\bm{\zeta}}
\newcommand{\veta}{\bm{\eta}}

\newcommand{\vsigma}{\bm{\sigma}}

\newcommand{\M}{\mathfrak{M}}
\newcommand{\m}{\mathfrak{m}}

\newcommand{\twosum}[2]{ \sum_{\substack{#1\\ #2}}}

\newcommand{\N}{\mathbb{N}}
\newcommand{\Z}{\mathbb{Z}}
\newcommand{\Q}{\mathbb{Q}}
\newcommand{\R}{\mathbb{R}}
\newcommand{\C}{\mathbb{C}}
\newcommand{\T}{\mathbb{T}}
\newcommand{\F}{\mathbb{F}}

\newcommand{\Jac}{\mathrm{Jac}}
\newcommand{\spn}{\mathrm{span}}

\newcommand{\adj}{\mathrm{adj}}

\newcommand{\meas}{\mathrm{meas}}

\newcommand{\intd}{\mathrm{d}}

\newcommand{\eps}{\varepsilon}
\newcommand{\hash}{\#}

\begin{document}

\title[Solution-free sets]{Solution-free sets for sums of binary forms}
\author{Sean Prendiville }
\subjclass[2000]{Primary 11P55; Secondary 11D45}

\thanks{During the completion of this work, the author was supported by an EPSRC doctoral training grant through the University of Bristol.}
\address{School of Mathematics, University of Bristol, University Walk, Clifton, 
Bristol BS8 1TW, United Kingdom}
\email{sean.prendiville@bristol.ac.uk}

\maketitle
\begin{abstract}
In this paper we obtain quantitative estimates for the asymptotic density of subsets of the integer lattice $\Z^2$ which contain only trivial solutions to an additive equation involving binary forms.  In the process we develop an analogue of Vinogradov's mean value theorem applicable to binary forms.
\end{abstract}
\section{Introduction}

Certain systems of linear equations have the property that, should a set of integers fail to deliver non-trivial solutions to the system, then the set has zero density.  The problem of obtaining quantitative asymptotic estimates for the density of such sets, first addressed successfully by Roth \cite{roth53, roth53II}, is one which has seen remarkable advances over the last decade;  spectacularly in the work of Gowers \cite{gowers01} on long arithmetic progressions, Bourgain \cite{bourgain99, bourgain08} on progressions of length three and Green and Tao \cite{greentao09} on progressions of length four.  Recently, M. L. Smith \cite{smith09} has obtained density estimates for sets of integers which do not contain solutions to a  class of homogeneous equations involving $k$th powers.  This was the first general result on inherently non-linear systems.  In this paper we not only generalise Smith's result from an equation involving $k$th powers to one involving binary forms, but we also extract density estimates for subsets of the two-dimensional integer lattice.  Our approach uses the density increment method of Roth and Gowers, together with the circle method.  A notable feature in our application of the circle method is a novel analogue of Vinogradov's mean value theorem, applicable to systems of equations involving binary forms.  Our approach to this mean value theorem makes intrinsic use of the structure of the shift-invariant system associated with our equation, and thereby improves on those estimates which can be deduced from the much more general work of Parsell \cite{parsell05} on multi-dimensional versions of Vinogradov's mean value theorem. 

In order to describe our conclusions, we first introduce some notation.  When $\Phi \in \Z[x,y]$ is a  binary form we write $\Phi^{u,v}$ for the derivative $\frac{\partial^{u+v}}{\partial x^u \partial y^v}\Phi(x,y)$.

\begin{definition}  Let us say the tuple $\vc = (c_1, \dots, c_s)$ of non-zero integers is a \emph{non-singular choice of coefficients} for $\Phi$ if there exist binary forms $\Phi_1, \dots, \Phi_N$ satisfying
\begin{equation}
\begin{split}
\set{ \Phi_1, \dots, \Phi_N} &\subset \set{\Phi^{u,v} : 0 \leq u+v < k}\\ & \subset\ \spn\set{ \Phi_1, \dots, \Phi_N},
\end{split}
\end{equation} 
such that the auxiliary system of equations
\begin{equation}
 c_1 \Phi_i(\vx_1) + \dots + c_s \Phi_i(\vx_s) = 0 \quad (1\leq i \leq N),
\end{equation}
has non-singular\footnote{Here \emph{non-singular} means the associated Jacobian has full-rank over the field in question.} real and $p$-adic solutions for every prime $p$.
\end{definition}

\begin{definition} We call a $2s$-tuple $(\vx_1, \dots, \vx_s)$ \emph{diagonal} if there exists an affine line $L = \va + \R \cdot \vb$ such that $\vx_i \in L$ for all $i$.
\end{definition}

Writing $[X]$ for the set $\set{1, 2, \dots, \floor{X}}$, the most accessible of our density results can now be stated.

\begin{theorem}\label{intro result 1} Let $\Phi \in \Z[x,y]$ be a binary form of degree $k\geq 2$ and let $\vc \in \Z^s$ be a non-singular choice of coefficients for $\Phi$, with $c_1 + \dots + c_s = 0$ .  Suppose that $s \geq \tfrac{3}{4}k^3 \log k(1 + o(1))$.  Then any set $A \subset [X]^2$ containing only diagonal solutions to the equation 
\begin{equation}\label{single equation}
 c_1 \Phi(\vx_1) + \dots + c_s \Phi(\vx_s) = 0 \qquad (\vx_i \in A),
\end{equation}
satisfies the bound
\begin{equation}\label{density bound 1}
 |A| \ll X^2\brac{\log\log X}^{-1/(s-1)},
\end{equation}
where the implicit constant depends only on $\vc$ and $\Phi$.
\end{theorem}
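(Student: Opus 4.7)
The plan is to combine the density-increment strategy of Roth and Gowers with the Hardy--Littlewood circle method, using the paper's binary-form analogue of Vinogradov's mean value theorem for the minor arc estimate. Set $\delta = |A|/X^2$ and introduce $f_A(\alpha) = \sum_{\vx \in A} e(\alpha \Phi(\vx))$, so that the number of solutions to \eqref{single equation} with each $\vx_i \in A$ equals
\[
T(A) = \int_0^1 \prod_{i=1}^s f_A(c_i \alpha)\, d\alpha.
\]
Since any line meets $[X]^2$ in at most $X+1$ integer points and any two distinct points determine a unique line, the number of diagonal tuples in $A^s$ is $O(\delta^2 X^{s+2})$; the hypothesis thus forces $T(A) \ll \delta^2 X^{s+2}$.

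I would then carry out a Hardy--Littlewood dissection $\T = \M \cup \m$. On the major arcs, the non-singular local solubility assumption, combined with translation invariance via $c_1+\dots+c_s=0$, delivers a main term of the shape $\mathfrak{S}(\vc,\Phi)\,J(\vc,\Phi)\,\delta^s X^{2s-k}$, with singular series and singular integral strictly positive by the very definition of a non-singular choice of coefficients. On the minor arcs I would apply H\"older's inequality in the form
\[
\biggl|\int_\m \prod_i f_A(c_i\alpha)\, d\alpha\biggr| \leq \Bigl(\sup_{\alpha \in \m}|f_A(\alpha)|\Bigr)^{s - 2s_0} \int_0^1 |f_A(\alpha)|^{2s_0}\, d\alpha,
\]
with $2s_0$ chosen just above the binary-form Vinogradov threshold, so that $\int_0^1 |f_A(\alpha)|^{2s_0}\, d\alpha \ll X^{4s_0 - k + o(1)}$, and close out with a Weyl-type bound on $\sup_{\alpha \in \m}|f_A(\alpha)|$.

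Combining these estimates produces the standard dichotomy. Either the minor arcs are dominated by the major-arc main term, in which case $\delta^s X^{2s-k} \ll T(A) \ll \delta^2 X^{s+2}$ already yields a polynomial density bound far beyond that claimed; or $f_A$ exhibits an unusually large Fourier coefficient at some rational $\alpha = a/q$ of modest denominator, from which pigeonholing over residues modulo $q$ extracts a sub-grid of $[X]^2$ on which $A$ has density at least $\delta(1+\kappa)$ for some $\kappa = \kappa(s,\Phi)>0$. The non-singular coefficient hypothesis is engineered precisely so that, after rescaling, the descended equation is again of the correct shape, carrying a non-singular tuple of coefficients for an appropriate collection of derivatives $\Phi^{u,v}$; the diagonal property likewise descends to the sub-grid. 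Iterating the increment, and reconciling the per-step scale loss with the number of increment steps required to reach density $1$, then delivers the stated bound $|A| \ll X^2 (\log\log X)^{-1/(s-1)}$.

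The main technical obstacle is the minor-arc mean value estimate: the two-dimensional shift-invariant structure of $\Phi$ has to be exploited to improve on the general-purpose multi-dimensional Vinogradov bounds of Parsell, and the resulting threshold is what produces the hypothesis $s \geq \tfrac{3}{4} k^3 \log k\,(1+o(1))$. A secondary nuisance is to verify that the auxiliary system encoded by $\vc$ continues to admit non-singular local solutions on each sub-grid visited by the density increment, so that the major-arc main term persists unchanged at every scale of the iteration.
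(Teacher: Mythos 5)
There is a genuine structural gap: you run the circle method and the density increment for the single equation \eqref{single equation}, using the one-dimensional sum $f_A(\alpha)=\sum_{\vx\in A}e(\alpha\Phi(\vx))$ on $\T^1$, and defer the passage to the derivatives $\Phi^{u,v}$ to the moment when you rescale to a sub-grid. But equation \eqref{single equation} is not translation--dilation invariant: under $\vx\mapsto q\vx+\vr$ it becomes an equation involving \emph{all} the forms $\Phi^{u,v}$ with coefficients depending on $q$ and $\vr$, so after one increment step you are no longer counting solutions to the object you set up the circle method for, the "non-singular choice of coefficients" hypothesis no longer refers to a fixed system, and the iteration does not close. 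The paper resolves this by making the reduction at the very start, not at the end: a set with only diagonal solutions to \eqref{single equation} a fortiori has only diagonal solutions to the invariant system \eqref{non-singular solution equation 2}, and the entire argument is then carried out for that system — the exponential sum is $f(\valpha)=\sum_{\vx}e(\valpha\cdot\vF(\vx))$ over the $N$-dimensional torus, the expected count is $X^{2s-K}$ with $K$ the differential degree (not $X^{2s-k}$), the mean value theorem (Theorem \ref{intro VMVT}) bounds moments of this $N$-dimensional sum (it does not bound $\int_0^1|\sum_\vx e(\alpha\Phi(\vx))|^{2s_0}\,d\alpha$ as you use it), and the increment is extracted from a large Fourier coefficient of the balanced function at an \emph{arbitrary} $\valpha\in\T^N$ via Lemma \ref{small diameter lemma}, which partitions the square into affine sub-grids on which the real phase polynomial $\valpha\cdot\vF$ is nearly constant; no rational approximation of the frequency is needed at that stage. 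The threshold $\tfrac34k^3\log k$ itself reflects this: it is $kN(\log K+\log\log K+O(1))$ with $N\approx k^2/4$, i.e.\ the cost of the circle method for the full system, not a single-equation Vinogradov threshold.

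Two further points. First, your minor-arc step applies a Weyl-type bound to $\sup_{\alpha\in\m}|f_A(\alpha)|$, but no such pointwise bound exists for a sum restricted to an arbitrary set $A$; Weyl/Vinogradov estimates apply to the complete sum over the box, and in the increment argument the supremum must be taken of the balanced function $g_A=f_A-\delta f$, with "$\sup|g_A|$ large" being precisely the increment case (moments of $f_A$ may be compared with moments of $f$ by positivity, but suprema may not). Second, a small slip: the number of diagonal tuples in $A^s$ is $O(|A|^2X^s)=O(\delta^2X^{s+4})$, not $O(\delta^2X^{s+2})$; this is harmless since one still has $2s-K>s+4$, but the comparison should be stated with the invariant system's exponent $2s-K$ rather than $2s-k$.
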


\begin{remark}

For a more precise lower bound on the number of variables required than $s \geq \tfrac{3}{4} k^3 \log k(1 + o(1))$, see Theorem \ref{full density theorem}. 
\end{remark}

For comparison, recent work of Smith \cite{smiththesis} establishes a version of the above result in which $\Phi$ is replaced by a $k$th power and the set $A$ is a subset of the integers in the interval $[1, X]$.  Indeed, our insistence that $A$ contains only diagonal solutions to \eqref{single equation} precludes the deduction of Theorem \ref{intro result 1} from Smith's result.  We also note that Smith obtains an exponent of $\log\log N$ in \eqref{density bound 1} of the form $-2^{-2^k}$.  

One can obtain a qualitative version of Theorem \ref{intro result 1} by applying the multidimensional Szemer\'{e}di theorem of Furstenberg and Katznelson \cite{furstenbergkatznelson78}.  In this way, one can show that any (infinite) set $A \subset \Z^2$ containing only diagonal solutions to \eqref{single equation} must have zero upper Banach density.  If one had a quantitative version of the multi-dimensional Szemer\'{e}di theorem providing bounds analogous to the one-dimensional bounds of Gowers \cite{gowers01}, then one could use this result to obtain bounds of the form \eqref{density bound 1} in Theorem \ref{intro result 1}.  However, the exponent of $\log\log N$ in these bounds would be intrinsically dependent on the choice of form $\Phi$ and coefficients $c_1, \dots, c_s$, whereas our result depends only on $s$.  Moreover, no such two-dimensional bounds currently exist; the best bounds presently available are due to Shkredov \cite{shkredov05} and are not general enough for our purposes.

To obtain Theorem \ref{intro result 1}, we bound the density of sets which contain only diagonal solutions to the larger system of equations
\begin{equation}\label{non-singular solution equation 2}
c_1\Phi^{u,v}(\vx_1) + \dots + c_s\Phi^{u,v}(\vx_s) = 0 \qquad (0 \leq u+v < k).
\end{equation}
Sets avoiding non-diagonal solutions to this larger system may have greater size than those avoiding non-diagonal solutions to \eqref{single equation}. However, a key observation is that this larger system enjoys \emph{translation-dilation invariance}, in that $(\vx_1, \dots, \vx_s)$ satisfies \eqref{non-singular solution equation 2} if and only if $(\lambda\vx_1 + \vxi, \dots, \lambda \vx_s + \vxi)$ satisfies \eqref{non-singular solution equation 2}, whenever $\lambda \neq 0$.  This invariance allows us to adapt the density increment method of Roth and Gowers \cite{roth53, gowers01}.

In order to implement the density increment method it is necessary to have an asymptotic estimate for the number of solutions to \eqref{non-singular solution equation 2} with variables restricted to the interval $[1, X]$.  This we obtain through an application of the Hardy--Littlewood method.  In order to deal with the minor arcs, we utilise Vinogradov's method\footnote{See Chapter 4 of \cite{montgomery94} for a description of this method.}, which necessitates the estimation of the number $J_{s, \Phi}(X)$ of solutions $(\vx, \vy) \in [X]^{4s}$ to the system of equations
\begin{equation}\label{dependent vino system}
\sum_{j=1}^s \Phi^{u,v}(\vx_j) = \sum_{j=1}^s \Phi^{u,v} (\vy_j)\quad (0 \leq u+v < k).
\end{equation}
When $\Phi$ takes the form  $a(b x + c y)^k$, such an estimate can be obtained from the standard Vinogradov mean value theorem, as found in \cite[Chapter 5]{vaughan97}.  We must therefore treat the remaining case.
 \begin{definition}  We say a binary form $\Phi \in \Z[x,y]$ of degree $k$ is \emph{degenerate} if it takes the form $(\alpha x + \beta y)^k$ for some $\alpha, \beta \in \C$.  One can check that $\Phi$ is degenerate if and only if there exist $a,b,c \in \Z$ such that $\Phi = a(bx + cy)^k$. 
 \end{definition}
 
 \begin{definition}
 We define the \emph{differential dimension} of $\Phi$ to be the dimension $N$ of the linear span of the set of non-constant derivatives
 \begin{equation}\label{set of derivatives}
 \set{\Phi^{u,v} : 0 \leq u+v < k}.
 \end{equation}
Given a maximal linearly independent subset $\set{F_1, \dots, F_N}$ of  \eqref{set of derivatives}, we define the \emph{differential degree} of $\Phi$  to be the quantity
 \begin{equation}
K = \sum_i \deg F_i.
\end{equation}
Elementary linear algebra confirms that $K$ is independent of our choice of $F_i$.
\end{definition}

Our mean value theorem for non-degenerate binary forms is then the following.

\begin{theorem}\label{intro VMVT} Let $\Phi \in \Z[x,y]$ be a non-degenerate binary form of degree $k$, differential dimension $N$ and differential degree $K$.  Write $M = \ceil{N/2}$, and define
\begin{equation}\label{exponent decay}
\Delta_s  = K\brac{1-\trecip{k}}^{\floor{s/M}}.
\end{equation} 
Then we have the bounds
\begin{equation}\label{mean value theorem bounds}
X^{4s - K}  \ll J_{s, \Phi}(X) \ll X^{4s - K + \Delta_s},
\end{equation}
where the implicit constants depend only on $s$ and $\Phi$.
\end{theorem}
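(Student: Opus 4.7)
\emph{Proof plan.} Fix a basis $\set{F_1, \dots, F_N}$ of the span of the non-constant derivatives $\set{\Phi^{u,v} : 0 \leq u+v < k}$, chosen so that $\sum_i \deg F_i = K$. Define the exponential sum
\[
f(\valpha) = \sum_{\vx \in [X]^2} e\biggbrac{\alpha_1 F_1(\vx) + \cdots + \alpha_N F_N(\vx)},
\]
so that orthogonality gives $J_{s, \Phi}(X) = \int_{\T^N} |f(\valpha)|^{2s}\,\rd\valpha$. The lower bound I would obtain via the Hardy--Littlewood heuristic: on the box $\set{\valpha : |\alpha_i| \leq c X^{-\deg F_i}}$, choosing $c = c(\Phi)$ sufficiently small ensures $|f(\valpha)| \gg X^2$, and since the box has volume $\gg X^{-K}$, integration delivers $J_{s, \Phi}(X) \gg X^{4s - K}$.

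For the upper bound, my plan is to adapt Vinogradov's efficient differencing method to the two-dimensional setting. The structural input is translation invariance of \eqref{dependent vino system}: Taylor expansion yields
\[
\Phi^{u,v}(\vx + \vxi) - \Phi^{u,v}(\vx) = \sum_{\substack{a+b \geq 1 \\ u+v+a+b \leq k}} \frac{\xi_1^a \xi_2^b}{a!\,b!}\, \Phi^{u+a, v+b}(\vx),
\]
which lies in the span of the non-constant derivatives of $\Phi$ together with the constants. Consequently \eqref{dependent vino system} is preserved under any common shift $\vx_j \mapsto \vx_j + \vxi$, $\vy_j \mapsto \vy_j + \vxi$. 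The goal is a functional inequality of the schematic shape
\[
J_{s+M, \Phi}(X) \ll X^{\delta}\, J_{s, \Phi}(Y)^{\theta},
\]
with $Y \asymp X^{1 - 1/k}$ and $\delta, \theta > 0$ determined by the rescaling. I would prove this by selecting $M = \ceil{N/2}$ generic shifts $\vh_1, \dots, \vh_M \in [Y]^2$, applying H\"older's inequality to peel off $M$ of the $s + M$ pairs of variables in $|f(\valpha)|^{2(s+M)}$, and using translation invariance to reduce each shifted exponential sum to one over $[Y]^2$. Iterating this $\floor{s/M}$ times from a trivial base case produces the geometric decay $\Delta_s = K\brac{1 - \trecip{k}}^{\floor{s/M}}$, since each step contracts the excess exponent by a factor $1 - 1/k$ through the substitution $X \mapsto X^{1 - 1/k}$.

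The main obstacle is the differencing step itself, specifically the selection of the shifts $\vh_l$. In one dimension the canonical choice $\vh_l = l$ yields a Vandermonde matrix that is automatically non-singular, but here one must exhibit $M$ integer points whose associated $N \times 2M$ Taylor-type matrix, with entries built from evaluations $\Phi^{u+a, v+b}(\vh_l)$, has full rank. This is precisely where non-degeneracy enters: if $\Phi = a(bx + cy)^k$, every $\Phi^{u,v}$ is a scalar multiple of $(bx + cy)^{k-u-v}$, the image of $\vh \mapsto (F_1(\vh), \dots, F_N(\vh))$ is trapped on a one-dimensional curve, and the requisite rank cannot be attained. Under the non-degeneracy hypothesis this image spans $\R^N$, and the Zariski density of the non-singular locus delivers the required integer shifts; the value $M = \ceil{N/2}$ reflects the fact that each of the $M$ integer points contributes two real degrees of freedom to balance the $N$ constraints.
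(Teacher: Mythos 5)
Your lower bound is fine (a standard major-arc computation; the paper instead sums $J_{s,\Phi}(X;\vm)$ over the $O(X^K)$ admissible $\vm$, but the two arguments are interchangeable). The upper bound, however, has a gap at exactly the point you identify as "the main obstacle", and your proposed resolution rests on a false implication. You argue that non-degeneracy makes the image of $\vh \mapsto \vF(\vh)$ span $\R^N$, and that "Zariski density of the non-singular locus" then supplies the required rank-$N$ Jacobian at $M$ integer points. But spanning $\R^N$ is automatic from the linear independence of $F_1,\dots,F_N$ and holds even for degenerate $\Phi$: there the image is the moment curve $\set{(t^k,\dots,t)}$, which spans $\R^k$, yet the $M$-fold sum map $(\vx_1,\dots,\vx_M)\mapsto \sum_j \vF(\vx_j)$ has image of dimension at most $M=\ceil{k/2}<N$, so the relevant $N\times N$ determinant vanishes identically. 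What is actually needed is that the $M$-fold sum map is \emph{dominant}, equivalently that the determinant $\Delta(\vx_1,\dots,\vx_M)$ of the first $N$ columns of the $N\times 2M$ Jacobian is not the zero polynomial. This is the real content of non-degeneracy and it requires a genuine argument: the paper proves it (Lemma \ref{non-zero jacobian}) by normalising the $F_i$ to a row-echelon form $G_i$, computing the leading monomials of the Wronskian-type determinants $G_{i-1}^{1,0}G_i^{0,1}-G_{i-1}^{0,1}G_i^{1,0}$, and inducting on the $2r\times 2r$ bottom-corner minors, with Lemma \ref{no one dim space} ruling out the degenerate configurations. Your sketch has no substitute for this.

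Beyond that, your differencing step is only schematic, and the scheme you describe is not the one the paper uses: the paper deliberately avoids efficient differencing (which it attributes to Parsell and calls "formidable") in favour of Linnik's $p$-adic method. Concretely, "apply H\"older to peel off $M$ pairs and use translation invariance" does not by itself produce $J_{s,\Phi}(X)\ll(\cdots)\,J_{s-M,\Phi}(X^{1-1/k})$ with the crucial saving of $X^{-K}$ per step. In the paper that saving comes from choosing a prime $p\asymp X^{1/k}$, restricting the $s-M$ undistinguished pairs to a residue class mod $p$ (this is what forces the scale reduction $X\mapsto X/p\asymp X^{1-1/k}$), and counting the $M$ distinguished pairs by a Hensel-type congruence lemma (Lemma \ref{auxilliary congruence count lemma}, via Wooley's theorem) which is where $\Delta\not\equiv 0\pmod p$ and the factor $p^{2Mk-K}$ enter. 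Your sketch supplies no mechanism generating this factor. You also omit entirely the singular solutions --- tuples for which \emph{every} choice of $M$ points annihilates $\Delta$ --- which contribute a term of order $X^{2s+M-1}$ (Lemma \ref{highly singular lemma}) and can only be absorbed into $X^{4s-K+\Delta_s}$ after proving the separate inequality $K/k\leq M+\trecip{2}$ (Lemma \ref{Delta inequality corollary}). As it stands the proposal is a correct statement of the strategy's shape with the three load-bearing lemmas missing.
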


We remark that when $\Phi$ is a degenerate binary form, then $K = k(k+1)/2$ and $N = k$.  Hence our result is comparable  to the standard Vinogradov mean value theorem, where one obtains
$$
\Delta_s \leq \trecip{2} k^2 \brac{1 - \trecip{k}}^{\floor{s/k}}.
$$  
Using very general work of Parsell \cite{parsell05}, one can extract a bound on the exponent $\Delta_s$ in Theorem \ref{intro VMVT} of the form 
$$
\Delta_s \leq rk\, e^{2- 2s/rk},
$$
where $r = (k+2)(k+3)/2 - 1$.  By way of comparison, an immediate consequence of Theorem \ref{intro VMVT} is the bound
$$
\Delta_s \leq Ke^{-\frac{1}{k}\floor{2s/(N+1)}},
$$ 
and one certainly has $K < rk$ and $N < r$.  Moreover, Parsell's general theorem is obtained through the somewhat formidable method of \emph{repeated efficient differencing}.  We are able to extract our result from the comparatively simple $p$-adic iterative method, originating with Linnik \cite{linnik43}, and reaching a refined state in work of Karatsuba \cite{karatsuba73} and Stechkin \cite{stechkin75}.

An expert in the field might hope to apply the above result via Vinogradov's method to obtain superior bounds for exponential sums over binary forms, at least when $k$ is large.  However, as demonstrated in Wooley \cite[\S 8]{wooley99}, one can already attain such bounds using the standard Vinogradov mean valued theorem.

\subsection{Notation}

Throughout the remainder of the paper we fix a non-degenerate binary form $\Phi$ of degree $k$, differential dimension $N$ and differential degree $K$.  We reserve the letter $M$ for the quantity $\ceil{N/2}$.  Let us also fix $\set{F_1, \dots, F_N}$, a maximal linearly independent subset of $\set{\Phi^{u,v} : 0 \leq u+ v < k}$.  Let $\vF$ denote the tuple $(F_1, \dots, F_N)$.  Setting $k_i = \deg F_i$, we always assume that   $k = k_1 \geq k_2 \geq \dots \geq k_N = 1$.   Using Taylor's formula, a convenient consequence of our ordering of the $F_i$ is that for any $\vxi \in \Z^2$ there exists a lower unitriangular\footnote{A lower triangular matrix with all diagonal entries equal to one.} matrix $\Xi_{\vxi} \in GL_N(\Q)$ such that
\begin{equation}\label{matrix translation invariance}
\vF(\vx + \vxi) = \Xi_{\vxi} \cdot \vF(\vx) + \vF(\vxi).
\end{equation}
We call this property \emph{translation-dilation invariance}, since it implies that for any $\vxi \in \R^2$ and $\lambda \neq 0$ we have the equivalence
\begin{equation}\label{equation translation invariance}
\sum_{j =1}^s \Bigbrac{\vF(\vx_j) - \vF(\vy_j)} = 0 \quad \Longleftrightarrow \quad \sum_{j =1}^s \Bigbrac{\vF(\lambda \vx_j +\vxi) - \vF(\lambda \vy_j+ \vxi)} = 0.
\end{equation}
Given a real $X \geq 1$ write $[X]$ for $\set{1, 2, \dots, \floor{X}}$.  We use $J_{s, \Phi}(X; \vm)$ to denote the number of $(\vx, \vy) \in [X]^{4s}$ satisfying
\begin{equation}
\sum_{j=1}^s \Bigbrac{\vF(\vx_j) - \vF(\vy_j)} = \vm.
\end{equation}
Notice that $J_{s, \Phi}(X; \mathbf{0})$ coincides with our definition of $J_{s, \Phi}(X)$.

We analyse both the equations \eqref{non-singular solution equation 2} and \eqref{dependent vino system} via the exponential sum
\begin{equation}\label{f sum defn}
f(\valpha) = f(\valpha; X) = \sum_{\vx \in [X]^2} e(\valpha\cdot \vF(\vx)),
\end{equation}
where $e(y) = e^{2\pi i y}$.  By the orthogonality relations we have
\begin{equation}\label{orthogonality relations}
J_{s, \Phi}(X; \vm) = \oint |f(\valpha)|^{2s}e(\valpha\cdot \vm) \intd\valpha,
\end{equation}
where $\oint$ denotes the integral over the $N$-dimensional torus $\T^N = \R^N/\Z^N$.

Throughout, we assume that $X$ is sufficiently large in terms of $s$, $\vc$ and $\vF$, so all implicit constants depend only on these parameters, unless otherwise indicated.  We note that $\vF$ depends ultimately only on $\Phi$.

\section{The Mean Value Theorem}\label{mean value section}

Before working towards upper bounds for $J_{s,\Phi}(X)$, let us derive an elementary lower bound.  By \eqref{orthogonality relations}, for any $\vm$ we have  $J_{s, \Phi}(X; \vm) \leq J_{s, \Phi}(X)$.  Notice that there are $O_{\vF,s}(X^K)$ values of $\vm$ for which $J_{s,\Phi}(X; \vm)$ is non-zero.  Summing over these values, we obtain
\begin{equation}\label{lower bound proof}
X^K J_{s,\Phi}(X) \gg X^{4s}.
\end{equation}
The lower bound in \eqref{mean value theorem bounds} follows.

The remainder of this section is occupied with proving the upper bound in \eqref{mean value theorem bounds}.  We expect the majority of solutions to  \eqref{dependent vino system} to be non-singular (in a sense to be defined later), whilst the remaining set of singular solutions should be relatively sparse.  To define the appropriate notion of singularity neccesitates the discussion of the Jacobian associated to \eqref{dependent vino system}.  

\begin{definition}
Write $\Jac (\vx_1, \dots, \vx_M)$ for the $N\times 2M$ matrix 
\begin{equation}\label{Jac defn}
\Bigbrac{ F_i^{1,0}(\vx_j),\ F_i^{0,1}(\vx_j)}_{\substack{ 1\leq i \leq N\\ 1 \leq j \leq M}},
\end{equation}
and let $\Delta(\vx_1, \dots, \vx_M)$ denote the determinant of the $N \times N$ matrix consisting of the first $N$ columns of $\Jac(\vx_1, \dots, \vx_M)$. 
\end{definition} In order to get our version of Linnik's $p$-adic iterative method to work, $\Delta$ cannot be identically zero.  Notice that if $\Phi$ is degenerate, then $\Delta$ \emph{is} identically zero.  Our first lemma, Lemma \ref{no one dim space}, feeds into our second, Lemma \ref{non-zero jacobian}, which establishes that $\Delta$ is non-zero when and only when $\Phi$ is non-degenerate.  We keep Lemma \ref{no one dim space} separate as it proves useful later.

\begin{lemma}\label{no one dim space}
Suppose there exists $1 \leq l < \deg \Phi$ such that the linear span of the set $\set{\Phi^{u,v}: u + v =  l}$ is one-dimensional.  Then $\Phi$ is degenerate.
\end{lemma}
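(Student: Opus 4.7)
The plan is to exploit the one-dimensional span condition to force $\Phi$ to be a pure $k$-th power of a single linear form. Under the hypothesis, every derivative $\Phi^{u,v}$ with $u + v = l$ can be written as $\mu_{u,v} G$ for scalars $\mu_{u,v}$ and a single homogeneous form $G$ of degree $k - l$. If all the $\mu_{u,v}$ vanish, then every $l$-th derivative of $\Phi$ is zero, so $\Phi$ has total degree strictly less than $l$; combined with $\deg \Phi = k > l$ this forces $\Phi = 0$, which is trivially degenerate. Thus I may assume some $\mu_{u,v}$ is non-zero.

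Next I compare $\Phi^{u,v+1}$ computed first as $\partial_y \Phi^{u,v}$ and second as $\partial_x \Phi^{u-1, v+1}$, obtaining the identity
\begin{equation*}
\mu_{u,v}\, G_y \;=\; \mu_{u-1, v+1}\, G_x
\end{equation*}
for every $u \geq 1$ with $u + v = l$. A short case analysis on which of $\mu_{u,v}$ and $\mu_{u-1, v+1}$ vanish (using the non-triviality assumption above) shows that $G_x$ and $G_y$ satisfy a linear relation with constant coefficients; thus $G$ is a homogeneous solution of a first-order linear PDE with constant coefficients, which forces $G = \gamma L^{k-l}$ for some linear form $L = bx + cy$ over $\C$.

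To finish, I change coordinates linearly over $\C$ so that $L$ becomes the coordinate $y$, observing that the span of the $l$-th derivatives of $\Phi$ is preserved under such a change by the chain rule. In the new coordinates the hypothesis reads $\Phi^{u,v}(x, y) = \nu_{u,v}\, y^{k-l}$ for every $u + v = l$. Writing $\Phi = \sum_{i + j = k} a_{i,j}\, x^i y^j$ and applying $\partial_x^u \partial_y^v$ reveals that $a_{i, j}$ must vanish whenever $u < i \leq k - v$. Letting $(u, v) = (j, l - j)$ range over $u + v = l$, the resulting intervals $[j+1, k - l + j]$ for $j = 0, 1, \dots, l$ cover the set $\{1, 2, \dots, k\}$, so every $a_{i, j}$ with $i \geq 1$ is zero. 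This leaves $\Phi = a_{0, k}\, y^k$, which in the original coordinates is $c L^k$, a pure $k$-th power of a linear form, proving $\Phi$ is degenerate.

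The main obstacle is the case analysis in the second step: the displayed identity yields a scalar ratio between $G_y$ and $G_x$ only when both coefficients are non-zero, so one must separately treat the situations where one of $G_x$ or $G_y$ vanishes identically. These edge cases in fact produce the sharper conclusions $G = \gamma y^{k-l}$ or $G = \gamma x^{k-l}$, which are already $(k-l)$-th powers of linear forms, so no genuine difficulty arises, but the bookkeeping does require care.
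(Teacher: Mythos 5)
Your argument is correct. Its first half is essentially the paper's: both exploit equality of mixed partials, $\partial_x\Phi^{u-1,v+1}=\partial_y\Phi^{u,v}$, together with the one-dimensional span to force a constant-coefficient relation between the $x$- and $y$-derivatives of the generator ($G$ for you, $F_i$ in the paper), whence the generator is a power of a single linear form $L$; your edge cases ($G_x\equiv 0$ or $G_y\equiv 0$) correspond to the paper's implicit case split and are handled correctly. Where you genuinely diverge is the passage from ``$G=\gamma L^{k-l}$'' to ``$\Phi = cL^k$''. The paper pins down the proportionality constants $\lambda_r$ in $\Phi^{l-r,r}=\lambda_r F_i$ (showing $\lambda_r=(\alpha/\beta)^{l-r}\lambda_l$) and then reconstructs $\Phi$ from its $l$-th derivatives via the identity $\Phi = \frac{(k-l)!}{k!}\sum_{r}\binom{l}{r}x^{l-r}y^r\Phi^{l-r,r}$; you instead make an invertible linear change of variables sending $L$ to $y$ (noting, correctly, that the span of the $l$-th derivatives and the notion of degeneracy are both preserved), and then read off that each coefficient $a_{i,k-i}$ with $i\geq 1$ dies because the vanishing ranges $j< i\leq k-l+j$, $0\leq j\leq l$, cover $\set{1,\dots,k}$. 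Your finish buys a cleaner treatment of the boundary cases (no division by $\beta$, no ``the case $r<l$ is similar'') and avoids the reconstruction identity entirely, at the modest cost of a coordinate change and some monomial bookkeeping; the paper's route keeps everything in the original coordinates and produces the constant $c$ explicitly.
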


\begin{proof}
For any binary form $\Phi$ of degree $k$ and $0 \leq l \leq k$ one can show by induction that
\begin{equation}\label{derivative identity}
\Phi = \frac{(k-l)!}{k!} \sum_{r = 0}^{l} \binom{l}{r} x^{l-r}y^r \Phi^{l-r, r}.
\end{equation}
Let $F_i$ be the only form from $F_1, \dots, F_N$ with degree $k-l$.  Then for each $r  = 0, \dots, l$ there exists $\lambda_r\in \Q$ such that $\Phi^{l-r, r} = \lambda_r F_i$.  We must have $\lambda_r \neq 0$ for some $r$.  Let us suppose that $r >0$, the case $r< l$ being similar.  We have 
$$
F_i^{1,0} = \lambda_r^{-1} \Phi^{l-r+1, r} = \lambda_r^{-1}\lambda_{r-1} F_i^{0,1}.
$$
Letting $\lambda = \lambda_r^{-1}\lambda_{r-1}$ and iterating one sees that for all $0\leq s \leq k_i$ we have $F_i^{k_i -s, s} = \lambda^{k_i -s} F_i^{0, k_i}$.  Using this and \eqref{derivative identity}, it follows that
\begin{align*}
F_i & = \frac{F_i^{0, k_i}}{k_i!} \sum_{r =0}^{k_i} \binom{k_i}{r} \lambda^{k_i - r} x^{k_i -r } y^r \\
	& =  (\alpha x + \beta y)^{k_i}
\end{align*}
for some real $\alpha$ and $\beta$, with $\beta \neq 0$.  Differentiating in $y$ we see that 
$$
\Phi^{l -r , r+1} = \lambda_r F_i^{0,1} = \lambda_r \beta k_i (\alpha x + \beta y)^{k_i -1}.
$$
Differentiating in $x$ when $r < l$ we also see that
$$
\Phi^{l-r, r+1}  = \lambda_{r+1} F_i^{1,0} = \lambda_{r+1} \alpha k_i (\alpha x + \beta y)^{k_i -1}.
$$
Thus for each $r=0,1, \dots, l$, one obtains
$$
\lambda_r = (\alpha/\beta) \lambda_{r+1} = \dots = (\alpha/\beta)^{l-r} \lambda_l.
$$
Inputting this into \eqref{derivative identity} we deduce that
\begin{align*}
\Phi & = \frac{l!}{k!} \sum_{r = 0}^{l} \binom{l}{r} x^{l-r}y^r (\alpha/\beta)^{l-r} \lambda_l (\alpha x+\beta y)^{k-l}\\
& =  \frac{l!\lambda_l}{k!\beta^{l}} \brac{\alpha x+\beta y}^k.
\end{align*}
Therefore $\Phi$ is degenerate.
\end{proof}


\begin{lemma}\label{non-zero jacobian} If $\Phi$ is a non-degenerate binary form, then the determinant $\Delta$ is not the zero polynomial.
\end{lemma}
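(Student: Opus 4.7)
I will argue by contrapositive: assume $\Delta \equiv 0$ and show $\Phi$ must be degenerate.

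Setup. Since $\Phi$ is non-degenerate, Lemma \ref{no one dim space} applied at $l = k-1$ forces the linear slice $\{\Phi^{u,v} : u+v = k-1\}$ to span a two-dimensional space of linear forms. A linear change of variables in $(x,y)$ preserves both non-degeneracy of $\Phi$ and whether $\Delta$ vanishes identically, so I may assume without loss that $F_{N-1} = x$ and $F_N = y$, and hence that $\deg F_i \geq 2$ for every $i \leq N-2$.

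The key reduction is to specialize $\vx_1 = \mathbf{0}$.  Because each $F_i$ with $i \leq N-2$ is a form of degree at least $2$, both of its first partials vanish at the origin, whereas $F_{N-1}^{1,0}(\mathbf{0}) = F_N^{0,1}(\mathbf{0}) = 1$ and the other partials vanish.  Thus the first two columns of the first-$N$-columns submatrix of $\Jac(\mathbf{0}, \vx_2, \ldots, \vx_M)$ are the standard basis vectors $\ve_{N-1}$ and $\ve_N$.  After column subtractions that clear rows $N-1$ and $N$ in every subsequent column, Laplace expansion along those two rows yields
$$
\Delta(\mathbf{0}, \vx_2, \ldots, \vx_M) = \pm \det B,
$$
where $B$ is the $(N-2)\times(N-2)$ matrix recording the first $N-2$ coordinates of $\vF_x(\vx_j), \vF_y(\vx_j)$ for $j = 2, \ldots, M$ (omitting the final $\vF_y$-column when $N$ is odd).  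Thus $\det B$ plays the role of $\Delta$ for the smaller linearly independent family $(F_1, \ldots, F_{N-2})$ of forms of degree $\geq 2$, evaluated at the $M-1$ remaining points.

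I would then close by induction on $N$, with the base cases $N \in \{2,3\}$ handled by direct computation: for instance, when $\Phi$ is a non-degenerate quadratic $ax^2+bxy+cy^2$, one finds $\Delta = (4ac-b^2)\,\Phi^{1,0}(\vx_2-\vx_1)$, which is non-zero precisely under the hypothesis.  The main obstacle in the inductive step is that $(F_1, \ldots, F_{N-2})$ is not itself the set of non-constant derivatives of a lower-degree form, so the induction hypothesis does not apply verbatim.  To handle this I would formulate a slightly more general claim applicable to an arbitrary maximal linearly independent subset of $\{\Phi^{u,v} : 0 \leq u+v \leq k-s\}$ for some $s \geq 0$.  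Non-degeneracy of the original $\Phi$, via further applications of Lemma \ref{no one dim space} at successive values of $l$, provides the pair of linearly independent low-degree forms needed to continue the peeling step through each round of the recursion; the hard part will be verifying that enough of this non-degeneracy structure survives each reduction to keep the induction going.
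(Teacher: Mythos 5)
Your first reduction (normalising the two linear forms to $x$ and $y$, specialising $\vx_1=\mathbf{0}$, and Laplace-expanding to drop to an $(N-2)\times(N-2)$ determinant) is correct, and your $N=3$ base case checks out. But the proposal stops exactly where the real difficulty begins, and you say so yourself: the inductive step is not established. The gap is genuine, not merely a matter of bookkeeping. Your peeling step works for the linear forms only because their gradients are \emph{constant}, so a single specialisation point turns the relevant $2\times 2$ block into the identity while killing the rows of higher degree. At the next round the lowest remaining degree is $2$, and the gradients of degree-$2$ forms are linear in the evaluation point: setting $\vx_2=\mathbf{0}$ annihilates the entire pair of columns, and no other single point gives you a block of constants. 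The peeling must therefore be done generically, which forces you to show that the $2\times 2$ Wronskian-type determinant $G^{1,0}H^{0,1}-G^{0,1}H^{1,0}$ of a suitable pair of forms in each degree is not identically zero, and moreover that in the full Laplace expansion along the two columns of $\vx_j$ (which involves \emph{all} $2\times 2$ minors in those columns, not just the one you want) the contribution of your chosen pair cannot be cancelled by the others.

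This is precisely the content of the paper's argument, which you would have to reconstruct to close your induction: the $F_i$ are first replaced by forms $G_i$ in block row-echelon form so that within each degree the highest exponents of $x$ are strictly decreasing; the claim that $W_i = G_{i-1}^{1,0}G_i^{0,1}-G_{i-1}^{0,1}G_i^{1,0}$ is non-zero with an explicitly identified leading monomial is proved by a coefficient computation whose degenerate case is excluded by Lemma \ref{no one dim space}; and the induction on the bottom-left corner determinants $D_r$ then isolates a monomial occurring in exactly one term of the expansion. Without the normalisation of the $G_i$ and the leading-monomial comparison, "enough non-degeneracy survives each reduction" is an assertion, not a proof — and it is the assertion the lemma is really about, since for degenerate $\Phi$ the determinant genuinely does vanish identically.
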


\begin{proof}
For each $1 \leq l \leq k$, let $I(l)$ denote the set of indices $i$ for which $k_i := \deg F_i = l$.  For each $i \in I(l)$ there exists $\vc_{li} \in \Z^{l+1}$ such that $F_i(x, y) = \vc_{li} \cdot (x^l, x^{l-1}y, \dots, y^l)$.  Let $C_l$ denote the matrix whose rows comprise $\vc_{li}$ ($i \in I(l)$).  Since the $F_i$ are linearly independent, $C_l$ has full-rank.  Hence there exists an invertible matrix $B_l$ such that $B_l C_l$ is a full-rank matrix in reduced row-echelon form.  Define the  rational homogeneous polynomials $G_1, \dots, G_N$ by 
$$
\begin{pmatrix} G_1\\ \vdots \\ G_N \end{pmatrix} = \begin{pmatrix} B_k & \ & \ & \\ \ & B_{k-1} & \ & \ \\ \ & \ &  \ddots \ & \\ \ & \ & \ &\  B_1 \end{pmatrix}  \cdot\begin{pmatrix} F_1\\ \vdots  \\ F_N \end{pmatrix}.
$$ 
From our construction, we see that $\deg G_i = \deg F_i$ for all $i$.  Furthermore, if $d_i$ denotes the highest exponent of $x$ occurring in $G_i(x,y)$, then for any $i, j \in I(l)$ with $i < j$ we have $d_i > d_j$.  Write $\widetilde{\Jac}(\vx_1, \dots, \vx_M)$ for the $N\times 2M$ matrix 
\begin{equation}
\Bigbrac{ G_i^{1,0}(\vx_j),\ G_i^{0,1}(\vx_j)}_{\substack{ 1\leq i \leq N\\ 1 \leq j \leq M}},
\end{equation}
and let $\tilde{\Delta}(\vx_1, \dots, \vx_M)$ denote the determinant of its first $N$ columns.  By linearity of differentiation, we have
$$
\widetilde{\Jac}(\vx_1, \dots, \vx_M) = \begin{pmatrix} B_k & \ & \ & \\ \ & B_{k-1} & \ & \ \\ \ & \ &  \ddots \ & \\ \ & \ & \ &\  B_1 \end{pmatrix}  \cdot \Jac(\vx_1, \dots, \vx_M).
$$
Since the matrix with the $B_i$ along the diagonal is non-singular, it suffices to prove that $\tilde{\Delta}(\vx_1, \dots, \vx_M)$ is not the zero polynomial.  

For $r$ in the range $1 \leq r \leq \frac{N}{2}$, define $D_r(\vx_1, \dots, \vx_r)$ to be the determinant of the $2r\times 2r$ matrix occurring in the bottom-left corner of $\widetilde{\Jac}(\vx_1, \dots, \vx_M)$.  We induct on $r$ to show $D_r$ is not the zero polynomial.  When $N = 2M$ this completes the proof, since in this case $D_M = \tilde{\Delta}$.  When $N + 1= 2M$ we expand along the $N$th column of the matrix associated to $\tilde{\Delta}$ to obtain
\begin{equation}\label{odd case one column}
\tilde{\Delta}  =  D_{M-1}(\vx_1, \dots, \vx_{M-1})G_1^{1,0}(\vx_M) + \sum_{i=2}^N P_i(\vx_1, \dots, \vx_{M-1})G_i^{1,0}(\vx_M),
\end{equation}
for some polynomials $P_2, \dots, P_N$.  Since $F_1$ is the only form of degree $k_1 = k$ and is non-degenerate, $G_1(x,y)$ does not take the form $ cy^{k_1}$.  It follows that $G_1^{1,0}(x,y)$ is a non-zero polynomial of degree $k -1$, a degree higher than that of any other $G_i^{1,0}(x,y)$.  Since $D_{M-1}(\vx_1, \dots, \vx_{M-1})$ is also a non-zero polynomial, we can use \eqref{odd case one column} to compare the exponents of the monomials in $\tilde{\Delta}$ which feature $\vx_M$, and thereby deduce that $\tilde{\Delta}$ cannot be zero.

It remains to show that $D_r$ is non-zero for each $1\leq r \leq N/2$.  We begin with a claim.
\begin{claim}\label{wronskian claim}
For $2 \leq i \leq N$ the polynomial
\begin{equation}
W_i(x,y) = \begin{vmatrix}  G_{i-1}^{1,0} & G_{i-1}^{0,1} \\ G_i^{1,0} & G_i^{0,1} \end{vmatrix}  = G_{i-1}^{1,0}\ G_i^{0,1} - G_{i-1}^{0,1}\ G_i^{1,0}
\end{equation}
is non-zero, of degree $k_{i-1} + k_i -2$ and with highest exponent of $x$ equal to $d_{i-1} + d_i -1$.
\end{claim}  
Recalling that $d_i$ denotes the highest exponent of $x$ occurring in $G_i(x,y)$, consider the polynomial
\begin{align*}
 \begin{vmatrix}  d_{i-1} x^{d_{i-1} -1}y^{k_{i-1} - d_{i-1}} & (k_{i-1}- d_{i-1})  x^{d_{i-1} }y^{k_{i-1} - d_{i-1}-1} \\ d_{i} x^{d_i -1}y^{k_{i} - d_{i}} & (k_{i}- d_{i})  x^{d_i }y^{k_{i} - d_{i}-1} \end{vmatrix}& \\
 =  \bigbrac{d_{i-1}(k_i - d_i) - d_i(k_{i-1} - d_{i-1})}&x^{d_{i-1} + d_i -1}  y^{k_{i-1} + k_i - d_{i-1} - d_i -1} .
\end{align*}
If this is non-zero then, by our construction of the $G_j$, it has the same leading monomial and coefficient as $W_i$ (when we order monomials according to the lexicographical\footnote{So $(a_1, \dots, a_n) \prec (b_1, \dots, b_n)$ if there exists $i$ with $a_i < b_i$ and $a_j = b_j$ for all $j  < i$.}ordering on their exponents).  To establish the claim it therefore remains to show that
\begin{equation}\label{claim 1 follows if}
 \bigbrac{d_{i-1}(k_i - d_i) - d_i(k_{i-1} - d_{i-1})} \neq 0.
\end{equation}
Suppose otherwise.  Then \begin{equation}\label{di ki relation} d_{i-1} k_i = d_i k_{i-1}.\end{equation}  There are two cases to consider.  In the first case $k_i = k_{i-1}$, from which it follows that $d_{i-1} = d_i$.  However, this contradicts our construction of the $G_j$.  The only other possibility is that $k_{i-1} = k_i +1$.  In this case $k_{i-1}$ and $k_i$ are co-prime, so we must have $d_{i-1} = k_{i-1}$ and $d_i = k_i$.  Our construction of the $G_j$ therefore ensures that $G_{i-1}$ is the only $G_j$ of degree $k_{i-1}$, since it has the highest index of any $G_j$ of degree $k_{i-1}$, but also has highest exponent of $x$ equal to $k_{i-1}$.  This forces $\Phi$ to be degenerate, by Lemma \ref{no one dim space}, a contradiction which establishes the claim.

Notice that $D_1(\vx_1) = W_N(\vx_1)$, so $D_1$ is a non-zero polynomial by the claim, giving us the basis case of our induction.  Let us suppose that $D_{r-1}$ is non-zero, with $ r \leq N/2$.  Inspection reveals that $D_{r}$ is equal to 
\begin{equation}\label{inductive D identity}
\sum_{N-2r < i<j \leq N}  P_{ij}(\vx_1, \dots, \vx_{r-1}) \bigbrac{G_i^{1,0}(\vx_r)G_j^{0,1}(\vx_r) - G_i^{0,1}(\vx_r) G_j^{0,1}(\vx_r)},
\end{equation}
where the $P_{ij}$ are polynomials with $P_{ij} = D_{r-1}$ when $$\set{i,j} = \set{N -2r+2, N - 2r + 1}.$$  Let $W_{ij}$ denote the polynomial $G_i^{1,0}G_j^{0,1} - G_i^{0,1} G_j^{0,1}$.  The degree of the $W_{ij}$ occurring in \eqref{inductive D identity} is maximised only when $k_i = k_{N - 2r+1}$ and $k_j = k_{N-2r +2}$.  In this case, the highest exponent of $x$ occurring in $W_{ij}$ is strictly less than $d_{N-2r+1} + d_{N-2r+2} - 1$, unless $i = N-2r+1$ and $j = N-2r+2$, in which case $W_{ij} = W_{N-2r +2}$.  It follows from the claim and the induction hypothesis that the term
$$
P_{N-2r+1, N- 2r+2}(\vx_1, \dots, \vx_{r-1})W_{N-2r +2}(\vx_r)
$$
has a monomial occurring in no other term of the sum \eqref{inductive D identity}, hence $D_r$ is itself non-zero.  The lemma now follows. 
\end{proof}

The $p$-adic iterative method yields a congruence relation amongst the variables of equation \eqref{dependent vino system}.  In order to use this relation to provide an iterative bound on $J_{s, \Phi}(X)$, we need to count the number of solutions to such a congruence.  This is the purpose of the next lemma. 

\begin{definition}
Given $\vsigma \in \set{-1, 1}^M$, $\vm \in \Z^N$, $\vxi \in \Z^2$ and a prime $p$,  define $\mathcal{B}^{\vsigma}_p(\vm; \vxi)$ to be the set of solutions $(\vx_1, \dots, \vx_M)$ modulo $p^k$ of the system of equations
\begin{equation}\label{congruence condition equation}
\sum_{j =1}^M \sigma_j F_i(\vx_j - \vxi) \equiv m_i \pmod{ p^{k_i}} \quad (1 \leq i \leq N)
\end{equation}
satisfying the additional condition that $\Delta(\vx_1, \dots, \vx_M) \not \equiv 0 \bmod p$.
\end{definition}

\begin{lemma}\label{auxilliary congruence count lemma}  We have the upper bound
\begin{equation}\label{auxilliary congruence count}
|\mathcal{B}^{\vsigma}_{p}(\vm;\vxi)| \leq k_1 \dotsm k_N\ p^{2Mk - K}.
\end{equation}
\end{lemma}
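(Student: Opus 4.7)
The strategy is a two-scale argument, first counting mod-$p$ solutions and then mod-$p^k$ lifts of each. Translating $\vx_j\mapsto\vx_j-\vxi$ is a bijection of $(\Z/p^k\Z)^{2M}$, so we may assume $\vxi=\mathbf{0}$. Relabel the $2M$ scalar coordinates of $(\vx_1,\ldots,\vx_M)$ so that the first $N$ of them, collected in $\vz\in(\Z/p^k\Z)^N$, are exactly the variables with respect to which $\Delta$ is the Jacobian determinant; denote the remaining $2M-N$ coordinates by $\vw$. Writing $\Psi_i(\vz,\vw):=\sum_j\sigma_j F_i(\vx_j)$, the matrix $(\partial\Psi_i/\partial z_j)_{i,j}$ agrees up to column signs of $\pm 1$ with the first $N$ columns of $\Jac(\vx_1,\ldots,\vx_M)$, so its determinant equals $\pm\Delta$, and the hypothesis $\Delta\not\equiv 0\pmod p$ is exactly the statement that this $N\times N$ Jacobian is invertible over $\F_p$. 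As there are $p^{(2M-N)k}$ choices of $\vw\bmod p^k$, it suffices to show that for each fixed $\vw$ the number of $\vz\in(\Z/p^k\Z)^N$ satisfying $\Psi_i(\vz,\vw)\equiv m_i\pmod{p^{k_i}}$ for every $i$ together with $\Delta(\vz,\vw)\not\equiv 0\pmod p$ is at most $k_1\cdots k_N\,p^{kN-K}$.

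With $\vw$ fixed, stratify by $\vz^{(0)}:=\vz\bmod p$. Only residues $\vz^{(0)}$ with $\Psi_i(\vz^{(0)},\vw)\equiv m_i\pmod p$ for all $i$ and $\Delta(\vz^{(0)},\vw)\not\equiv 0\pmod p$ contribute. Regarded as a polynomial in $\vz$ alone, $\Psi_i$ has degree at most $k_i$, so counting the admissible residues amounts to counting common zeros in $\F_p^N$ of $N$ affine hypersurfaces of degrees $k_1,\ldots,k_N$, restricted to those points where their Jacobian is non-singular. Each such zero is isolated and meets the other hypersurfaces transversally, hence contributes intersection multiplicity one in the projective closure, so Bezout's theorem bounds the count of admissible residues by $k_1\cdots k_N$.

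For the third step I would fix one admissible $\vz^{(0)}$ and estimate the lifts $\vz\bmod p^k$ by an inductive Hensel argument. Set
\[
A_l:=\set{\vz\bmod p^l:\vz\equiv\vz^{(0)}\bmod p\text{ and }\Psi_i(\vz,\vw)\equiv m_i\bmod p^{\min(k_i,l)}\ \forall i},
\]
so that $|A_1|=1$ and we must bound $|A_k|$. Writing a candidate element of $A_{l+1}$ in the form $\vz+p^l\vv$ with $\vz\in A_l$ and $\vv\in(\Z/p\Z)^N$, the Taylor expansion
\[
\Psi_i(\vz+p^l\vv)=\Psi_i(\vz)+p^l\nabla_{\vz}\Psi_i(\vz)\cdot\vv+\sum_{|\alpha|\ge 2}\trecip{\alpha!}\partial^\alpha\Psi_i(\vz)\,p^{l|\alpha|}\vv^\alpha
\]
shows that, after dividing through by $p^l$, the congruences newly imposed in the transition from $A_l$ to $A_{l+1}$ reduce to linear equations over $\F_p$ of the form $J_i(\vz^{(0)},\vw)\cdot\vv\equiv b_i\pmod p$, one for each index $i$ with $k_i\ge l+1$. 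Writing $n_l:=\bigabs{\set{i:k_i\ge l}}$, the submatrix of the full $N\times N$ Jacobian indexed by the rows $\set{i:k_i\ge l+1}$ has rank precisely $n_{l+1}$, being a submatrix of an invertible matrix, so each $\vz\in A_l$ extends in at most $p^{N-n_{l+1}}$ ways. Telescoping together with the identity $\sum_{l=2}^{k}n_l=\sum_i(k_i-1)=K-N$ yields $|A_k|\le p^{(k-1)N-(K-N)}=p^{kN-K}$. Multiplying by the at most $k_1\cdots k_N$ admissible residues $\vz^{(0)}$ and the $p^{(2M-N)k}$ choices of $\vw$ produces the claimed inequality.

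The main technical obstacle lies in the second step: rigorously converting pointwise non-vanishing of the Jacobian into intersection multiplicity one at each counted zero, so that projective Bezout applies without over-counting positive-dimensional components or contributions at infinity. A more elementary alternative is an inductive resultant elimination that bypasses intersection theory altogether, but the projective approach is by far the cleanest.
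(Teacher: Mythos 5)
Your argument is correct, but it handles the heart of the count quite differently from the paper. Both proofs split off the $2M-N$ redundant coordinates for a factor $p^{(2M-N)k}$ and reduce to an $N$-equation, $N$-variable system whose Jacobian determinant is $\pm\Delta$. The paper then \emph{inflates} the mixed moduli: it bounds $|\mathcal{B}^{\vsigma}_p|$ by $p^{kN-K}$ times the maximal number of non-singular solutions of the system taken to the full modulus $p^k$, and quotes Theorem 1 of Wooley (1996) to bound the latter by $k_1\dotsm k_N$ in one stroke. You instead keep the moduli $p^{k_i}$ as they are, count the admissible residues mod $p$ by Bezout, and recover the factor $p^{kN-K}$ from an explicit Hensel lift: the rank-$n_{l+1}$ linear systems at each level and the identity $\sum_{l\ge 2}n_l=K-N$ do telescope exactly as you claim (your Taylor step is sound since $2l\ge l+1$ and the relevant gradient rows are rows of an invertible matrix over $\F_p$). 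Your route is self-contained apart from the mod-$p$ Bezout count and makes transparent where the exponent $kN-K$ comes from; its cost is precisely the "technical obstacle" you flag, namely that non-singular points are isolated of multiplicity one so that refined Bezout bounds their number by $\prod k_i$ — which is exactly the content of the result the paper cites (and you could equally cite it, at modulus $p$, to close that step). Two small points to tidy: your opening reduction to $\vxi=\mathbf{0}$ silently uses that $\Delta$ is translation-invariant, which follows from \eqref{matrix translation invariance} since $\Xi_{\vxi}$ is unitriangular of determinant one (the paper carries out this verification); and in the Bezout step you should note that if some $\Psi_i-m_i$ degenerates (vanishes identically or is a non-zero constant mod $p$) there are no non-singular solutions at all, so the bound holds vacuously.
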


In order to prove Lemma \ref{auxilliary congruence count lemma}, we record a simple generalisation of Lagrange's theorem on the number of roots of a non-zero polynomial over an arbitrary field, a result which proves useful elsewhere.  

\begin{lemma}\label{generalised lagrange}
Let $\F$ be a field and $P \in \F[X_1, \dots, X_m]$ a non-zero polynomial.  Let $a_i$ denote the highest exponent of $X_i$ occurring in $P$.  If $A \subset \F$ is finite then
$$
\hash \set{\vx \in A^m : P(\vx) = 0} \leq (a_1+ \dots + a_m) |A|^{m-1}.
$$
\end{lemma}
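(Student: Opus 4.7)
The plan is to prove this by induction on the number of variables $m$, using the standard univariate Lagrange theorem as the base case and splitting the count according to whether a leading coefficient vanishes in the inductive step.

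For the base case $m = 1$, the polynomial $P$ is a non-zero univariate polynomial of degree $a_1$, which has at most $a_1$ roots in any field, so the bound $\hash\set{x \in A : P(x) = 0} \leq a_1$ is immediate (and stronger than needed).

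For the inductive step, assume the result for polynomials in $m-1$ variables. Write
\begin{equation}
P(X_1, \dots, X_m) = \sum_{j = 0}^{a_m} Q_j(X_1, \dots, X_{m-1}) X_m^j,
\end{equation}
where $Q_{a_m}$ is a non-zero polynomial in $m - 1$ variables, and the highest exponent of $X_i$ in $Q_{a_m}$ is at most $a_i$ for each $i < m$. For a fixed $\vx' = (x_1, \dots, x_{m-1}) \in A^{m-1}$, I consider two cases: if $Q_{a_m}(\vx') \neq 0$, then $P(\vx', X_m)$ is a univariate polynomial of degree exactly $a_m$, so it vanishes at at most $a_m$ values of $X_m \in A$; if $Q_{a_m}(\vx') = 0$, I bound the number of $x_m \in A$ trivially by $|A|$. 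The induction hypothesis applied to $Q_{a_m}$ gives at most $(a_1 + \dots + a_{m-1}) |A|^{m-2}$ choices of $\vx'$ falling into the second case.

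Combining these bounds, the total number of zeros of $P$ in $A^m$ is at most
\begin{equation}
|A|^{m-1} \cdot a_m + (a_1 + \dots + a_{m-1})|A|^{m-2} \cdot |A| = (a_1 + \dots + a_m) |A|^{m-1},
\end{equation}
completing the induction. The argument is entirely routine; the only subtlety is to ensure that $Q_{a_m}$ is non-zero (which follows from the definition of $a_m$ as the highest exponent of $X_m$ appearing in $P$) and that the partial-degree bounds on $Q_{a_m}$ inherit from those of $P$, so the induction hypothesis may be invoked with the stated constants.
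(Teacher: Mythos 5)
Your proof is correct, and it carries out precisely the induction on the number of variables $m$ that the paper indicates (the paper itself leaves this as an exercise). The splitting according to whether the leading coefficient $Q_{a_m}$ vanishes, the trivial bound $|A|$ in the degenerate case, and the observation that the partial degrees of $Q_{a_m}$ are at most those of $P$ are exactly the standard details needed, so nothing is missing.
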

The proof is a simple induction on the number of variables $m$, which we leave as an exercise for the reader.

\begin{proof}[Proof of Lemma \ref{auxilliary congruence count lemma}]

Let $\mathcal{D}(\vn)$ denote the number of elements $(\vx_1, \dots, \vx_M)$ in the set $ \mathcal{B}^{\vsigma}_p(\vm; \vxi)$ satisfying the stronger congruence
$$
\sum_{j=1}^M  \sigma_j \vF(\vx_j- \vxi ) \equiv \vn \pmod{p^{k}}.
$$
Then
\begin{equation}\label{B to D1}
\begin{split}
|\mathcal{B}^{\vsigma}_p(\vm; \vxi)| & \leq \twosum{1 \leq n_1 \leq p^{k}}{n_1 \equiv m_1 \bmod p^{k_1}}\dots \twosum{1 \leq n_N \leq p^{k}}{n_N \equiv m_N \bmod p^{k_N}} \mathcal{D}(\vn)\\
& \leq p^{(kN- K)} \max_{\vn} \mathcal{D}(\vn).
\end{split}
\end{equation}

For each tuple $(x_1, x_2, \dots, x_{2M-1}, x_{2M})$ counted by $\mathcal{D}(\vn)$ there are at most $p^{(2M-N)k}$ choices for $x_{i}$ with $i > N$.  Fix such a choice, and define the polynomials
$$
f_i(x_{1}, \dots, x_{N}) = \sum_{j=1}^M \sigma_jF_i(x_{2j-1}- \xi_1, x_{2j}- \xi_2) - n_i\quad (1 \leq i \leq N).
$$
Then $f_1, \dots, f_N$ are polynomials in $\Z[x_1, \dots, x_N]$ with $\deg f_i = k_i$.  By Theorem 1 of Wooley \cite{wooley96}, the number of integer tuples $1 \leq (x_1, \dots, x_N) \leq p^k$ satisfying both
$$
f_i(x_1, \dots, x_N) \equiv 0 \mod p^k \qquad (1\leq i \leq N)
$$
and
$$
\det\Bigbrac{ \frac{\partial f_i}{\partial x_j} (\vx)}_{i, j} \not \equiv 0 \mod p
$$
is at most $(\deg f_1) \dotsm (\deg f_N) = k_1 \dotsm k_N$.  One can check using \eqref{matrix translation invariance}, that for $(u,v) = (1,0)$ or $(0,1)$, we have
$$
F_i^{u,v} (\vx_j - \vxi) = F_i^{u,v}(\vx_j) + \sum_{l > i} (\Xi_{-\vxi})_{il} F_l^{u,v}(\vx_j).
$$ Hence it follows that
$$
\Bigabs{\det\Bigbrac{ \frac{\partial f_i}{\partial x_j} (x_1, \dots, x_N)}_{i, j} }= \bigabs{\Delta(x_1, x_2, \dots, x_{2M-1}, x_{2M})}.
$$
So there are at most $k_1 \dotsm k_N$ choices for $(x_1, \dots, x_N)$ with $(x_1, \dots, x_{2M})$ counted by $\mathcal{D}(\vn)$.  Thus
\begin{equation}\label{final D2}
\mathcal{D}(\vn) \leq k_1\dotsm k_N p^{(2M-N)k}.
\end{equation}
Putting \eqref{B to D1} and \eqref{final D2} together, we obtain the lemma.
\end{proof}

Lemma \ref{auxilliary congruence count lemma} allows us to count non-singular solutions, which we have still to define.  The remaining singular solutions are counted by the following lemma.  First a definition.
\begin{definition}
Define $\mathcal{S}_{t}(X)$ to be the set of 
$$
(\vx_1, \dots, \vx_t) \in [X]^{2t}
$$
such that for any function $h : [M] \to [t]$ we have the identity
$$
\Delta(\vx_{h(1)},\dots, \vx_{h(M)}) = 0.
$$
\end{definition}

\begin{lemma}\label{highly singular lemma}
Let $\Phi$ be a non-degenerate binary form of degree $k$ and differential dimension $N$.  Setting $M = \ceil{N/2}$, we have the upper bound
\begin{equation}\label{highly singular bound}
|\mathcal{S}_{ t}(X)| \leq Mt^M (2k)^t X^{t+M -1}.
\end{equation}
\end{lemma}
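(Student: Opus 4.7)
The plan is to prove the bound by induction on $t$, combining Lemma \ref{non-zero jacobian} with Lemma \ref{generalised lagrange}. The base case is $t = M$: by Lemma \ref{non-zero jacobian}, $\Delta \in \Z[\vx_1, \dots, \vx_M]$ is a non-zero polynomial, and since each $\vx_j$ appears in exactly two columns of the Jacobian whose entries are homogeneous of degree $k_i - 1 \leq k - 1$, the polynomial $\Delta$ has degree at most $2(k-1) < 2k$ in each of its $2M$ scalar components.  Lemma \ref{generalised lagrange} therefore yields $|\mathcal{S}_M(X)| \leq 4kM \cdot X^{2M-1}$, which lies within the claimed bound $M \cdot M^M (2k)^M X^{2M-1}$.

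For the inductive step with $t > M$, given $(\vx_1, \dots, \vx_t) \in \mathcal{S}_t(X)$, the idea is to identify a ``witness'' $(l, i_1, \dots, i_{M-1}) \in [M] \times [t]^{M-1}$ for which the polynomial
\begin{equation*}
P(\vy) = \Delta(\vx_{i_1}, \dots, \vx_{i_{l-1}}, \vy, \vx_{i_l}, \dots, \vx_{i_{M-1}})
\end{equation*}
is not the zero polynomial in $\vy$.  When such a witness exists, the $\mathcal{S}_t$-condition forces $P(\vx_j) = 0$ for every $j \in [t]$, so each $\vx_j$ lies in a subset of $[X]^2$ of cardinality at most $4kX$ by Lemma \ref{generalised lagrange}.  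Summing over the $Mt^{M-1}$ possible witnesses, allowing the $M-1$ ``seed'' variables to range freely in $[X]^2$, gives a generic-case bound of the shape
\begin{equation*}
Mt^{M-1} \cdot X^{2(M-1)} \cdot (4kX)^{t-M+1},
\end{equation*}
which, after absorbing the numerical constants into the combinatorial prefactor $Mt^M$ and exploiting the homogeneity of each Wronskian arising in the Laplace expansion of $\Delta$, matches the stated bound.

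For the ``degenerate'' tuples---those for which no such witness exists---every polynomial $P$ above vanishes identically in $\vy$.  Each coefficient of $P$ as a polynomial in $\vy$ is then a polynomial identity in $(\vx_{i_1}, \dots, \vx_{i_{M-1}})$; but by the non-vanishing of $\Delta$ (Lemma \ref{non-zero jacobian}) and its decomposition along the two $\vy$-columns into a linear combination of Wronskians with minor coefficients, at least one such coefficient is itself a non-zero polynomial.  Applying Lemma \ref{generalised lagrange} to this auxiliary non-zero polynomial produces a non-trivial algebraic condition on the tuple, which combined with the inductive hypothesis at a smaller parameter controls the number of degenerate tuples.

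The main obstacle is handling the degenerate case cleanly: tracking the degrees of the coefficient polynomials that emerge from the Wronskian decomposition, and then matching the output of Lemma \ref{generalised lagrange} against the target bound $Mt^M (2k)^t X^{t+M-1}$, requires delicate bookkeeping.  In particular, replacing the naive per-variable bound of $4kX$ by an effective $2kX$ uses the homogeneity of each $W_{a,b}(\vy)$ of degree $k_a + k_b - 2$ occurring in the Laplace expansion, which is precisely where the non-degeneracy hypothesis on $\Phi$ (via Lemma \ref{non-zero jacobian}) plays its decisive role.
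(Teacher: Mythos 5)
Your generic case (a witness exists) is sound and is essentially the paper's own treatment of that case: once some assignment of $M-1$ points of the tuple to all but one argument of $\Delta$ leaves a non-zero polynomial in the remaining two scalar variables, Lemma \ref{generalised lagrange} confines each of the other $t-M+1$ points to a set of size $O(kX)$, and the resulting count $Mt^{M-1}X^{2(M-1)}(4kX)^{t-M+1}$ has the right order $X^{t+M-1}$. The problem is the degenerate case, which you flag but do not close, and which is the entire content of the lemma. Your framing as an induction on $t$ cannot absorb these tuples: all the inductive hypothesis tells you about a degenerate tuple is that its first $t-1$ coordinates lie in $\mathcal{S}_{t-1}(X)$, and paying the trivial factor $X^2$ for the last point gives $X^{t+M}$, one power of $X$ too many. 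What is needed is that \emph{every} point of the tuple be algebraically constrained, and an induction on the length of the tuple does not deliver that.

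Your remark that some coefficient of $P(\vy)$ is a non-zero polynomial in the seed variables is correct, but it is only the first step of a descent that you never complete: if the tuple also annihilates every substitution into that coefficient polynomial, you must pass to a non-zero coefficient of \emph{it}, and so on. The paper makes this descent explicit: it builds the chain $\Delta_M=\Delta, \Delta_{M-1},\dots,\Delta_0=1$ of leading-coefficient polynomials via \eqref{recursive polynomials} (extracting the lexicographically maximal monomial in the last vector variable), partitions $\mathcal{S}_t(X)$ into classes $\mathcal{T}_i$ according to the first level at which some substitution survives, and observes that the descent terminates because $\Delta_0=1$ cannot vanish. At level $i$ one fixes only $i-1$ seeds and constrains $t-i+1$ points, giving $t^{i-1}X^{2(i-1)}(2kX)^{t-i+1}\leq t^M(2k)^tX^{t+i-1}$, and summing over $i\leq M$ yields the stated bound. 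Until you identify the correct decreasing parameter (the number of vector arguments of the auxiliary polynomial, not $t$), exhibit the non-zero polynomial at each level, and verify termination, the degenerate case — and hence the lemma — remains unproved.
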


\begin{proof}
The result follows trivially if $t < M$, so we may assume that $t \geq M$.  Let us define a sequence of non-zero polynomials $\Delta_i(\vx_1, \dots, \vx_i)$ for $i = 0,1, \dots, M$.  We begin by setting $\Delta_M = \Delta$.  Suppose we have constructed $\Delta_i$ with $i >1$.  Let us write $\vx^{\va}$ for the monomial $x_1^{a_1}
x_2^{a_2}$.  Of the monomials $\vx_1^{\va_1} \dotsm \vx_i^{\va_i}$ occurring in $\Delta_i$, let $\vb_i$ denote the maximum in the lexicographical ordering over all $\va_i$.  It follows that there exist polynomials $\Delta_{i-1}(\vx_1, \dots, \vx_{i-1})$ and $R_i(\vx_1, \dots, \vx_i)$ such that
\begin{equation}\label{recursive polynomials}
\Delta_i(\vx_1, \dots, \vx_i) = \Delta_{i-1}(\vx_1, \dots, \vx_{i-1})\vx_i^{\vb_i} + R_i(\vx_1, \dots, \vx_i).
\end{equation}
Moreover, we may assume $\Delta_{i-1}$ is non-zero and that every monomial $\vx_1^{\va_1}\dotsm\vx_i^{\va_i}$ occurring in $R_i$ satisfies $\va_i \prec \vb_i$, where $\prec$ denotes the (strict) lexicographical ordering.  For consistency, let us set $\Delta_0 = 1$ and $R_1 = 0$.  For each $i$ in the range $1\leq i \leq M$, define $\mathcal{T}_i$ to be the set of $(\vx_1, \dots, \vx_t) \in [X]^{2t}$ satisfying both of the following conditions:
\begin{enumerate}[(i)]
\item For any $h : [i] \to [t]$ we have
$$
\Delta_i(\vx_{h(1)}, \dots, \vx_{h(i)}) = 0,
$$

\item For each $j < i$ there exists $h : [j] \to [t]$ such that
$$
\Delta_j(\vx_{h(1)}, \dots, \vx_{h(j)}) \neq 0.
$$
\end{enumerate}
Then we have that 
$$
\mathcal{S}_{t}(X) \subset \bigcup_{1 \leq i \leq M} \mathcal{T}_i.
$$

Let $(\vx_1, \dots, \vx_M) \in \mathcal{T}_i$.  Then there exists some $h: [i-1] \to [t]$ such that 
$$
\Delta_{i-1}(\vx_{h(1)}, \dots, \vx_{h(i-1)}) \neq 0,
$$ 
yet for all $j \notin \set{h(1), \dots, h(i-1)}$, the identity \eqref{recursive polynomials} tells us that
\begin{equation}\label{recursive zeroes}
0  = \Delta_{i-1}(\vx_{h(1)}, \dots, \vx_{h(i-1)})\vx_j^{\vb_j} + R_j(\vx_{h(1)}, \dots, \vx_{h(i-1)}, \vx_j).
\end{equation}
Since the two-variable polynomial
$$
Q(\vX) = \Delta_{i-1}(\vx_{h(1)}, \dots, \vx_{h(i-1)})\vX^{\vb_j} + R_j(\vx_{h(1)}, \dots, \vx_{h(i-1)}, \vX)
$$
is non-zero, it follows from Lemma \ref{generalised lagrange} that for each $j$, the number of $\vx_j$ satisfying \eqref{recursive zeroes} is at most
$$
(b_{j1} + b_{j2})X \leq 2k X.
$$
There are trivially at most $X^{2(i-1)}$ choices for $(\vx_{h(1)}, \dots, \vx_{h(i-1)})$, and at most $t^{i-1}$ choices for $h : [i-1] \to [t]$.  Thus
$$
|\mathcal{T}_i| \leq t^{i-1} (2k)^{t-i+1} X^{ 2(i-1) + (t - i +1)} =  t^{M}(2k)^{t}  X^{ t + i -1}.
$$
Hence
$$
|\mathcal{S}_{ t}(X)| \leq Mt^{M}(2k)^t  X^{t+ M -1}.
$$
\end{proof}

We can now implement the results obtained  so far in this section to prove the following lemma, which encodes the basic iterative relation underlying our $p$-adic approach to bounding $J_{s, \Phi}$.  Again, we begin with a definition.

\begin{definition}
Given a prime number $p$, $\vxi \in \Z^2$ and $\vsigma \in \set{-1, 1}^M$, define the exponential sums 
\begin{align*}
\mathfrak{f}_p(\valpha; \vxi) & = \twosum{\vx \in [X]^2}{\vx \equiv \vxi \bmod p} e(\valpha \cdot \vF(\vx)),\\
\mathfrak{F}_p^{\vsigma}(\valpha) & = \twosum{(\vx_1, \dots, \vx_M) \in [X]^{2M}}{\Delta(\vx_1, \dots, \vx_M) \not \equiv 0 \bmod p} e\Bigbrac{ \valpha \cdot \sum_{j =1}^M \sigma_j \vF(\vx_j)}.
\end{align*}
\end{definition}

 \begin{lemma}\label{iteration initiation}
Let $s \geq M$.  Then there exists $\vxi \in \Z^2$, $\vsigma \in \set{-1, 1}^M$ and a prime $p$ in the range $ X^{1/k} < p \leq 2X^{1/k}$ such that 
\begin{equation}\label{basic iterative relation equation}
J_{s,\Phi}(X) \ll X^{2s + M -1} + p^{4(s-M)} \oint |\mathfrak{F}_p^{\vsigma}(\valpha)|^2 |\mathfrak{f}_p(\valpha; \vxi)|^{2(s-M)}\intd\valpha.
\end{equation}
\end{lemma}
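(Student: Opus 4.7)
The plan is to separate $J_{s,\Phi}(X)$ into a highly singular piece and a non-singular remainder, then use Cauchy--Schwarz to duplicate a single factor of $\mathfrak{F}_p^{\vsigma}$ into its squared modulus and close the estimate by AM--GM. Setting $\sigma_j = +1$ for $j \leq s$ and $\sigma_j = -1$ for $s < j \leq 2s$, view $J_{s,\Phi}(X)$ as counting tuples $(\vz_1, \ldots, \vz_{2s}) \in [X]^{4s}$ with $\sum_j \sigma_j \vF(\vz_j) = \mathbf{0}$. The contributing tuples which additionally lie in $\mathcal{S}_{2s}(X)$ number at most $|\mathcal{S}_{2s}(X)| \ll X^{2s+M-1}$ by Lemma \ref{highly singular lemma} with $t = 2s$, accounting for the first term of the claimed bound. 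Let $J^{\sharp}(X)$ denote the remaining non-singular count.

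Every tuple contributing to $J^{\sharp}(X)$ admits some injection $h : [M] \to [2s]$ for which the integer $\Delta(\vz_{h(1)}, \ldots, \vz_{h(M)})$ is non-zero. Because $|\Delta|$ is polynomially bounded in $X$ it has $O(\log X)$ distinct prime divisors, while the Prime Number Theorem furnishes $\gg X^{1/k}/\log X$ primes in $(X^{1/k}, 2X^{1/k}]$. Hence a majority of such primes $p$ satisfy $\Delta \not\equiv 0 \bmod p$ for any given non-singular tuple. Averaging over these primes and pigeonholing over the finite set of injections $h$ fixes a triple $(p, h, \vsigma)$, with $\vsigma := (\sigma_{h(1)}, \ldots, \sigma_{h(M)}) \in \set{-1, 1}^M$, such that
\[
J^{\sharp}(X) \ll N_h(p) := \#\set{\vz \in [X]^{4s} : \textstyle\sum_j \sigma_j \vF(\vz_j) = \mathbf{0}, \ \Delta(\vz_{h(1)}, \ldots, \vz_{h(M)}) \not\equiv 0 \bmod p}.
\]
Orthogonality on the $N$-torus then expresses $N_h(p)$ as $\oint \mathfrak{F}_p^{\vsigma}(\valpha) \, f(\valpha)^A \overline{f(\valpha)}^B \intd\valpha$, where $A$ and $B$ count the $\vx$- and $\vy$-side indices outside $h([M])$ and so satisfy $A+B = 2s - M$.

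The critical step is a Cauchy--Schwarz splitting chosen to duplicate the solitary $\mathfrak{F}_p^{\vsigma}$: since $A, B \geq 0$ and $s \geq M$, the integrand factorises as $\phi_1 \phi_2$ with $|\phi_1|^2 = |\mathfrak{F}_p^{\vsigma}|^2 |f|^{2(s-M)}$ and $|\phi_2|^2 = |f|^{2s}$ (for instance, $\phi_1 = \mathfrak{F}_p^{\vsigma} \overline{f}^{s-M}$ and $\phi_2 = f^A \overline{f}^{B-(s-M)}$, noting $B \geq s-M$). Cauchy--Schwarz then delivers
\[
N_h(p) \leq I^{1/2} \cdot J_{s,\Phi}(X)^{1/2}, \qquad I := \oint |\mathfrak{F}_p^{\vsigma}(\valpha)|^2 |f(\valpha)|^{2(s-M)} \intd\valpha.
\]
Combining with the singular estimate yields $J_{s,\Phi}(X) \ll X^{2s+M-1} + (I \cdot J_{s,\Phi}(X))^{1/2}$, and AM--GM rearranges this to $J_{s,\Phi}(X) \ll X^{2s+M-1} + I$.

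It remains to replace $|f|^{2(s-M)}$ by $|\mathfrak{f}_p(\,\cdot\,; \vxi)|^{2(s-M)}$ for a suitable $\vxi$. Decomposing $f(\valpha) = \sum_\vxi \mathfrak{f}_p(\valpha; \vxi)$ and applying the triangle inequality followed by H\"older on the $p^2$ residues gives the pointwise estimate
\[
|f(\valpha)|^{2(s-M)} \leq p^{4(s-M) - 2} \sum_\vxi |\mathfrak{f}_p(\valpha; \vxi)|^{2(s-M)};
\]
integrating against $|\mathfrak{F}_p^{\vsigma}|^2$ and pigeonholing over the $p^2$ residues to extract the dominant $\vxi$ produces
\[
I \ll p^{4(s-M)} \oint |\mathfrak{F}_p^{\vsigma}(\valpha)|^2 |\mathfrak{f}_p(\valpha; \vxi)|^{2(s-M)} \intd\valpha
\]
for a particular $\vxi \in \Z^2$, completing the argument. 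The main technical obstacle I anticipate is the choice of Cauchy--Schwarz factorisation: it must simultaneously duplicate $\mathfrak{F}_p^{\vsigma}$ into its squared modulus and leave behind exactly a factor of $|f|^{2s}$, so that the resulting inequality can be closed on itself by AM--GM; the remaining ingredients (the highly singular bound, the Bertrand-style pigeonholing of primes, and the residue-class H\"older step) are more routine.
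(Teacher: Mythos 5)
Your proof is correct and follows essentially the same route as the paper: the split into $\mathcal{S}_{2s}(X)$ plus a non-singular remainder via Lemma \ref{highly singular lemma}, the choice of a prime in $(X^{1/k},2X^{1/k}]$ with $\Delta \not\equiv 0 \bmod p$ after pigeonholing over the injections $h$ (hence $\vsigma$), the Cauchy--Schwarz step pairing $|\mathfrak{F}_p^{\vsigma}|^2|f|^{2(s-M)}$ against $\oint |f|^{2s} = J_{s,\Phi}(X)$, and the residue-class decomposition of $f$ costing $p^{4(s-M)}$. The only cosmetic differences are that the paper picks $p$ from an $O(1)$-sized set of primes whose product exceeds the trivial bound $|\Delta| \leq CX^{K-N}$, rather than your averaging over all $\gg X^{1/k}/\log X$ primes in the range, and it extracts $\vxi$ via a pointwise maximum over residues rather than your H\"older-plus-pigeonhole step; both variants yield the same bound.
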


\begin{proof}
If $(\vx_1, \dots, \vx_M) \in [X]^{2M}$, then each $ij$-entry of the matrix $\Jac(\vx_1, \dots, \vx_M)$, defined in \eqref{Jac defn}, is of order $O(X^{k_i -1})$.  Hence there exists a constant $C = C(\Phi)$ such that for any $(\vx_1, \dots, \vx_N) \in [X]^{2M}$ we have
$$
|\Delta(\vx_1, \dots, \vx_M)| \leq CX^{K-N}.
$$
By the prime number theorem, for all $X \gg_{\Phi} 1$ we have
\begin{equation}\label{PNT bound}
\pi(2X^{1/k}) - \pi(X^{1/k}) \geq \frac{k\log C}{\log X} + k(K-N).
\end{equation}
Let $T$ be the smallest positive integer bounded below by the right-hand side of \eqref{PNT bound}, and let $\mathcal{P}$ denote the set of the $T$ smallest primes in the interval $X^{1/k} < p \leq 2X^{1/k}$.  Then
$$
\prod_{p \in \mathcal{P}} p > X^{T/k} \geq C X^{K-N}.
$$
In particular, for each $(\vx_1, \dots, \vx_M) \in [X]^{2M}$ with $\Delta(\vx_1, \dots, \vx_M) \neq 0$, there must exist $p \in \mathcal{P}$ such that 
$$
\Delta(\vx_1, \dots, \vx_M) \not \equiv 0 \pmod p.
$$

Now $J_{s, \Phi}(X)$ counts tuples $(\vx_1, \dots, \vx_{2s}) \in [X]^{4s}$ which satisfy 
\begin{equation}\label{vF version}
\sum_{i=1}^{s} (\vF(\vx_{2i-1}) - \vF(\vx_{2i})) = 0.
\end{equation}
Let $\mathcal{T}$ denote the number of such tuples which are not contained in $\mathcal{S}_{2s}(X)$.  Then by Lemma \ref{highly singular lemma} we have
$$
J_{s,\Phi}(X) \ll_{s, \Phi} X^{2s + M-1} + \mathcal{T}.
$$
If $(\vx_1, \dots, \vx_{2s})$ denotes a tuple counted by $\mathcal{T}$, then it satisfies \eqref{vF version} and there exists $h : [M] \to [2s]$ such that $\Delta(\vx_{h(1)}, \dots, \vx_{h(M)}) \neq 0$.  Hence for each such choice of tuple and function $h$, there exists a prime $p \in \mathcal{P}$ such that
$$
\Delta(\vx_{h(1)}, \dots, \vx_{h(M)}) \not \equiv 0 \pmod p.
$$
Notice that by the definition of the determinant $\Delta$, such a choice of $h$ must be injective.
Since there are $O(1)$ choices for $h$, and $O(1)$ choices for a prime $p \in \mathcal{P}$, we see that there exists $p$ and $\vsigma \in \set{-1, 1}^M$ such that
\begin{align*}
\mathcal{T} & \ll \oint |\mathfrak{F}_p^{\vsigma}(\valpha)| |f(\valpha)|^{2s -M} \intd \valpha\\
	& \leq \Bigbrac{\oint |\mathfrak{F}_p^{\vsigma}(\valpha)|^2 |f(\valpha)|^{2(s-M)} \intd \valpha}^{1/2} J_{s,\Phi}(X)^{1/2}.
\end{align*}
Thus
\begin{equation}\label{final initiation bound}
J_{s,\Phi}(X)  \ll X^{2s + M -1} + \oint |\mathfrak{F}_p^{\vsigma}(\valpha)|^2 |f(\valpha)|^{2(s-M)} \intd \valpha.
		\end{equation}
By the triangle inequality
$$
|f(\valpha)|^{2(s-M)} =\Bigabs{ \sum_{ \vxi \in [p]^2}\mathfrak{f}_p (\valpha; \vxi)}^{2(s-M)} \leq p^{4(s-M)} \max_{\vxi} |\mathfrak{f}_p (\valpha; \vxi)|^{2(s-M)}.
$$
Incorporating this into \eqref{final initiation bound}, we obtain the lemma.
\end{proof}

The following lemma eventually allows us to conclude that the first term on the right-hand side of \eqref{basic iterative relation equation} is smaller than our hoped for upper bound.

\begin{lemma}\label{Delta inequality corollary}  Let $\Phi$ be a non-degenerate binary form of degree $k$, differential dimension $N$ and differential degree $K$.  Set $M = \ceil{N/2}$.  Then we have the inequality
 \begin{equation}\label{Delta inequality}
 K/k\leq M+\trecip{2}.     
 \end{equation}
 \end{lemma}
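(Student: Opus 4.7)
The plan is to convert the inequality into one about the Hilbert function of the derivatives of $\Phi$. Let $n_l = \dim_\Q \spn\{\Phi^{u,v} : u+v = k-l\}$ denote the multiplicity of degree $l$ among the $F_i$ (we may choose each $F_i$ to be homogeneous, so $n_l$ is well-defined and independent of choice). Then $K = \sum_{l=1}^k l\,n_l$ and $N = \sum_{l=1}^k n_l$, with $n_k = 1$. The target inequality $K/k \leq M + 1/2$ will follow at once from the sharper identity $K/k = (N+1)/2$: if $N$ is even then $M + 1/2 = N/2 + 1/2 = (N+1)/2$, while if $N$ is odd then $M + 1/2 = (N+2)/2 > (N+1)/2$. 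That identity is in turn equivalent, after splitting off the term $l = k$ which contributes $(2k-k)n_k = k$, to
\[
  \sum_{l=1}^{k-1}(2l-k)n_l = 0.
\]

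The heart of the argument is the Hilbert-function symmetry $n_l = n_{k-l}$ for $1 \leq l \leq k-1$, as then the terms at $l$ and $k-l$ in the sum above cancel pairwise. I would establish this symmetry by a bilinear-pairing (catalecticant) argument. Consider the form $B : \Sym^l(\Q^2) \times \Sym^{k-l}(\Q^2) \to \Q$ given by $B(f, g) = (fg)(\partial_x,\partial_y)\Phi$; since $fg$ has degree exactly $k$, this lands in the constants. Rewriting $B(f, g) = g(\partial)\bigl(f(\partial)\Phi\bigr)$ and using the non-degeneracy of the apolar pairing $(g, h) \mapsto g(\partial)h$ on $\Sym^{k-l}$, the left-radical of $B$ is seen to equal the kernel of the linear map $f \mapsto f(\partial)\Phi$ from $\Sym^l$ to $\Sym^{k-l}$, whose image is $\spn\{\Phi^{u,v} : u+v = l\}$ of dimension $n_{k-l}$. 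Hence the left-rank of $B$ is $n_{k-l}$, and symmetrically its right-rank is $n_l$. Since left- and right-ranks of any bilinear form coincide, $n_l = n_{k-l}$.

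The main obstacle will be executing the transpose argument cleanly in a style consistent with the elementary matrix computations already carried out in Lemmas~\ref{no one dim space} and \ref{non-zero jacobian}. A fully concrete alternative is to write $\Phi = \sum_a \alpha_a x^{k-a}y^a$ and check directly that, in the monomial bases, the matrix of $f \mapsto f(\partial)\Phi$ factors as $D_1 C D_2$, where $D_1, D_2$ are invertible diagonal factorial matrices and $C = (c_{i+j})_{0 \leq i \leq l,\,0 \leq j \leq k-l}$ is a Hankel (catalecticant) matrix depending only on the coefficients of $\Phi$. Swapping $l$ and $k-l$ replaces $C$ by $C^T$, and since transposition preserves rank, $n_l = n_{k-l}$. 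This yields $K/k = (N+1)/2$ and hence the bound $K/k \leq M + 1/2$, with equality precisely when $N$ is even.
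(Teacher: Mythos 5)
Your proposal is correct, and it takes a genuinely different route from the paper. The paper never establishes the symmetry $N_l = N_{k-l}$; instead it proves three one-sided combinatorial claims by differentiating a triangularised basis of each graded piece (roughly: $N_{l-1} \geq N_l$ unless $N_l = l+1$; $N_{l-1} \leq N_l + 1$; and hence $N_{k-l} \leq N_l$ for $l \leq k/2$, the last step invoking non-degeneracy through Lemma \ref{no one dim space}), and then deduces the inequality by the same pairing of the terms $l$ and $k-l$ in $\sum_l l N_l$ that you use. Your apolarity/catalecticant argument is sound: the pairing $B(f,g) = (fg)(\partial)\Phi$ does have left-radical equal to $\ker\bigl(f \mapsto f(\partial)\Phi\bigr)$ by non-degeneracy of the apolar pairing in degree $k-l$, and equality of left- and right-ranks (equivalently, that the two catalecticant matrices are transposes up to invertible diagonal factors) gives the exact palindromic symmetry $n_l = n_{k-l}$, hence the identity $K/k = (N+1)/2$ for \emph{every} non-zero binary form, degenerate or not, with the lemma following since $M + \trecip{2} \geq (N+1)/2$. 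This is strictly stronger than what the paper proves and does not use non-degeneracy at all; what the paper's argument buys in exchange is that it stays entirely within the elementary derivative-and-linear-independence manipulations already set up for Lemmas \ref{no one dim space} and \ref{non-zero jacobian}, with no need to introduce the apolar duality. The only blemishes in your write-up are cosmetic: the index ranges on your Hankel matrix $C$ should be $(k-l+1) \times (l+1)$ for the map $\Sym^l \to \Sym^{k-l}$ (a harmless relabelling), and the diagonal factorial factors are most cleanly absorbed by writing $c_a = a!\,(k-a)!\,\alpha_a$, after which the transpose observation is immediate.
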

\begin{proof} We  establish this result in a series of claims.  Let $N_l$ denote the size of the set $\set{i : k_i = l}$.

\textbf{Claim 1.}  \emph{Let $2 \leq l \leq k$.  Then $N_{l-1} \geq N_{l}$ or $N_l = l+1$. }

Let $G_1, \dots, G_{m}$ denote a basis of forms for the space $\spn\set{F_i : k_i = l}$, so that $m = N_l$.  Let $d_j$ denote the degree of the one variable polynomial $G_j(x,1)$.  By performing a linear transformation we may assume that $d_1 > d_2 > \dots > d_m$.

Suppose there exists $i$ for which $d_i < l+1 - i$.  Let $i$ denote the minimal such index.  Then each of the one variable polynomials $G_j^{1,0}(x,1)$ with $1 \leq j < i$ has degree $d_j - 1$, whilst each of the polynomials $G_j^{0,1}(x,1)$ with $j\geq i$ has degree $d_j$.  Since 
\begin{align*} d_1 -1 > d_2 -1 > \dots > d_{i-1} - 1 & = l +1 - i \\ & > d_i > \dots > d_m,\end{align*}
the polynomials $G_1^{1,0}, \dots, G_{i-1}^{1,0}, G_i^{0,1},\dots, G_m^{0,1}$ are a linearly independent subset of the space $\spn\set{F_i : k_i = l-1}$.  It follows that $N_{l-1} \geq N_l$.

Next suppose that for all $i$ we have $d_i = l+i -1$.  If $N_l < l+1$ then, as above, $G_1^{1,0}, \dots, G_m^{1,0}$ form a linearly independent subset of $\spn\set{F_i : k_i = l-1}$ of size $N_l$ and we are done.  The only remaining possibility is that $N_l = l+1$.  This establishes Claim 1.

\textbf{Claim 2.} \emph{For $2 \leq l \leq k$ we have the inequality $$N_{l-1} \leq N_l +1.$$}
  
Let $G_1, \dots, G_{m}$ denote a basis for $\spn\set{F_i : k_i = l}$.  Then for each $u,v$ with $l = k - u- v$, the form $\Phi^{u,v}$ is a linear combination of $G_1, \dots, G_m$.  It follows that each $\Phi^{u+1, v}$ is a linear combination of $G_1^{1,0}, \dots, G_{m}^{1, 0}$.  If $u>0$ then $\Phi^{u, v+1}$ is also a linear combination of the $G_i^{1,0}$, since $(u, v+1) = (u'+1, v')$ for some $u', v' \geq 0$ and $l = d - u' -v'$.  Thus
$$
\spn\set{F_i : k_i = l-1} = \spn\set{G_1^{1, 0}, \dots, G_{m}^{1, 0},\ \Phi^{0, d-l+1}}.
$$
Clearly this latter space has dimension at most $N_l +1$, which is what we require.

\textbf{Claim 3.}  \emph{If $1 \leq l \leq k/2$ then $N_{k-l} \leq N_l$.}

Let $1 \leq l < k/2$.  If $N_i < i+1$ for all $l < i \leq k-l$ then by Claim 1 we have 
$$
N_{k-l} \leq \dots \leq N_{l+1} \leq N_l .
$$
Next suppose $N_i = i+1$ for some $l < i \leq k-l$.  Since $\spn\set{F_j : k_j = i}$ is a subspace of the $(i+1)$-dimensional space
$$
\spn\set{ x^{i-j} y^{j} : 0 \leq j \leq i},
$$
these spaces must in fact coincide.  Taking derivatives, we see that $\spn\set{F_j : k_j = l}$ coincides with $\spn\set{x^{l-j}y^j : 0 \leq j \leq l}$, so $N_l = l+1$. By Claim 2, we have 
$$N_{k - l} \leq N_{k - (l-1)} +1 \leq \dots \leq N_{k} + l \leq 1 +l = N_{l},$$
the last inequality being a consequence of $N_k =1$.  This establishes Claim 3.

Finally, we use Claim 3 to prove the lemma.  Observing that $\trecip{2}+ M \geq (N +1)/2$, the inequality \eqref{Delta inequality} therefore reduces to showing that 
$$
 \sum_{l=1}^{k} l N_l \leq  \tfrac{k}{2} + \sum_{l=1}^{k} \tfrac{k}{2} N_l.
$$
Re-arranging, we need only show
$$
\sum_{{k}/2 < l \leq {k}} \brac{l - \tfrac{{k}}{2}} N_l \leq \tfrac{k}{2}+\sum_{1 \leq l < {k}/2} \brac{\tfrac{{k}}{2} -l} N_l.
$$
Changing variables from $l$ to ${k}-l$ on the left hand side leaves us the task of proving
$$
\tfrac{{k}}{2}N_{{k}} +\sum_{1 \leq l < {k}/2} \brac{\tfrac{{k}}{2} -l } N_{{k}-l} \leq \tfrac{k}{2}+ \sum_{1 \leq l < {k}/2} \brac{\tfrac{{k}}{2} - l} N_l.
$$
This last inequality follows from Lemma \ref{no one dim space} and the fact that $N_{{k}} =1$.  
\end{proof}
%
 
 The final lemma proved before we deduce Theorem \ref{intro VMVT} bounds the second term on the right-hand side of \eqref{basic iterative relation equation}.
 
 \begin{lemma}\label{non-singular iteration lemma}
Suppose that $s \geq M$ and $X^{1/k} \leq p \leq X$.  Then
\begin{equation}\label{non-singular iteration equation}
 \oint |\mathfrak{F}_p^{\vsigma}(\valpha)|^2 |\mathfrak{f}_p(\valpha; \vxi)|^{2(s-M)}\intd\valpha \ll X^{2M} p^{2Mk - K} J_{s-M, \Phi}(2X/p).
\end{equation}
\end{lemma}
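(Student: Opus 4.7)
The plan is to expand the integral via orthogonality, perform the $p$-adic dilation $\vu_j = \vxi + p\vw_j$ in the $\mathfrak{f}_p$-variables, and reduce matters to a congruence count to which Lemma \ref{auxilliary congruence count lemma} applies.

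By \eqref{orthogonality relations}, the integral counts tuples $((\vx_j), (\vx_j'), (\vu_j), (\vu_j'))$ with $(\vx_j), (\vx_j') \in [X]^{2M}$ satisfying $\Delta \not\equiv 0 \bmod p$, and with $\vu_j, \vu_j' \in [X]^2$ congruent to $\vxi \bmod p$, obeying
\begin{equation*}
\sum_{j=1}^M \sigma_j\bigbrac{\vF(\vx_j) - \vF(\vx_j')} + \sum_{j=1}^{s-M}\bigbrac{\vF(\vu_j) - \vF(\vu_j')} = 0.
\end{equation*}
I would then set $\vu_j = \vxi + p\vw_j$ (and similarly for $\vu_j'$), so that $\vw_j, \vw_j'$ lie in a box of side at most $2X/p$. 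Translation-dilation invariance \eqref{matrix translation invariance} together with the homogeneity of each $F_i$ yields $\vF(\vxi + p\vw) = \Xi_\vxi D_p \vF(\vw) + \vF(\vxi)$, where $D_p = \mathrm{diag}(p^{k_1}, \dots, p^{k_N})$. Writing $\vh = \sum_{j=1}^{s-M}(\vF(\vw_j) - \vF(\vw_j'))$, the equation collapses to $\sum_{j=1}^M \sigma_j(\vF(\vx_j) - \vF(\vx_j')) = -\Xi_\vxi D_p \vh$.

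Next I would apply $\Xi_\vxi^{-1}$ and invoke the identity $\vF(\vx - \vxi) = \Xi_\vxi^{-1}(\vF(\vx) - \vF(\vxi))$, which is immediate from \eqref{matrix translation invariance}. This converts the equation into the congruence system
\begin{equation*}
\sum_{j=1}^M \sigma_j\bigbrac{F_i(\vx_j - \vxi) - F_i(\vx_j' - \vxi)} = -p^{k_i}h_i \qquad (1 \leq i \leq N),
\end{equation*}
in which $\vh$ is determined by the $\vx$-variables provided the left side is divisible by $p^{k_i}$ for each $i$. For each valid $(\vx, \vx')$, the number of completions $(\vw_j, \vw_j')$ producing a given $\vh$ is at most $J_{s-M, \Phi}(2X/p; \vh) \leq J_{s-M, \Phi}(2X/p)$, the second bound following trivially from \eqref{orthogonality relations} and positivity of $|f|^{2(s-M)}$.

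It remains to bound the number $T$ of pairs $((\vx_j), (\vx_j')) \in [X]^{4M}$ with $\Delta \not\equiv 0 \bmod p$ satisfying the congruence system. I would partition the $(\vx_j)$ according to the residue class $\vm$ of $\bigbrac{\sum_j \sigma_j F_i(\vx_j - \vxi)}_i$ modulo $(p^{k_i})_i$, write $N(\vm)$ for each class count, and observe $T = \sum_\vm N(\vm)^2 \leq \bigbrac{\max_\vm N(\vm)} \sum_\vm N(\vm)$. Since $p^k \geq X$ by hypothesis, each residue class modulo $p^k$ has at most one representative in $[X]$, so $N(\vm) \leq |\mathcal{B}_p^{\vsigma}(\vm; \vxi)| \leq k_1 \cdots k_N\, p^{2Mk - K}$ by Lemma \ref{auxilliary congruence count lemma}. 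Combined with the trivial bound $\sum_\vm N(\vm) \leq X^{2M}$, this yields $T \ll p^{2Mk - K} X^{2M}$, whence the lemma. The main obstacle is the matrix bookkeeping in the second paragraph: one must check that the lower-unitriangular $\Xi_\vxi$ and the diagonal scaling $D_p$ interact cleanly to produce a congruence system whose moduli $p^{k_i}$ match the degrees of the $F_i$. Once this reduction is in hand, the remaining estimates are a routine application of Lemma \ref{auxilliary congruence count lemma} combined with the $\sum N^2 \leq \max N \cdot \sum N$ device.
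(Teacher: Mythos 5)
Your proposal is correct and is essentially the paper's own argument: both proofs use translation--dilation invariance and homogeneity (via the substitution $\vu = \vxi + p\vw$) to reduce the $\mathfrak{f}_p$-variables to a count of size $J_{s-M,\Phi}(2X/p)$, and both control the non-singular $M$-blocks through the congruence system of Lemma \ref{auxilliary congruence count lemma}, using $p^k \geq X$ to make reduction modulo $p^k$ injective on $[X]$. Your counting device $\sum_{\vm} N(\vm)^2 \leq \bigl(\max_{\vm} N(\vm)\bigr)\sum_{\vm} N(\vm)$ is just a repackaging of the paper's step of fixing one block in $X^{2M}$ ways and bounding the other by $|\mathcal{B}^{\vsigma}_p(\vm;\vxi)| \leq k_1 \dotsm k_N\, p^{2Mk-K}$.
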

 
 \begin{proof}
The left-hand side of \eqref{non-singular iteration equation} counts tuples $(\vx_1, \vy_1, \dots , \vx_s, \vy_s) \in [X]^{4s}$ satisfying the Diophantine equations
\begin{equation}
\sum_{j=1}^M \sigma_j\bigbrac{\vF(\vx_j) - \vF(\vy_j)} = \sum_{j=M+1}^{s} \bigbrac{\vF(\vx_j) - \vF(\vy_j)},
\end{equation}
under the additional constraints that both $\Delta(\vx_1, \dots,\vx_M)$ and $\Delta(\vy_1, \dots, \vy_M)$ are non-zero modulo $p$, and for all $j > M$ we have $\vx_j \equiv \vy_j \equiv \vxi \pmod p$.
Translation-invariance \eqref{equation translation invariance} and homogeneity together imply that
$$
\sum_{j=1}^M \sigma_j \Bigbrac{F_i(\vx_j - \vxi) - F_i(\vy_j - \vxi)} \equiv 0 \mod p^{k_i} \qquad (1\leq i \leq N).
$$
Fix a choice of $(\vy_1, \dots, \vy_M) \in [X]^{2M}$ and set
$$
\vm = \sum_{j=1}^M \sigma_j F_i(\vy_j - \vxi).
$$
Let $\overline{x}$ denote the residue class of $x \in \Z$ modulo $p^k$.  Then $(\overline{\vx}_1, \dots, \overline{\vx}_M) \in \mathcal{B}_{p}^{\vsigma}(\vm; \vxi)$.  Since $p^k \geq X$, the map 
$$
(\vx_1, \dots, \vx_M) \mapsto (\overline{\vx}_1, \dots, \overline{\vx}_M)
$$
is injective when restricted to $[X]^{2M}$.  Hence, there are at most $|\mathcal{B}^{\vsigma}_p(\vm; \vxi)|$ choices for $(\vx_1, \dots, \vx_M)$.  Set 
$$
\vn = \sum_{j=1}^M \sigma_j \Bigbrac{F_i(\vx_j - \vxi) - F_i(\vy_j - \vxi)}.
$$
Then for each fixed choice of $(\vx_1, \vy_1, \dots, \vx_M, \vy_M)$, the number of choices for the remaining $\vx_{j}, \vy_{j}$ $(j > M)$ is at most
$$
\oint |\mathfrak{f}_p(\valpha ; \vxi)|^{2(s-M)}e(- \valpha \cdot \vn) \intd \valpha \leq \oint |\mathfrak{f}_p(\valpha ; \vxi)|^{2(s-M)} \intd \valpha.
$$

Employing Lemma \ref{auxilliary congruence count lemma}, it remains to establish that
\begin{equation}\label{crude bounds}
\oint |\mathfrak{f}_p(\valpha; \vxi)|^{2t} \intd \valpha \ll_t J_t(2X/p).
\end{equation}
We may assume $\vxi \in [p]^2$.  The integral in \eqref{crude bounds} then counts the number of solutions to the system
$$
\sum_{i=1}^{t} (\vF(\vxi + p\vx_i) - \vF(\vxi + p\vy_i)) = 0,\quad 
\vx_i, \vy_i  \in \sqbrac{0, \tfrac{X- \xi_1}{p}} \times    \sqbrac{0, \tfrac{X- \xi_2}{p}}.
$$
By \eqref{equation translation invariance} and homogeneity, this equals the number of solutions of the system
$$
\sum_{i=1}^t (\vF(\vx_i) - \vF(\vy_i)) = 0,\quad
\vx_i, \vy_i  \in  \sqbrac{1, \tfrac{X- \xi_1}{p}+1} \times   \sqbrac{1, \tfrac{X- \xi_2}{p}+1}.
$$
Since we may assume $X \geq p$, the result follows.\end{proof}
 
 To conclude this section, we prove our mean value theorem.
 
 \begin{proof}[Proof of the Theorem \ref{intro VMVT}]
 We proceed by induction on $\floor{s/M}\geq 0$.  The basis case is equivalent to $J_{s,\Phi}(X) \ll X^{4s}$, which is trivial.  

Let us suppose that $\floor{s/M} \geq 1$.  Combining Lemma \ref{iteration initiation} and Lemma \ref{non-singular iteration lemma}, we see that there exists a prime $p$ in the interval $(X^{1/k}, 2X^{1/k}]$ such that
\begin{equation}\label{key iteration inequality}
J_{s,\Phi}(X) \ll X^{2s + M -1} + p^{4(s-M) - K} X^{4M} J_{s-M, \Phi}(2X/p).
\end{equation}
Combining this with our induction hypothesis implies that
$$
J_{s,\Phi}(X)  \ll X^{2s + M -1} +  X^{4s- K + \Delta_{s} }.
$$
It therefore remains to show that $2s + M -1 \leq 4s - K + \Delta_s$, which we also prove by induction on $\floor{s/M} \geq 1$.  The basis case follows directly from the estimate $K/k \leq M+\recip{2}$ of Lemma \ref{Delta inequality corollary}.  Let us suppose $\floor{s/M}\geq 2$, then by induction $2s + M -1$, being equal to $2M + 2(s-M) + M -1$, is at most
\begin{align*}
2M + 4(s-M) - K + K(1-\trecip{k})^{\floor{\frac{s}{M}}-1} & = 4s - K + \Delta_s + \tfrac{K}{k}(1-\trecip{k})^{\floor{\frac{s}{M}}-1} - 2M\\
			& \leq 4s - K + \Delta_s + \tfrac{K}{k} - 2M.
\end{align*}
Since $K = \sum_{i=1}^N k_i \leq N k$ and $N \leq 2M$, we have $\tfrac{K}{k} - 2M \leq 0$, which completes the proof.\end{proof}
 
 \section{Weyl-type estimates}\label{Weyl-type section}
 
 \begin{definition}  Let us say $\Delta = \Delta_s$ is an \emph{admissible exponent} for $(s, \Phi)$ if there exists a constant $C = C(s, \Phi)$ such that for any $X \geq 1$ we have the bound $J_{s, \Phi}(X) \leq CX^{4s - K + \Delta}$.
 \end{definition}
 
 The aim of this section is to prove the following Weyl-type estimate.
 
 \begin{theorem}\label{minor arc estimate theorem}
 Let $\Delta$ be an admissible exponent for $(s, \Phi)$ and let $\sigma <\tfrac{1-3\Delta}{6s + 3}$.  Then for any $\eps > 0$ there exists a constant $C = C(s, \Phi, \eps)$ such that if $X \geq C$ and
 \begin{equation}\label{f lower bound}
|f(\valpha; X)| \geq X^{2 - \sigma},
\end{equation}
then there exists integers $q, a_1, \dots, a_N$ such that $1 \leq q \leq X^{k\sigma + \eps}$ and $|q \alpha_j - a_j| \leq X^{k\sigma +\eps - k_j}$ for all $1 \leq j \leq N$.
 \end{theorem}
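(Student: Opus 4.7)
The plan is to adapt the classical Vinogradov--Weyl argument (as in Chapter~5 of Vaughan's book) to the present setting, using Theorem \ref{intro VMVT} as the main analytic input. The heuristic is that a large value of $|f(\valpha)|$ must force $\valpha$ to lie close to a rational with controlled denominator; the structural tool making this work for binary forms is the translation-dilation invariance \eqref{matrix translation invariance}, which replaces the explicit binomial expansions used in the one-dimensional Vinogradov setting.

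Starting from the hypothesis \eqref{f lower bound}, I would first square and Weyl-difference:
\[
|f(\valpha)|^2 \;\ll\; \sum_{\vh \in \Z^2} \Bigabs{ \sum_{\vx} e\bigbrac{\valpha \cdot (\vF(\vx + \vh) - \vF(\vx))} },
\]
where $\vx$ runs over a box depending on $\vh$. By \eqref{matrix translation invariance}, the phase rewrites as
\[
\valpha \cdot (\vF(\vx+\vh) - \vF(\vx)) \;=\; \vbeta(\vh) \cdot \vF(\vx) \;+\; \valpha \cdot \vF(\vh),
\]
where $\vbeta(\vh) := (\Xi_\vh - I)^T \valpha$ has a strictly triangular dependence on the coordinates of $\valpha$, with coefficients that are polynomials in $\vh$. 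After absorbing the $\vx$-independent phase $e(\valpha \cdot \vF(\vh))$ into absolute values, the inner sum is again an exponential sum of the same type, namely $f(\vbeta(\vh);\cdot)$ over a translated box.

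Next I would raise to a suitable power, apply H\"older's inequality in the variable $\vh$, and invoke Theorem \ref{intro VMVT} to bound the resulting $2s$-th moment by $O(X^{4s - K + \Delta})$. Careful bookkeeping of the exponents, together with the hypothesis $\sigma < (1-3\Delta)/(6s+3)$, then establishes that $|f(\vbeta(\vh);\cdot)|$ must itself be large for a substantial proportion of shifts $\vh$; equivalently, the linear forms $\vbeta(\vh)\cdot\vF(\vx)$ lie close to integers for many pairs $(\vx,\vh)$.

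Finally, I would convert this abundance of large-value shifts into simultaneous rational approximations to the coordinates of $\valpha$. Since $\vbeta(\vh)$ depends on $\valpha$ in a triangular fashion, a Dirichlet-type pigeonhole argument yields approximations to the $\alpha_j$ in a hierarchical order: one first extracts an approximation to the coefficient of the top-degree form $F_1$, which is then fed into the next equation to approximate the next coordinate, and so on; the coordinate $\alpha_N$, which does not appear in $\vbeta(\vh)$, is recovered from the free phase $\valpha \cdot \vF(\vh)$. I expect the main obstacle to be maintaining a single \emph{common} denominator $q \leq X^{k\sigma+\eps}$ across all coordinates while preserving the quality $|q\alpha_j - a_j| \leq X^{k\sigma+\eps - k_j}$; at each iteration one must clear denominators without too great an accumulated loss, and the tight constraint on $\sigma$ is precisely what renders this feasible, with the $X^\eps$ slack absorbing the cumulative cost.
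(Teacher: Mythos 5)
Your outline follows the same broad philosophy as the paper (shifting the argument of $f$, exploiting \eqref{matrix translation invariance}, applying H\"older and the mean value theorem of Theorem \ref{intro VMVT}), but it has two genuine gaps at exactly the points where the real work lies. First, the differencing-plus-pigeonhole argument cannot by itself deliver the advertised quality of approximation. What it naturally yields (and what the paper's Lemmas \ref{Vinogradov method lemma}--\ref{approximation to spacing lemma} yield) is a modulus of size roughly $X^{2s\sigma+\Delta}$ up to logarithms for the coefficients $\alpha_i$ with $k_i\geq 2$; with $\sigma$ close to $(1-3\Delta)/(6s+3)$ this is about $X^{1/3}$, vastly weaker than the required $q\leq X^{k\sigma+\eps}$, since $k\sigma\ll k/s$ is tiny. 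The paper bridges this gap with a separate major-arc bootstrap that your proposal omits entirely: having pinned $\valpha$ near a rational point with a crude modulus $q$, one compares $f(\valpha)$ with $V(\valpha;q,\va)$ via Lemma \ref{generating asymptotic}, and then the decay estimates $S(q,\va)\ll (q,\va)^2 q^{2-1/k+\eps}$ (Lemma \ref{complete sum bound}) and $I(\vbeta;X)\ll X^2(1+\sum_i X^{k_i}|\beta_i|)^{-1/k+\eps}$ (Lemma \ref{integral bound}), combined with $|f(\valpha)|\geq X^{2-\sigma}$, force $q+\sum_i|q\alpha_i-a_i|X^{k_i}\ll X^{k\sigma+\eps/2}$. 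The exponent $k\sigma$ in the theorem comes precisely from the $1/k$ in these decay rates; no amount of ``$X^\eps$ slack absorbing the cumulative cost'' in a hierarchical pigeonhole can substitute for this step.

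Second, your treatment of the coefficients attached to the degree-one forms is circular: you propose to recover them ``from the free phase $\valpha\cdot\vF(\vh)$'', but in your first step you absorbed exactly that phase into absolute values, so the information is gone (and there may be two such coefficients, not only $\alpha_N$). The differenced phase genuinely carries no information about them, which is why the paper approximates these coefficients by a fresh Dirichlet application (with the auxiliary parameter $\mu$ in \eqref{mu size}) and then verifies the whole approximation through the same major-arc comparison. A related point you gloss over with the phrase ``triangular dependence'': recovering rational approximations to the $\alpha_i$ from near-integrality of the shifted coefficients requires inverting the linear maps sending $(\alpha_i)_{k_i=d}$ to the coefficients $\gamma^{(1)}_j$, and these maps need not be invertible for shifts in a single direction (a form lying in $\Q[x_2]$ is invisible to shifts along $\ve_1$). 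This is the content of Definition \ref{Im def} and the nonsingularity of the matrices $A_d$ in Lemma \ref{one coeff spacing lemma}, which your argument would need in some form, either by working with both coordinate directions as the paper does or by justifying invertibility for genuinely two-dimensional shifts.
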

 
The proof of Theorem \ref{minor arc estimate theorem} uses Vinogradov's method, a general heuristic for which is neatly described in \cite[\S 8.5]{iwanieckowalski}.  We model our argument on a version of the method due to Vaughan \cite[Chapter 5]{vaughan97}, with a later improvement due to Baker \cite[Chapter 4]{baker86}.

 Let $\gamma_{ij}(\vxi)$ denote the $ij$-entry of the matrix $\Xi_{\vxi}$ occurring in \eqref{matrix translation invariance}.  Then for any $\vy$ we have the identity
$$
F_i(\vx + \vy) - F_i(\vy) = \sum_{j} \gamma_{ij}(\vy) F_j(\vx) = F_i(\vx) + \sum_{j > i} \gamma_{ij}(\vy) F_j(\vx).
$$
Set 
$$
\gamma_j(\vy) = \gamma_j(\vy; \valpha) =  \sum_{i < j}  \alpha_i\gamma_{ij}(\vy),
$$
then for all $\vx$ we have
\begin{equation}\label{shift identity}
\valpha \cdot \Bigbrac{\vF(\vx + \vy ) -  \vF(\vx)- \vF(\vy)} = \sum_{j = 2}^N \gamma_j(\vy) F_j(\vx).
\end{equation}

In the following result, and the remainder of the paper, we use $\norm{\beta}$ to denote the smallest distance from $\beta$ to an integer. 
 
 \begin{lemma}\label{Vinogradov method lemma}
Let $\Delta$ denote an admissible exponent for $(s, \Phi)$ and let $\mathcal{S}$ be a subset of $[X]^2$ of size $S$ such that for any distinct $\vy, \vz \in \mathcal{S}$ there exists $2 \leq j \leq N$ with
\begin{equation}\label{spacing condition}
\norm{ \gamma_j(\vy) - \gamma_j(\vz)} > X^{-k_j}.
\end{equation}
 Then one has
\begin{equation}\label{Vaughan Weyl-type estimate}
|f(\valpha; X)| \ll X^{2}\log (2X)^2\brac{X^\Delta/S}^{1/(2s)} .
\end{equation}
\end{lemma}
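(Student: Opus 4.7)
The plan is to follow the Vinogradov--Weyl template: average $f(\valpha)$ over the shifts $\vy\in\mathcal{S}$, apply H\"older's inequality, and reduce to a mean value controlled by the admissibility hypothesis. Performing the substitution $\vx\mapsto\vx-\vy$ in $f(\valpha)$ and invoking the shift identity \eqref{shift identity}, one obtains for each $\vy\in\mathcal{S}$ the exact identity
$$f(\valpha)=e\bigbrac{\valpha\cdot\vF(\vy)}\sum_{\vz\in[X]^2-\vy}e\bigbrac{\vbeta(\vy)\cdot\vF(\vz)},\qquad \vbeta(\vy):=\valpha+\vgamma(\vy).$$
The inner sum differs from $f(\vbeta(\vy))$ only by a boundary term of size $O(X\|\vy\|_\infty)$ coming from the symmetric difference $[X]^2\triangle([X]^2-\vy)$. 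Aligning phases by suitable unimodular weights and summing over $\vy\in\mathcal{S}$, the triangle inequality gives
$$S|f(\valpha)|\leq\sum_{\vy\in\mathcal{S}}|f(\vbeta(\vy))|+E,\qquad E=O\bigbrac{XS\max_{\vy\in\mathcal{S}}\|\vy\|_\infty}.$$

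Applying H\"older in exponents $2s$ and $2s/(2s-1)$ then yields
$$S\,|f(\valpha)|^{2s}\ll\sum_{\vy\in\mathcal{S}}|f(\vbeta(\vy))|^{2s}+E^{2s}S^{1-2s}.$$
The technical heart of the argument is to bound the sum on the right via a large-sieve-style estimate. Observe that $|f(\vbeta)|^{2s}=\sum_{\vm}J_{s,\Phi}(X;\vm)\,e(-\vbeta\cdot\vm)$ is a non-negative trigonometric polynomial of degree $O(X^{k_j})$ in the coordinate $\beta_j$, and by hypothesis the axis-aligned boxes of sidelength $cX^{-k_j}$ centred at the points $\vbeta(\vy)$ are pairwise disjoint in the sense that, for any two such boxes, their projections to some coordinate axis are disjoint. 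A Fej\'er-kernel majorisation combined with a dyadic decomposition of $\mathcal{S}$ according to the separating coordinate yields
$$\sum_{\vy\in\mathcal{S}}|f(\vbeta(\vy))|^{2s}\ll\bigbrac{\log 2X}^{4s}X^K\oint|f(\vbeta)|^{2s}\intd\vbeta=\bigbrac{\log 2X}^{4s}X^K\,J_{s,\Phi}(X).$$
Substituting the admissibility bound $J_{s,\Phi}(X)\ll X^{4s-K+\Delta}$ and extracting a $(2s)$-th root delivers precisely the estimate \eqref{Vaughan Weyl-type estimate}.

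The principal obstacle is the displayed large-sieve inequality: the weak spacing hypothesis (separation in merely one coordinate per pair, rather than in every coordinate) prevents the direct use of a product-form large sieve, and the dyadic/Gallagher-style decomposition needed to compensate is exactly what produces the $(\log 2X)^2$ loss in the final conclusion. A secondary technicality is the treatment of the boundary error $E$, which is a priori of size $O(X^2S)$ and hence competes with the trivial estimate $|f(\valpha)|\le X^2$; in the non-trivial regime $S\gg X^{\Delta}(\log 2X)^{4s}$ it can be absorbed, or alternatively sidestepped by a preliminary restriction of $\mathcal{S}$ to a sub-box of side strictly smaller than $X$.
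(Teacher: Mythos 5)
Your overall skeleton matches the paper's: average over the shifts $\vy\in\mathcal{S}$ using the translation identity \eqref{shift identity}, apply H\"older, bound the resulting sum over $\vy$ by a multidimensional large sieve with the spacing hypothesis \eqref{spacing condition}, and finish with the admissible-exponent bound on $J_{s,\Phi}(X)$. However, there is a genuine gap at the very first step: after substituting $\vx\mapsto\vx-\vy$, the inner sum runs over $[X]^2-\vy$, and the symmetric difference $[X]^2\triangle([X]^2-\vy)$ contains $\asymp X\norm{\vy}_\infty$ lattice points, which for generic $\vy\in[X]^2$ is $\asymp X^2$. After dividing by $S$, your error term $E/S=O(X\max_{\vy}\norm{\vy}_\infty)=O(X^2)$ is of exactly the trivial size of $f(\valpha)$ and therefore can never be absorbed into the target bound, no matter how large $S$ is; the "non-trivial regime" remark does not rescue this, and restricting $\mathcal{S}$ to a sub-box does not help either, since the error depends on the distance of $\vy$ from the origin (not on the diameter of $\mathcal{S}$) and any genuine shrinking of the shift range costs a polynomial factor in $S$ that propagates into the exponent. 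The paper avoids this entirely by \emph{not} discarding the domain mismatch: it keeps the indicator $1_{[X]^2-\vy}(\vx)$, Fourier-expands it as $\oint e(\vbeta\cdot\vx)\sum_{\vn\in[X]^2-\vy}e(-\vbeta\cdot\vn)\,\intd\vbeta$, and uses $\bigabs{\sum_{n}e(-\beta n)}\le\min\{X,\norm{\beta}^{-1}\}$. This completion step is precisely the source of the $\log(2X)^2$ factor in \eqref{Vaughan Weyl-type estimate} (you attribute that factor to the large sieve, which is incorrect: Vaughan's Lemma 5.3 already accommodates the ``separated in at least one coordinate'' hypothesis with no logarithmic loss), and it introduces an auxiliary linear phase $\vbeta\cdot\vx$ that must be carried through the rest of the argument.

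Two further points are worth fixing but are not fatal. First, the points $\vbeta(\vy)=\valpha+\vgamma(\vy)$ all share the same first coordinate (since $\gamma_1\equiv 0$), so there is no spacing at all in the $\beta_1$ direction; the paper handles this by peeling off $F_1$, applying the large sieve only in the coordinates $2\le j\le N$ (gaining $\prod_{2\le i\le N}X^{k_i}=X^{K-k_1}$), and then bounding $\sum_{\vn}|a(\vn)|^2$ by $\sum_m J_{s,\Phi}(2X+1;m\ve_1)$ over $O(X^{k_1})$ values of $m$. Your version, which formally inserts a trivial factor $X^{k_1}$ for the unspaced coordinate, arrives at the same total $X^K J_{s,\Phi}(X)$, but you should make this explicit rather than assert spacing in every coordinate. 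Second, your Fej\'er-majorant plus dyadic decomposition is an unnecessary detour: since $|f(\vbeta)|^{2s}=|f(\vbeta)^s|^2$, the $L^2$ form of the multidimensional large sieve applies directly.
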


\begin{proof}

Averaging, we see that $f(\valpha)$ is equal to
\begin{equation}
\recip{S} \sum_{\vy \in \mathcal{S}}\sum_{\vx \in (-X, X)^2 } e\bigbrac{ \valpha\cdot \vF(\vx + \vy)}1_{[X]^2 - \vy}(\vx).
\end{equation}
By orthogonality
$$
1_{[X]^2 - \vy}(\vx) = \oint e(\vbeta\cdot \vx)\sum_{\vn \in [X]^2 - \vy}  e(- \vbeta\cdot\vn) \intd\vbeta.
$$
Since $\sum_{1-y \leq n \leq X - y}  e(- \beta n) \leq \min\set{X, \norm{\beta}^{-1}}$, the sum $f(\valpha)$ is at most
\begin{align*}
\recip{S}  \oint&\Bigabs{\twosum{\vy \in \mathcal{S}}{\vx \in (-X, X)^2 }  e\bigbrac{ \valpha\cdot \vF(\vx + \vy)+\vbeta\cdot \vx}}\min\set{X, \norm{\beta_1}^{-1}}\min\set{X, \norm{\beta_2}^{-1}} \intd\vbeta\\
& \ll \frac{\log(2X)^2}{S}\sup_{\vbeta}\sum_{\vy \in \mathcal{S}}\abs{g(\vy, \vbeta)},
\end{align*}
where
$$
g(\vy, \vbeta)= \sum_{\vx \in (-X, X)^2 }    e\brac{ \valpha\cdot \vF(\vx + \vy)+\vbeta\cdot \vx }.
$$
It follows from H\"{o}lder's inequality that there exists $\vbeta \in \T^2$ such that 
\begin{equation}\label{shifted average estimate}
|f(\valpha)|^{2s} \ll \frac{\log(2X)^{4s}}{S}\sum_{\vy \in \mathcal{S}}\abs{g(\vy, \vbeta)}^{2s}.
\end{equation}

Let $C = C(s, \Phi)$ be any constant such that for all $1 \leq i \leq N$ we have 
$$
\Bigabs{ \sum_{j=1}^s F_i(\vx_j)} \leq CX^{k_i} \quad (\vx_j \in (-X, X)^2).
$$
Define $\vF'(\vx) = (F_i(\vx))_{2\leq i \leq N}$, $\mathcal{N} = \prod_{2 \leq i \leq N} [-CX^{k_i}, CX^{k_i}]$ and 
$$
a(\vn) = \twosum{\vx_1, \dots, \vx_s \in (-X, X)^2}{\vn = \sum_{j=1}^s \vF'(\vx_j)} e\Bigbrac{ \valpha\cdot \sum_{j=1}^s \vF(\vx_j) + \vbeta \cdot \sum_{j=1}^{s} \vx_j}.
$$
Then by \eqref{shift identity} we have
\begin{align*}
\Bigabs{\sum_{\vx \in (-X, X)^2} &e(\valpha\cdot \vF(\vx+\vy) + \vbeta\cdot\vx)}^{2s}\\ & = \Bigabs{\sum_{\vx_1, \dots, \vx_s \in (-X, X)^2} e\Bigbrac{\valpha\cdot \sum_{j=1}^s \vF(\vx_j) + \vbeta \cdot \sum_{j=1}^s \vx_j}e\Bigbrac{\vgamma(\vy) \cdot \sum_{j=1}^s \vF'(\vx_j)}}^{2}\\
& = \Bigabs{\sum_{\vn \in \mathcal{N}} a(\vn) e(\vgamma(\vy)\cdot \vn)}^2.
\end{align*}
By \eqref{spacing condition} and the multi-dimensional version of the large sieve inequality (see for example Vaughan \cite[Lemma 5.3]{vaughan97}), we have
\begin{equation}\label{large sieve application}
\sum_{\vy \in \mathcal{S}}\Bigabs{\sum_{\vn \in \mathcal{N}} a(\vn) e(\vgamma(\vy)\cdot \vn)}^2   \ll_{s, \Phi} \sum_{\vn \in \mathcal{N}} |a(\vn)|^2 \prod_{2 \leq i \leq N} X^{k_i}.
\end{equation}
Let $\ve_1$ denote the first standard basis vector.  Recalling that $J_{s, \Phi}(X; \vm)$ denotes the number of $(\vx, \vy) \in [X]^{4s}$ satisfying
 $$
 \sum_{i=1}^s \bigbrac{\vF(\vx_i) - \vF(\vy_i)} = \vm,
 $$
 we have, by translation invariance, that
\begin{equation}\label{L 2 a estimate}
\sum_{\mathcal{N}} |a(\vn)|^2 \leq \sum_m J_{s, \Phi}(2X + 1; m\ve_1) ,
\end{equation}
where the summation over $m$ ranges over a set of size $CX^{k_1}$.
If $\Delta$ is an admissible exponent for $(s,\Phi)$ then we deduce that the right-hand side of \eqref{L 2 a estimate} is of order $O(X^{4s - (K-k_1) + \Delta})$.  Putting these facts together with \eqref{large sieve application}, we obtain
$$
|f(\valpha)|^{2s} \ll_{s, \Phi} \frac{\log(2X)^{4s}}{S} X^{4s + \Delta}.
$$
\end{proof}

 \begin{remark}
 In the proof of Lemma \ref{Vinogradov method lemma}, we used $J_{s, \Phi}(X)$ to bound the number $J'_{s, \Phi}(X)$ of solutions $(\vx, \vy) \in [X]^{4s}$ to the smaller system of equations
 $$
 \sum_{i=1}^s \bigbrac{\Phi^{u,v}(\vx_i) - \Phi^{u,v}(\vy_i)} = 0 \qquad (u+v \geq 1).
 $$
 We note that the methods of \S \ref{mean value section} translate almost verbatim to yield the bound $J'_{s, \Phi}(X) \ll X^{4s - (K-k) + \Delta}$, where for $s = l\ceil{(N-1)/2}$ we have 
  $$
 \Delta \leq (K-k)(1 - \trecip{k-1})^{l}.
 $$
 This $\Delta$ is clearly superior to that obtained  using $J_{s, \Phi}(X)$.  However, at the level of detail we are concerned with, this makes little difference to our final results, and increases the expositional complexity of \S \ref{mean value section} considerably.
 \end{remark}

In order to obtain a set $\mathcal{S}$ satisfying the spacing condition \eqref{spacing condition}, we relate this condition to the Diophantine approximation of our original coefficients $\alpha_i$.  This is the content of the following lemma.  Unfortunately, the most direct approach allows us to control the spacing of the $\gamma_j(\vy)$ only according to the Diophantine approximation of a proper subset of the $\alpha_i$.  We first define this subset.

\begin{definition}\label{Im def}
We assume throughout that $I_1, I_2$ denote sets of indices whose union equals $\set{ r \in [N]: k_r \geq 2}$ and such that if $\set{i, j} = \set{1, 2}$ then both of the following conditions hold
\begin{equation}\spn\set{F_r(x_1,x_2) : r \in I_i} \cap \Q[x_j] = \set{0},\end{equation}
\begin{equation}\set{F_r(x_1,x_2) : r \notin I_i} \subset \Q[x_j].\end{equation}
Making a linear transformation of the $F_i$ if necessary, we can always guarantee the existence of such $I_1$ and $I_2$.
\end{definition}

\begin{lemma}\label{one coeff spacing lemma}
Let $m \in\set{1,2} $.  There exists an absolute constant $C = C(\Phi)$ and a positive integer $L \leq C$ such that for any $y, z \in [X]$ and $\valpha \in \T^N$, if
\begin{equation}\label{gamma spacing condition}
\norm{\gamma_j(y\ve_m) - \gamma_j(z\ve_m)} \leq X^{-k_j}\quad  \text{for all } 2\leq j \leq N,
\end{equation}
then
\begin{equation}\label{alpha spacing condition}
\norm{L\alpha_i(y - z)} \leq CX^{1-k_i}\quad \text{for all $i \in I_m$}.
\end{equation}
\end{lemma}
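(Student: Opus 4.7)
I plan to treat $m = 1$; the case $m = 2$ is symmetric under the swap $x_1 \leftrightarrow x_2$ and $I_1 \leftrightarrow I_2$. Write $w = y - z$. Taylor expansion in the $x_1$-direction gives $F_i(x_1 + y, x_2) = \sum_{u \geq 0} \frac{y^u}{u!} F_i^{u, 0}(x_1, x_2)$, and since $\spn\{F_l\}$ is closed under differentiation on its positive-degree part, each $F_i^{u, 0}$ with $1 \leq u < k_i$ expands as $\sum_{l : k_l = k_i - u} A_{il}^{(u)} F_l$ with rational coefficients depending only on $\Phi$. Matching against $F_i(\vx + \vy) - F_i(\vx) - F_i(\vy) = \sum_{j > i} \gamma_{ij}(\vy) F_j(\vx)$ yields $\gamma_{ij}(y \ve_1) = A_{ij}\, y^{k_i - k_j}/(k_i - k_j)!$ with $A_{ij} := A_{ij}^{(k_i - k_j)}$, vanishing unless $k_i > k_j$. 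The critical observation is that the condition $\{F_r : r \notin I_1\} \subset \Q[x_2]$ forces $F_i^{u, 0} = 0$ for $i \notin I_1$ and every $u \geq 1$, so $A_{ij} = 0$ in those cases. Using $y^n - z^n = w\, p_n(y, z)$ with $p_n(y, z) = \sum_{a + b = n - 1} y^a z^b$ (so $|p_n| \leq n X^{n-1}$ for $y, z \in [X]$), the hypothesis rewrites as
\[
\norm{w \sum_{i \in I_1,\, k_i > k_j} \alpha_i \cdot \frac{A_{ij}}{(k_i - k_j)!} \cdot p_{k_i - k_j}(y, z)} \leq X^{-k_j} \qquad (2 \leq j \leq N).
\]

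I then invert this system (triangular in the degree $k_i$) by downward induction on $l$ from $l = k - 1$ to $l = 1$, producing positive integers $Q_l, D_l$ depending only on $\Phi$ such that $\norm{Q_l \alpha_i w} \leq D_l X^{1 - k_i}$ for every $i \in I_1$ with $k_i > l$. Fix a denominator $L^* = L^*(\Phi)$ making each $L^* A_{ij}/(k_i - k_j)!$ an integer. At step $l$, every $j$ with $k_j = l$ satisfies $j \geq 2$ (since $k_1 = k > l$); multiplying the displayed inequality by $L^* Q_{l+1}$ and splitting the inner sum according as $k_i = l + 1$ or $k_i > l + 1$, the coefficient of $Q_{l+1}\alpha_i w$ in the former is the integer $L^* A_{ij}$ (using $p_1 = 1$), while each coefficient in the latter is an integer of magnitude $\ll_\Phi X^{k_i - l - 1}$. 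Combined with the inductive estimate $\norm{Q_{l+1} \alpha_i w} \leq D_{l+1} X^{1 - k_i}$ (via $\norm{a\beta} \leq |a|\norm{\beta}$ for integer $a$) and the hypothesis bound $L^* Q_{l+1} X^{-l}$ on the right-hand side, we obtain
\[
\norm{\sum_{i \in I_1,\, k_i = l+1} L^* A_{ij} \cdot Q_{l+1} \alpha_i w} \ll_\Phi X^{-l} \qquad (j \text{ with } k_j = l).
\]

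To extract individual bounds I invoke the defining property $\spn\{F_r : r \in I_1\} \cap \Q[x_2] = \{0\}$, which makes the $x_1$-derivative injective on $\spn\{F_i : i \in I_1,\, k_i = l + 1\}$; consequently the integer matrix $(L^* A_{ij})_{j,\, i}$ above has full column rank. Selecting a square invertible submatrix $N$ (whose size and entries depend only on $\Phi$) and applying $\adj(N)$, we obtain $\norm{\det(N) \cdot Q_{l+1} \alpha_i w} \ll_\Phi X^{-l} = X^{1 - k_i}$ for every $i \in I_1$ with $k_i = l + 1$. Setting $Q_l := |\det(N)| Q_{l+1}$ and $D_l$ sufficiently large closes the induction; the base case $l = k - 1$ reduces to the single estimate $\norm{L^* A_{1, j^*} \alpha_1 w} \leq L^* X^{-(k-1)}$ for some $j^*$ with $A_{1, j^*} \neq 0$, which exists because $\Phi^{1, 0} \neq 0$ for non-degenerate $\Phi$. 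After $k - 1$ steps we take $L := Q_1$ and $C := D_1$, both bounded solely in terms of $\Phi$, yielding the lemma. The main difficulty is the disciplined tracking of how $Q_l$ and $D_l$ accumulate through the induction and verifying that the injectivity furnished by the definition of $I_1$ is precisely what makes the integer linear algebra at each step controllable with $\Phi$-dependent constants.
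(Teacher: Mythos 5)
Your proposal is correct and follows essentially the same route as the paper: Taylor expansion making $\gamma_j(y\ve_1)-\gamma_j(z\ve_1)$ a triangular system in the degrees, a downward induction through the degree levels with a fixed denominator $L^*$ clearing rationals, and the key injectivity of $\partial_{x_1}$ on $\spn\set{F_i : i \in I_1}$ (from Definition \ref{Im def}) inverted by integer linear algebra. The only difference is bookkeeping: the paper first isolates the first-order coefficients $\gamma_j^{(1)}$ via the composition identity $\lambda_{ij}^{r,0} = \sum_t \lambda_{it}^{r-1,0}\lambda_{tj}^{1,0}$ and then inverts the maps $A_d$ with a right-inverse $B_d$, whereas you fold the stripping of higher-degree contributions and the level-by-level inversion (via adjugates) into a single induction on the $\alpha_i$ themselves; both yield the same $\Phi$-dependent $L$ and $C$.
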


\begin{proof}
Let us suppose that $m=1$, the case $m=2$ being similar.  By Taylor's formula, we have
\begin{equation}\label{taylor form}
F_i(\vx + y\ve_1) = F_i(\vx) + \sum_{r=1}^{k_i-1} \frac{y^r}{r!} F_i^{r,0}(\vx) + F_i(y\ve_1).
\end{equation}
Since $F_1, \dots, F_N$ are a spanning subset of $\set{ \Phi^{u,v} : 0 \leq u+v < k}$, there must exist rationals $\lambda_{ij}^{r,0}$ such that for $1 \leq r < k_i$ we have 
\begin{equation}\label{Fs span derivatives}
F_i^{r, 0} = \twosum{j}{ k_j = k_i -r} \lambda_{ij}^{r,0} F_j.
\end{equation}
Combining \eqref{taylor form}, \eqref{Fs span derivatives} and \eqref{shift identity}, we obtain
\begin{align*}
\gamma_j(y\ve_1) = \gamma_j(y\ve_1; \valpha) &  = \sum_{r=1}^{k-k_j} \frac{y^r}{r!} \Biggbrac{\ \twosum{i}{ k_i = k_j + r} \alpha_i \lambda_{ij}^{r,0}}\\
									& = \sum_{r=1}^{k-k_j} \frac{y^r}{r!}\gamma_j^{(r)}, \ \text{say}.	
\end{align*} 
Notice that for $r\geq 2$ we have
\begin{align*}
\sum_j \lambda_{ij}^{r,0} F_j = F_i^{r,0}  & = \sum_t \lambda_{it}^{1,0} F_t^{r-1, 0}
						 = \sum_{t,j} \lambda_{it}^{1, 0} \lambda_{tj}^{r-1, 0} F_j.
\end{align*}
Hence, by linear independence, for all $i, j$ and $r \geq 2$ we have $\lambda_{ij}^{r,0} = \sum_t \lambda_{it}^{r-1, 0} \lambda_{tj}^{1,0}$.  It follows that 
\begin{equation}
\gamma_j(y\ve_1) = y\gamma^{(1)}_j + \sum_{r\geq 2} \frac{y^r}{r!} \twosum{t }{ k_t = k_j + r -1} \lambda_{tj}^{r-1, 0} \Biggbrac{\ \twosum{i}{ k_i = k_t +1} \alpha_i \lambda_{it}^{1,0}},
\end{equation}
and so $\gamma_j(y\ve_1) - \gamma_j(z \ve_1)$ must equal
\begin{equation}\label{gamma to gamma 1}
(y-z)  \gamma^{(1)}_j+ \sum_{2 \leq r \leq k - k_j} \frac{(y^{r-1}  + \dots + z^{r-1})}{r!} \twosum{t}{k_t = k_j + r -1} \lambda_{tj}^{r-1, 0}(y-z)\gamma_t^{(1)}.
\end{equation}

Let $L_1 = L_1(\Phi)$ be a positive integer such that $L_1\lambda_{ij}^{r,0}$ is an integer for all $i, j$ and $r$.  It follows almost immediately from the identity \eqref{gamma to gamma 1} and induction on the difference $k- k_j$, that there exists a constant $C_1 = C_1(\Phi)$ such that if  \eqref{gamma spacing condition} holds with $m=1$, then for any $2 \leq j \leq N$ we have
\begin{equation}\label{gamma 1 spacing condition}
\norm{L_1k! \gamma_j^{(1)}(y-z)} \leq C_1 X^{ - k_j}.
\end{equation}
Define the linear map
$$
A_d : (\beta_i)_{\substack{i \in I_1\\ k_i = d}} \mapsto \Bigbrac{\ \twosum{i \in I_1}{k_i = d} \beta_i \lambda_{ij}^{1,0}}_{\substack{ 2 \leq j \leq N \\ k_j = d-1}},
$$
so that 
$$
\brac{\gamma_j^{(1)}}_{k_j = d-1} = (\alpha_i)_{\substack{i \in I_1\\ k_i = d}} \cdot A_d.								$$
We claim that each $A_d$ is non-singular.  To this end suppose that $\vbeta A_d = 0$.  Then a little manipulation shows that
$$
\twosum{i \in I_1}{k_i = d} \beta_i F_i^{1, 0} = \twosum{j}{ k_j = d-1}\Bigbrac{ \twosum{i \in I_1}{k_i = d} \beta_i \lambda_{ij}^{1, 0}} F_j = 0.
$$
It follows that 
$$
\twosum{i\in I_1}{k_i = d} \beta_i F_i(x_1, x_2) \in \Q[x_2].
$$
This contradicts Definition \ref{Im def}, unless $\vbeta = 0$.  Hence for each $d$ there exists a rational matrix $B_d$ such that $A_dB_d = (I\ |\ 0)$, where $I$ denotes the identity matrix.  We therefore have that
\begin{equation}\label{gamma inversion}
\begin{split}
\brac{\gamma_j^{(1)}}_{k_j = d-1} \cdot B_d & =  (\alpha_i)_{\substack{i \in I_1\\ k_i = d}} \cdot (I\ |\ 0).
\end{split}
\end{equation}
Let $L_2 \in \N$ be such that all the matrices $L_2B_d$ ($2 \leq k \leq k$) have only integer entries.  Then by \eqref{gamma inversion} and \eqref{gamma 1 spacing condition}, for all $i\in I_1$ we have
$$
\norm{L_2 L_1 k! \alpha_i(y-z)} \ll_\Phi X^{1-k_i}.
$$
Taking $L = L_2 L_1 k!$, we obtain the lemma.\end{proof}

The next lemma combines Lemma \ref{Vinogradov method lemma} and Lemma \ref{one coeff spacing lemma}, a combination we record since we use it repeatedly in the proof of Theorem \ref{minor arc estimate theorem}.

\begin{lemma}\label{approximation to spacing lemma}
Fix $m \in \set{1, 2}$ and let $C$ and $L$ be as in Lemma \ref{one coeff spacing lemma}.  Suppose there exists a real $D \geq 1$ such that for any $y \in [X]$, there are at most $D$ elements $z \in [X]$ satisfying
\begin{equation}\label{single coefficient spacing bound}
\norm{L\alpha_i(y-z)} \leq CX^{1-k_i} \quad \text{for all $i \in I_m$}.
\end{equation}
Then we have
\begin{equation}
|f(\valpha;X)| \ll X^2 \log(2X)^2 \bigbrac{X^\Delta D/X}^{1/(2s)}.
\end{equation}
\end{lemma}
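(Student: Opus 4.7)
The plan is to construct a large subset $\mathcal{S} \subset [X]^2$ satisfying the spacing hypothesis of Lemma \ref{Vinogradov method lemma} and then invoke that lemma directly. The bridge between the two ingredients is the contrapositive of Lemma \ref{one coeff spacing lemma}: if for some $i \in I_m$ we have $\|L\alpha_i(y-z)\| > CX^{1-k_i}$, then there must exist some $2 \leq j \leq N$ with $\|\gamma_j(y\ve_m) - \gamma_j(z\ve_m)\| > X^{-k_j}$.

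With this in mind, I would restrict attention to the axis $\mathcal{S} = \{y\ve_m : y \in T\}$ for a carefully chosen $T \subset [X]$. The hypothesis of the present lemma tells us that the relation
\[
  y \sim z \quad\Longleftrightarrow\quad \|L\alpha_i(y-z)\| \leq CX^{1-k_i} \text{ for all } i \in I_m
\]
has the property that each $y \in [X]$ has at most $D$ neighbours (including itself). A standard greedy selection (pick $y_1 \in [X]$, delete its $\leq D$ neighbours, pick $y_2$, and so on) produces $T \subset [X]$ with $|T| \geq X/D$ such that any distinct $y,z \in T$ fail the relation, i.e. some $i \in I_m$ gives $\|L\alpha_i(y-z)\| > CX^{1-k_i}$. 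Combined with Lemma \ref{one coeff spacing lemma}'s contrapositive, every pair of distinct elements of $\mathcal{S}$ satisfies the spacing condition \eqref{spacing condition} required by Lemma \ref{Vinogradov method lemma}.

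Applying Lemma \ref{Vinogradov method lemma} with $S = |\mathcal{S}| = |T| \geq X/D$ then yields
\[
  |f(\valpha; X)| \ll X^{2}\log(2X)^{2}\bigbrac{X^{\Delta}/S}^{1/(2s)} \leq X^{2}\log(2X)^{2}\bigbrac{X^{\Delta}D/X}^{1/(2s)},
\]
which is the desired bound. There is no real obstacle here: the entire argument amounts to checking that the contrapositive direction of Lemma \ref{one coeff spacing lemma} is applicable and to running the trivial greedy selection that turns the pointwise count $D$ into a lower bound on $|T|$. The only minor point to be careful about is whether the assumption "at most $D$ elements $z$" includes $z = y$; it does (trivially), which only affects the constant and does not alter the conclusion.
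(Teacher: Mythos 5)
Your argument is correct and is essentially the paper's own proof: the paper likewise defines the graph on $[X]$ whose edges are the pairs satisfying \eqref{single coefficient spacing bound}, notes its maximal degree is at most $D-1$, extracts a greedy independent set of size $\gg X/D$, and feeds the resulting well-spaced set (via the contrapositive of Lemma \ref{one coeff spacing lemma}) into Lemma \ref{Vinogradov method lemma}. No changes needed.
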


\begin{proof}
By Lemma \ref{Vinogradov method lemma}, it remains to prove that our assumptions imply the existence of a set $\mathcal{S} \subset [X]$ of size $|\mathcal{S}| \gg XD^{-1}$ such that for any $y, z \in \mathcal{S}$ with $y \neq z$ there exists $2 \leq j \leq N$ with 
\begin{equation}\label{good spacing again}
\norm{\gamma_j(y \ve_m) - \gamma_j(z \ve_m)} > X^{-k_j}.
\end{equation}
By Lemma \ref{one coeff spacing lemma}, the spacing \eqref{good spacing again} holds for some $j$ if there exists $i \in I_m$ such that $\norm{L\alpha_i(y-z)} > CX^{1 - k_i}$.  Define $G$ to be the graph on vertex set $[X]$ with $y$ adjacent to $z$ if and only if $\norm{L\alpha_i(y-z)} \leq CX^{1-k_i}$ for all $i \in I_m$.  This graph has maximal degree at most $D-1$, so (by the greedy algorithm) contains an independent set of vertices $\mathcal{S}$ of size at least $\floor{X}/D$ (as required).
\end{proof}

In order to use Lemma \ref{approximation to spacing lemma} to relate the size of $f(\valpha)$ to the simultaneous Diophantine approximation of \emph{all} the $\alpha_i$, including those with $k_i =1$, we must utilise major arc information.  This necessitates the discussion of the standard major arc auxiliary approximation to $f(\valpha)$.  

\begin{definition}   Define 
$$
S(q, \va) = \sum_{\vz \in [q]^2}e\brac{q^{-1}\va \cdot \vF(\vz)},\quad I(\vbeta; X) = \int_{[0,X]^2}e\brac{\vbeta\cdot \vF(\vgamma)} \intd\vgamma
$$
and
$$
V(\valpha; q, \va) = q^{-2} S(q, \va) I(\valpha - \va/q; X).
$$
\end{definition}

The following three results, which bound $S(q, \va)$, $I(\vbeta; X)$ and the difference $f(\valpha) - V(\valpha; q, a)$, prove useful both in this section and the next.

  \begin{lemma}\label{complete sum bound}
  Let $q \in \N$ and $\va \in \Z^N$.  Then for any $\eps > 0$ we have
  \begin{equation}\label{complete sum bound equation}
  S(q, \va) \ll_\eps (q, \va)^{2} q^{2 - \recip{k} + \eps}.
  \end{equation}
  \end{lemma}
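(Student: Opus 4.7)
The plan is to follow the classical three-step strategy for complete exponential sums associated to forms: (i) reduce to the coprime case via homogeneity, (ii) use CRT-based multiplicativity to reduce to prime-power moduli, and (iii) handle prime powers by a $p$-adic iterative argument. This mirrors the treatment of complete sums for polynomials in, e.g., Vaughan's \emph{Hardy--Littlewood Method}, Chapter 7.

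First I would set $d = (q, \va)$, $q = d q_0$, $\va = d \va_0$. Since $e(q^{-1} \va \cdot \vF(\vz)) = e(q_0^{-1} \va_0 \cdot \vF(\vz))$, the summand is $q_0$-periodic in $\vz$, giving $S(q, \va) = d^2 S(q_0, \va_0)$ with $(q_0, \va_0) = 1$. It therefore suffices to prove $|S(q, \va)| \ll_\eps q^{2 - 1/k + \eps}$ under the coprimality assumption $(q, \va) = 1$.

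Next, for coprime $q_1, q_2$, I would parametrise $\vz \bmod q_1 q_2$ as $\vz = q_2 \vv + q_1 \vu$ with $\vv \in [q_1]^2$, $\vu \in [q_2]^2$. Since each $F_i$ is a form of positive degree, binomial expansion gives
\[
F_i(q_2 \vv + q_1 \vu) \equiv q_2^{k_i} F_i(\vv) + q_1^{k_i} F_i(\vu) \pmod{q_1 q_2},
\]
the intermediate cross terms being divisible by $q_1 q_2$. This yields a product formula $S(q_1 q_2, \va) = S(q_1, \vb^{(1)}) S(q_2, \vb^{(2)})$ with $b^{(1)}_i = a_i q_2^{k_i - 1}$ and similarly for $\vb^{(2)}$; since $(q_1, q_2) = 1$, the coprimality $(q_1, \vb^{(1)}) = (q_1, \va) = 1$ is preserved. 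Thus it suffices to prove the bound at prime powers $q = p^h$ with $(p, \va) = 1$.

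For $q = p^h$, I would write $\vz = \vx + p^t \vy$ with $t = \lceil h/2 \rceil$, $\vx \in [p^t]^2$ and $\vy \in [p^{h-t}]^2$. Taylor expansion together with $2t \geq h$ gives $F_i(\vx + p^t \vy) \equiv F_i(\vx) + p^t (\nabla F_i)(\vx) \cdot \vy \pmod{p^h}$, so the phase is linear in $\vy$ and collapsing the $\vy$-sum via orthogonality reduces matters to counting $\vx \in [p^t]^2$ satisfying $\nabla(\va \cdot \vF)(\vx) \equiv \mathbf{0} \pmod{p^{h-t}}$. Using $(p, \va) = 1$ together with the non-degeneracy of $\Phi$, at least one component of $\nabla(\va \cdot \vF)$ has a non-vanishing degree-$(k-1)$ part modulo $p$; Lemma \ref{generalised lagrange}, applied after a linear change of variables to expose a leading coefficient coprime to $p$, then bounds the number of such $\vx$ by $O_\eps(p^{2t - (h-t)/k + \eps})$, which combined with the $\vy$-contribution $p^{2(h-t)}$ delivers the desired $O_\eps(p^{h(2-1/k) + \eps h})$.

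The hardest step is the prime-power counting: I need to verify that the gradient of $\va \cdot \vF$ does retain a non-trivial degree-$(k-1)$ component modulo $p$ even when all $a_i$ with $k_i = k$ happen to be divisible by $p$. In that case, the effective degree of $\va \cdot \vF$ drops to some $k' < k$, and one reruns the same argument at the reduced degree (where, by non-degeneracy and Lemma \ref{no one dim space}, the leading form is still genuinely two-variable, permitting the required linear change of variables). The exponent accounting through the tower of iterations for $h \gg 1$ is routine once the base case is in hand.
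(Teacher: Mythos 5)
Your steps (i) and (ii) are sound: the reduction to $(q,\va)=1$ is exactly the paper's opening move, and the CRT multiplicativity with the twisted coefficients $b^{(1)}_i=a_iq_2^{k_i-1}$ is correct and preserves coprimality. Be aware, though, that the paper does not argue this way at all: it rewrites $\va\cdot\vF(\vx)=\sum_{\vi}(\va B)_{\vi}\,\vx^{\vi}$ for an integer matrix $B$ of full row-rank, invokes the general Arkhipov--Karatsuba--Chubarikov bound for complete rational sums of two-variable polynomials whose coefficient vector is coprime to the modulus, and then checks $(\va B,q)=O_\Phi(1)$ via a rational right inverse of $B$. Consequently all of the real difficulty in your plan sits exactly where you have left it unproved: the prime-power estimate, which is precisely the content of the lemma the paper cites.

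The gap in step (iii) is concrete. With $t=\lceil h/2\rceil$ your linearization gives $|S(p^h,\va)|\le p^{2(h-t)}N$, where $N$ counts $\vx\in[p^t]^2$ with $\nabla(\va\cdot\vF)(\vx)\equiv \mathbf{0}\pmod{p^{h-t}}$; even granting your claimed bound $N\ll_\eps p^{2t-(h-t)/k+\eps}$, the product is $p^{2h-(h-t)/k+\eps}\approx p^{2h-h/(2k)+\eps}$, a saving of only $1/(2k)$, not the required $1/k$. This loss is intrinsic to a single halving step: in the model situation where $\va\cdot\vF$ behaves like $x^k$ modulo $p$ one has $N\approx p^{2t-(h-t)/(k-1)}$, and the resulting bound $p^{2h-h/(2(k-1))}$ is insufficient for every $k\ge 3$. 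Recovering the full exponent forces a genuine recursion in $h$, stratifying by $p$-adic valuation along the singular locus of the gradient — which in two variables can be a curve modulo $p$ containing $\sim p$ points, so Hensel-type lifting is delicate — together with the degree-drop case, where the top coefficients divisible by $p$ cannot simply be discarded modulo $p^h$, and a separate non-trivial treatment of $h=1$, where your substitution is vacuous and a Weil-type input is needed. None of this is ``routine''; it amounts to reproving the cited lemma. Note also that Lemma \ref{generalised lagrange} is a statement over a field, so it does not by itself bound solutions of a congruence modulo $p^{h-t}$; an additional valuation/stratification argument would be needed even for the count you assert.
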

  
\begin{proof}
Letting $q' = (q, \va)^{-1}q$ and $\va' = (q, \va)^{-1} \va$, we have $S(q, \va) = (q, \va)^2 S(q' , \va')$.  Hence it suffices to assume that $(q, \va) =1$.  Sorting the expression $\va \cdot \vF$ into monomials and using the linear independence of the forms $F_i$, we see that there exists an integer matrix $B$ with full row-rank such that 
$$
\va \cdot \vF(\vx) = \sum_{0 < i_1 + i_2 \leq k} (\va B)_{\vi}\ \vx^{\vi}.
$$
Set $d = (\va B, q)$, $q' = d^{-1}q$ and $\vb' = d^{-1} (\va B)$. Then $(q' , \vb') =1$, so by \cite[Lemma 8, p. 54]{arhipov82}, we have
\begin{align*}
S(q, \va) & = d^2 \sum_{\vx \in [q']^2} e\biggbrac{\recip{q'}\sum_{0 < i_1 + i_2 \leq k} b'_{\vi}\ \vx^{\vi}}\\
		&  \ll_\eps d^2\ (q')^{2 - 1/k + \eps}
\end{align*}
It therefore remains to show that $d \ll_\Phi 1$.  Since $B$ has full row-rank, there exists a rational matrix $B'$ with 
\begin{equation}\label{B' inversion}
B B' = \Bigbrac{ I\ |\ 0},
\end{equation}
where $I$ is the identity matrix.  Clearly there exists a positive integer $m = O_\Phi(1)$ such that $mB'$ has only integer entries.  Hence we have 
 $$(ma_1, \dots, ma_N, 0, \dots, 0) = (\va B)(mB') \equiv 0 \bmod d.$$  So $d$ divides $m a_i $ for all $i$.  Since $d | q$ and $(q, \va) = 1$, we have $d |m$.  Thus $d \ll_\Phi 1$.\end{proof}

\begin{lemma}\label{integral bound}  For any $\eps > 0$ the auxiliary function $I(\vbeta, X)$ satisfies
\begin{equation}\label{product integral bound}
\begin{split}
I(\vbeta; X)  \ll_\eps X^2\Bigbrac{1 + X^{k_1}|\beta_1| + \dots + X^{k_N}|\beta_N|}^{-\recip{k}+\eps}
\end{split}
\end{equation}
\end{lemma}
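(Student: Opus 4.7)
The plan is to reduce the problem to a standard polynomial oscillatory integral estimate via rescaling and a linear-algebraic observation. Substituting $\vgamma = X(u,v)$ in $I(\vbeta; X)$ yields
$$I(\vbeta; X) = X^2 \int_{[0,1]^2} e\Bigbrac{\sum_{i=1}^N \tilde\beta_i F_i(u,v)}\intd u\intd v, \qquad \tilde\beta_i := X^{k_i}\beta_i.$$
Writing $A := \max_i |\tilde\beta_i|$, the trivial bound $|I| \leq X^2$ handles the case $A \leq 1$, so it suffices to show that the normalised integral is $\ll_\eps A^{-1/k + \eps}$ whenever $A \geq 1$. Since $F_1, \dots, F_N$ are linearly independent over $\Q$, the map sending $(\tilde\beta_i) \in \R^N$ to the vector of monomial coefficients of $P(u,v) := \sum_i \tilde\beta_i F_i(u,v)$ has a bounded left inverse, with norm depending only on $\Phi$. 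Consequently there exist $c = c(\Phi) > 0$ and exponents $(a^*, b^*)$ with $a^* + b^* \geq 1$ such that $|[u^{a^*} v^{b^*}] P| \geq cA$; after swapping $u$ and $v$ if necessary, I may assume $a^* \geq 1$.

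It therefore suffices to prove: if $P(u,v) \in \R[u,v]$ has total degree at most $k$ and a non-constant monomial coefficient of modulus at least $A \geq 1$, then $\int_{[0,1]^2} e(P) \intd u\intd v \ll_{k,\eps} A^{-1/k + \eps}$. I would proceed by slicing. Fixing $v$ and applying the classical polynomial van der Corput estimate (the one-dimensional analogue, see Vaughan \cite{vaughan97}) to the polynomial $P(\cdot, v)$ of degree at most $k$ gives
$$\Bigabs{\int_0^1 e(P(u,v)) \intd u} \ll_\eps \bigbrac{1 + |\phi(v)|}^{-1/k + \eps},$$
where $\phi(v) := [u^{a^*}] P(u,v) \in \R[v]$ is a polynomial of degree $\leq k$ whose coefficient of $v^{b^*}$ has modulus $\geq cA$.

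To conclude I would integrate in $v$ using a dyadic decomposition of $[0,1]$ according to the size of $|\phi(v)|$. The key input is a Remez-type estimate: for any $\phi \in \R[v]$ of degree $\leq k$ with some coefficient of modulus at least $A$, $|\{v \in [0,1] : |\phi(v)| \leq t\}| \ll_k (t/A)^{1/k}$ for every $t \geq 0$. Splitting into dyadic ranges $|\phi(v)| \asymp 2^j$ then gives
$$\int_0^1 \bigbrac{1 + |\phi(v)|}^{-1/k + \eps} \intd v \ll \sum_{0 \leq j \ll \log A} 2^{(-1/k + \eps)j}(2^j/A)^{1/k} \ll A^{-1/k + \eps},$$
the final bound arising because the geometric series in $j$ contributes only an $A^\eps$ factor. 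The main obstacle is the slicing step itself: one must keep track of how the large monomial coefficient of $P(u,v)$ descends to a large coefficient of the inner polynomial $\phi(v)$, which is precisely what the choice of $(a^*, b^*)$ with $a^* \geq 1$ ensures. Everything else is either routine manipulation or an immediate consequence of the linear independence of the $F_i$.
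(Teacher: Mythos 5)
Your argument is correct, and its first half coincides with the paper's: both proofs rescale via $\vgamma = X(u,v)$ and use the linear independence of the $F_i$ (in the paper, the coefficient matrix $B$ with $BB' = (I\,|\,0)$; in your write-up, a bounded left inverse) to conclude that some non-constant monomial coefficient of the resulting degree-$k$ polynomial on $[0,1]^2$ has modulus $\gg_\Phi \max_i X^{k_i}|\beta_i|$. The divergence is in what happens next: the paper simply quotes the two-dimensional oscillatory-integral lemma of Arhipov--Karatsuba--Chubarikov (\cite[Lemma 2, p.~50]{arhipov82}), which asserts exactly the bound $\ll_\eps A^{-1/k+\eps}$ in terms of the largest non-constant monomial coefficient $A$, whereas you reprove that lemma from scratch by slicing: a one-dimensional estimate for fixed $v$, then a Remez-type sublevel bound and a dyadic decomposition in $v$. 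One small caveat on attribution: the one-dimensional input you need is the bound in terms of an arbitrary non-constant coefficient (Vinogradov's lemma, i.e.\ the one-dimensional case of the same ACK estimate), not the bare $k$-th derivative van der Corput test, which only sees the leading coefficient; but this stronger form is classical and in any case follows from the first-derivative test combined with the sublevel estimate you already invoke, so there is no gap. The trade-off is clear: your route is self-contained and uses only two standard one-variable facts, at the cost of length, while the paper's route is shorter by outsourcing the entire two-dimensional estimate; your handling of the edge cases ($A \leq 1$ via the trivial bound, the swap of $u$ and $v$ to force $a^* \geq 1$, and the truncation of the dyadic sum, justified since all monomial coefficients, hence $\phi$ on $[0,1]$, are $O_\Phi(A)$) is sound.
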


\begin{proof}
Let $B$ be the matrix in the proof of Lemma \ref{complete sum bound}.  Changing variables in the integral $I(\vbeta; X)$ gives
\begin{equation}\label{I variable change}
I(\vbeta; X)  = X^2\int_0^1\int_0^1 e\biggbrac{\ \sum_{0 < i_1 + i_2 \leq k} X^{i_1 + i_2}(\vbeta B)_{\vi}\ \vgamma^{\vi}} \intd \vgamma.
\end{equation}
Let $\beta_i(X) = X^{k_i} \beta_i$, and $\alpha_{\vi} = (\vbeta(X) \cdot B)_{\vi} = X^{i_1 + i_2} (\vbeta B)_{\vi}$.  Then we can apply \cite[Lemma 2, p. 50]{arhipov82} to the double integral in \eqref{I variable change} to obtain
\begin{equation}
I(\vbeta; X) \ll_\eps X^2 \min\bigset{ 1 ,\  \abs{\valpha}_\infty^{-1/k + \eps}},
\end{equation}
where $\abs{\valpha}_\infty = \max_{\vi} |\alpha_{\vi}|$.  Using \eqref{B' inversion}, we have $\abs{\vbeta(X)}_\infty \ll_\Phi \abs{\valpha}_\infty$.  The result now follows.
\end{proof}

\begin{lemma}\label{generating asymptotic}  Let $q$ be a positive integer.  Then for any $\va \in \Z^n$ and $\valpha \in \T^n$
\begin{equation}\label{generating asymptotic formula}
f(\valpha) - V(\valpha; q , \va) \ll X\brac{q + \sum_{i=1}^n |q\alpha_i- a_i |X^{k_i }}.
\end{equation} 
\end{lemma}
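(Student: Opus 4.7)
The plan is to carry out the standard circle-method comparison: split $f(\valpha)$ into residue classes modulo $q$ and then approximate the resulting inner sums by a Fourier integral. Writing $\vbeta = \valpha - \va/q$, I would decompose
$$
f(\valpha) = \sum_{\vz \in [q]^2} \twosum{\vy \in \Z^2}{\vz + q\vy \in [X]^2} e\bigbrac{\valpha\cdot \vF(\vz + q\vy)}.
$$
Since each $F_i$ is a partial derivative of $\Phi \in \Z[x,y]$ and hence lies in $\Z[x,y]$, expansion of $F_i(\vz + q\vy)$ gives $\vF(\vz + q\vy) \equiv \vF(\vz) \bmod q$, so the phase $e\bigbrac{q^{-1}\va\cdot \vF(\vz + q\vy)}$ reduces to $e\bigbrac{q^{-1}\va\cdot \vF(\vz)}$. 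This factorises $f(\valpha)$ as $\sum_{\vz} e\bigbrac{q^{-1}\va\cdot \vF(\vz)}\,T(\vz)$, where $T(\vz) = \sum_\vy e\bigbrac{\vbeta \cdot \vF(\vz + q\vy)}$, putting the whole question on the shoulders of the inner sum.

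Next I would compare $T(\vz)$ with its continuous analogue $T^*(\vz) = \int_{R_\vz} e\bigbrac{\vbeta\cdot \vF(\vz + q\vgamma)}\intd \vgamma$, where $R_\vz$ is the (integer-cornered) rectangle of side lengths $O(X/q)$ traversed by $\vy$. The standard two-dimensional sum-to-integral comparison yields
$$
|T(\vz) - T^*(\vz)| \ll \tfrac{X}{q} + \Bigbrac{\tfrac{X}{q}}^2 \max_{\vgamma \in R_\vz} \Bigabs{\nabla_{\vgamma} e\bigbrac{\vbeta\cdot \vF(\vz + q\vgamma)}},
$$
and the chain rule, together with the trivial bound $|\partial_{x_m} F_j(\vx)| \ll X^{k_j - 1}$ on $[0,X]^2$, controls the gradient by $q\sum_j |\beta_j| X^{k_j - 1}$. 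This gives
$$
|T(\vz) - T^*(\vz)| \ll X/q + X q^{-1}\sum_{j=1}^N |\beta_j| X^{k_j},
$$
uniformly in $\vz$. Separately, the substitution $\vgamma' = \vz + q\vgamma$ shows that $\sum_\vz e(q^{-1}\va\cdot \vF(\vz))T^*(\vz)$ differs from $V(\valpha;q,\va) = q^{-2}S(q,\va)I(\vbeta;X)$ only by a boundary discrepancy arising from replacing $[X]^2$ with $[0,X]^2$; the integrand being bounded by $1$, this is $O(X)$ after summing over the $q^2$ residues.

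Combining these estimates and summing the first bound over $\vz \in [q]^2$ (where the phase is bounded by $1$), one obtains
$$
|f(\valpha) - V(\valpha; q, \va)| \ll qX + q\sum_{j=1}^N |\beta_j| X^{k_j + 1},
$$
and the identity $q|\beta_j| = |q\alpha_j - a_j|$ then rewrites the right-hand side as $X\brac{q + \sum_j |q\alpha_j - a_j| X^{k_j}}$, which is the desired bound. The only point requiring genuine care is the two-dimensional sum-to-integral estimate, which is an elementary Euler--Maclaurin / Koksma--Hlawka type computation on the integer-cornered rectangle $R_\vz$; all other steps are essentially bookkeeping, with the integer-coefficient reduction modulo $q$ providing the clean factorisation on which the entire argument rests.
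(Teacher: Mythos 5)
Your argument is correct and essentially the paper's own proof: sort $f(\valpha)$ into residue classes modulo $q$ (using that $\vF$ has integer coefficients to extract the complete sum $S(q,\va)$), replace each inner sum by an integral via a mean-value/gradient bound of size $\sum_i |q\beta_i|X^{k_i-1}$ per lattice point, and absorb the mismatch between the resulting integration ranges and $[0,X]^2$ using that the ranges differ from $X$ by at most $q$. The only slip is your claim that this boundary discrepancy is $O(X)$ after summing over the $q^2$ residues --- each class contributes strips of measure $O(qX)$ weighted by $q^{-2}$, so the total is $O(qX)$ --- but this is harmless, since the bound of the lemma already contains the term $qX$.
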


\begin{proof}
Write $\valpha = \va/q + \vbeta$.  Sorting the sum $f(\valpha)$ into a sum over congruence classes modulo $q$, we have
 \begin{equation}\label{f_c identity}
f(\valpha) = \sum_{\vr \in [q]^2}e\brac{\va \cdot \vF(\vr)/q} \sum_{0\leq y_1 \leq \frac{X-r_1}{q}}\ \sum_{0\leq y_2 \leq \frac{X-r_2}{q}} e\brac{\vbeta \cdot \vF(q\vy+\vr)}.
\end{equation}
By the mean value inequality we have
$$
e\brac{\vbeta \cdot \vF(q\vy + \vr)} - q^{-2}\int_{qy_1}^{q(y_1+1)}\int_{qy_2}^{q(y_2+1)}e\brac{ \vbeta \cdot \vF( \vgamma)}\intd\vgamma \ll  \sum_{i=1}^N |q\beta_i|X^{k_i -1}.
$$
Summing over $\vr$ and $\vy$ shows that $f(\valpha)$ equals
\begin{equation}\label{halfway formula}
 q^{-2}\sum_{\vr \in [q]^2}e\brac{\va \cdot \vF(\vr)/q} \int_{0}^{X(r_1)}\int_{0}^{X(r_2)} e\brac{\vbeta \cdot \vF(\vgamma)}\intd\vgamma + \Oh{ X\sum_{i=1}^N |q\beta_i|X^{k_i }},
\end{equation}
where $X(r) = q\brac{\floor{\frac{X- r}{q}} +1}$.  Using the fact that $|X - X(r)| \leq q$ for all $r\in [q]$, we see that \eqref{halfway formula} is equal to
$$
 q^{-2}\sum_{\vr \in [q]^2}e\brac{\va \cdot \vF(\vr)/q} \int_{0}^{X}\int_{0}^{X} e\brac{\vbeta \cdot \vF(\vgamma)}\intd\vgamma + \Oh{ Xq + X\sum_{i=1}^N |q\beta_i|X^{k_i }},
$$
as required.
\end{proof}

With these bounds in hand, we are able to prove the theorem advertised at the start of this section.

\begin{proof}[Proof of the Theorem \ref{minor arc estimate theorem}]
Let $\tau = 2s\sigma + \Delta_s + \eps_1$, with $\eps_1$ sufficiently small (to be determined later).  The result is vacuous if $\sigma \leq 0$, so we may assume that $\Delta_s \leq 1/3$.  It then follows that $\tau < 1$.  By Dirichlet's principle, for each $i$ with $k_i \geq 2$ we can find co-prime integers $b_i, q_i$ with $1 \leq q_i \leq X^{k_i - \tau }$ and 
\begin{equation}\label{dirichlet approx}
|\alpha_i - b_i/q_i| \leq q_i^{-1} X^{\tau - k_i }.
\end{equation}
Let $C_1$ be the absolute constant in Lemma \ref{approximation to spacing lemma}.  Using \eqref{dirichlet approx}, notice that if $y, z \in [X]$ satisfy \eqref{single coefficient spacing bound}, then we have
\begin{equation}\label{hungry label}
\norm{L(y-z)b_i/q_i} \leq C_1X^{1-k_i} + LX^{1+\tau - k_i } q_i^{-1}.
\end{equation}
For each choice $y\in [X]$, the number of residue classes modulo $q_i$ containing some $z \in [X]$ satisfying \eqref{hungry label} is at most  $C_1X^{1-k_i} q_i + LX^{1+ \tau - k_i } +1$.  Let $D$ denote the maximum, over all $y \in [X]$, for the number of choices for $z \in [X]$ satisfying \eqref{single coefficient spacing bound}.  Then we have
\begin{align*}
D  & \leq (C_1X^{1-k_i} q_i + LX^{1+ \tau  - k_i} +1)(q_i^{-1} LX + 1)\\
		& \ll q_iX^{1- k_i} + Xq_i^{-1} + 1.
\end{align*}
Using Lemma \ref{approximation to spacing lemma}, we see that 
$$
|f(\valpha)| \ll X^2 \log(2X)^2 \brac{ X^{\Delta}(q_i X^{-k_i} + q_i^{-1} + X^{-1})}^{1/(2s)}.
$$
By the lower bound \eqref{f lower bound}, we have
\begin{align*}
X^{-2s\sigma - \Delta_s} & \ll (q_i X^{-k_i} + q_i^{-1} + X^{-1})\log(2X)^{4s}\\
		& \ll (X^{- \tau} + q_i^{-1})\log(2X)^{4s}.
\end{align*}
Since $ \tau> 2s\sigma + \Delta_s$, we must have
\begin{equation}
q_i \ll X^{2s \sigma + \Delta_s} \log(2X)^{4s}.
\end{equation}
Since $1 > 2s\sigma + \Delta_s$, this implies that the right-hand side of \eqref{hungry label} is strictly less than $q_i^{-1}$ (provided  $X \gg_\Phi 1$).  It follows that \eqref{hungry label} implies  $q_i | L(y-z)b_i$, which in turn implies $q_i | L(y-z)$, since $(q_i, b_i) =1$.  Hence it follows from the assumption \eqref{single coefficient spacing bound}  that $q_i | L(y-z)$ for all $i \in I_m$.  Let $Q_m$ denote the lowest common multiple of the set $\set{q_i : i \in I_m}$.  Then the number $D$ satisfies $D \ll XQ_m^{-1} + 1$.  Utilising Lemma \ref{approximation to spacing lemma} again, we obtain the bound
\begin{equation}\label{Q_m bound}
Q_m \ll X^{2s \sigma + \Delta_s} \log(2X)^{4s}.
\end{equation}
Let $Q = [Q_1, Q_2]$, so that $Q \ll X^{4s \sigma + 2\Delta_s} \log(2X)^{8s}$ by \eqref{Q_m bound}.  Since $\sigma$ is strictly less than $\tfrac{1- 3\Delta_s}{6s +3}$, we can (on taking $\epsilon_1$ sufficiently small) find a real $\mu$ satisfying
\begin{equation}\label{mu size}
\sigma < \mu < \trecip{2}\brac{1 - (4s + 1)\sigma - 2 \Delta - \tau }.
\end{equation}
As the space of linear binary homogeneous polynomials has dimension 2, there are at most two indices $i$ with $k_i =1$.  We can therefore use Dirichlet's principle to find a positive integer $1 \leq t \leq X^{2\mu}$, along with $a_i \in \Z$ $(k_i = 1)$ which are together co-prime to $t$ and such that 
\begin{equation}
|t(Q\alpha_i) - a_i| \leq X^{- \mu} \quad (k_i =1).
\end{equation}
Set $q = tQ$.  For $i$ with $k_i \geq 2$, let us define $a_i = (q/q_i)b_i$.  Then the $N$-tuple $\va = (a_1, \dots, a_N)$ satisfies $(q, \va) =1$ and 
\begin{equation}
|q \alpha_i - a_i| \ll \begin{cases} X^{- \mu} & (k_i =1),\\ X^{2\mu + 4s\sigma + 2 \Delta_s + \tau  - k_i}\log(2X)^{4s} & (k_i \geq 2). \end{cases}
\end{equation}
It thus follows from Lemma \ref{generating asymptotic} and \eqref{mu size} that
\begin{align*}
|f(\valpha) - V(\valpha; q, \va)| & \ll X^{2 - \mu} + X^{1 + 2\mu + 4s\sigma + 2\Delta_s + \tau}\log(2X)^{4s}\\
						& = o\Bigbrac{X^{2 - \sigma}}.
\end{align*}
Hence by the lower bound \eqref{f lower bound}, we have $|V(\valpha; q, \va)| \gg X^{2 - \sigma}$.  Combining this, together with Lemma \ref{complete sum bound} and Lemma \ref{integral bound} , we see that for any $\eps > 0$ we have
$$
q + \sum_{i=1}^N |q \alpha_i - a_i|X^{k_i} \ll X^{k\sigma + \frac{\eps}{2}}.
$$
Taking $X$ sufficiently large (in terms of $s$, $\eps$ and $\Phi$), we obtain the theorem.
\end{proof}

\section{The Asymptotic Formula}

In order to prove our density result, Theorem \ref{intro result 1}, we need to estimate the number of solutions to \eqref{non-singular solution equation 2} when the variables $\vx_j$ are restricted to the box $[X]^2$.

 \begin{definition}  Given a finite set $A \subset \Z^2$, write $R_{\vc, \Phi}(A)$ for the number of tuples $(\vx_1, \dots, \vx_s)$ in the set $A^s$ satisfying
 \begin{equation}
c_1 \Phi^{u,v}(\vx_1) + \dots + c_s \Phi^{u,v}(\vx_s) = 0 \quad (0 \leq u+v < k).
\end{equation}
When $A = [X]^2$, we simply write $R_{\vc, \Phi}(X)$.
\end{definition} 

The Hardy--Littlewood method gives an asymptotic for $R_{\vc, \Phi}(X)$, an asymptotic whose main term is a product of local densities, which we now define. 
\begin{definition}  Let $\Phi$ denote a binary form of degree $k$, differential dimension $N$ and differential degree $K$.  Let $\set{F_1, \dots, F_N}$ denote a maximal linearly independent subset of $\set{\Phi^{u,v} : 0 \leq u+v < k}$.  When $T > 0$, define $\lambda_T(y) = T\max\set{0, 1 - T|y|}$ and 
$$
\mu_{T}= \mu_T(\vc) = \int_{[0,1]^{2s}} \lambda_T\Bigbrac{\sum_{j=1}^s c_j F_1(\vgamma_j)} \dotsm \lambda_T\Bigbrac{\sum_{j=1}^s c_j F_N(\vgamma_j)}\intd \vgamma
$$
The limit $\sigma_\infty = \sigma_\infty(\vc) = \lim_{T \to \infty} \mu_{T}$, when it exists, is called the \emph{real density}.  Given a natural number $q$, we write
$$
M(q) = M(q; \vc) = \hash\Bigset{ \vx \in (\Z/q\Z)^{2s} : \sum_{j=1}^s c_j \vF(\vx_j) \equiv 0 \pmod q}.
$$ For each prime $p$, the limit
\begin{equation}\label{T(p) relation}
\sigma_p(\vc) = \lim_{H \to \infty} p^{- H(2s-N)} M(p^H),
\end{equation}
provided it exists, is called the \emph{$p$-adic density}.
\end{definition}

The purpose of this section is to prove the following asymptotic formula. 

\begin{theorem}\label{asymptotic formula}
Let $\Phi$ be a non-degenerate binary form of degree $k$, differential dimension $N$ and differential degree $K$. Suppose that
\begin{equation}\label{number of variables}
s \geq kN(\log K + \log \log K + 26).
\end{equation}
Then there exists $\delta > 0$ such that for any choice of non-zero integers $c_1, \dots, c_s$ we have
\begin{equation}\label{non legit asymp}
R_{\vc, \Phi}(X) = \sigma_\infty\Bigbrac{\prod_p \sigma_p} X^{2s - K} + O(X^{2s - K - \delta})
\end{equation}
Suppose in addition that $\vc$ is a non-singular choice of coefficients for $\Phi$.  Then
\begin{equation}
 \sigma_\infty \prod_p \sigma_p > 0.
\end{equation}
\end{theorem}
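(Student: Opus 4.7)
The plan is to apply the Hardy--Littlewood method.  By orthogonality, together with the fact that $\set{F_1, \dots, F_N}$ spans the same space as $\set{\Phi^{u,v} : 0 \leq u+v < k}$, we have
\[
R_{\vc, \Phi}(X) = \oint \prod_{j=1}^s f(c_j \valpha; X)\, \intd\valpha.
\]
I would decompose $\T^N$ into major arcs $\mathfrak{M}$ consisting of those $\valpha$ for which there exist coprime $(q, \va)$ with $1 \leq q \leq X^{k\eta}$ and $|q\alpha_i - a_i| \leq X^{k\eta - k_i}$ for all $i$, enlarged by a factor of $\max_j |c_j|$ so that $c_j \valpha$ lies outside the major arcs of Theorem \ref{minor arc estimate theorem} whenever $\valpha \in \mathfrak{m} = \T^N \setminus \mathfrak{M}$.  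Here $\eta > 0$ is a small parameter to be chosen.

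On the minor arcs, choose $s_0 \leq s/2$ maximal subject to $\Delta_{s_0} < 1/6$, and take $\sigma > 0$ strictly less than $(1 - 3\Delta_{s_0})/(6s_0 + 3)$.  Theorem \ref{minor arc estimate theorem} then delivers $|f(c_j \valpha; X)| \ll X^{2 - \sigma}$ uniformly for $\valpha \in \mathfrak{m}$ and every $j$.  By H\"older's inequality,
\[
\int_{\mathfrak{m}} \prod_{j=1}^s |f(c_j\valpha)|\, \intd\valpha \leq \prod_{j=1}^s \Bigbrac{\int_{\mathfrak{m}} |f(c_j\valpha)|^s\, \intd\valpha}^{1/s}.
\]
Splitting $|f(c_j\valpha)|^s = |f(c_j\valpha)|^{s - 2s_0} \cdot |f(c_j\valpha)|^{2s_0}$ and pulling the Weyl bound out of the first factor gives
\[
\int_{\mathfrak{m}} |f(c_j\valpha)|^s\, \intd\valpha \ll X^{(2-\sigma)(s-2s_0)} \int_{\T^N} |f(c_j\valpha)|^{2s_0}\, \intd\valpha = X^{(2-\sigma)(s-2s_0)} J_{s_0, \Phi}(X),
\]
where the final equality uses the measure-preserving $|c_j|^N$-to-$1$ substitution $c_j\valpha \mapsto \valpha$.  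Combining with Theorem \ref{intro VMVT} yields the minor arc bound
\[
\int_{\mathfrak{m}} \prod_j |f(c_j\valpha)|\, \intd\valpha \ll X^{2s - K + \Delta_{s_0} - \sigma(s - 2s_0)}.
\]
The main obstacle is ensuring $\sigma(s - 2s_0) > \Delta_{s_0} + \delta$ for some absolute $\delta > 0$: by \eqref{exponent decay}, $\Delta_{s_0} \ll K(1 - 1/k)^{\lfloor s_0/M \rfloor}$, so reducing it below a prescribed constant requires $s_0/M \gtrsim k \log K$, and the accompanying requirement on $s - 2s_0$ (since $\sigma \approx 1/s$) forces the hypothesis $s \geq kN(\log K + \log\log K + 26)$ on invoking $N \leq 2M$.

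On the major arcs, Lemma \ref{generating asymptotic} approximates $f(c_j\valpha; X) = V(c_j \valpha; q, c_j \va) + O(X^{1 + 2k\eta})$ uniformly on $\mathfrak{M}(q, \va)$.  Expanding the product $\prod_j (V + \text{error})$ and controlling each cross term by Lemmas \ref{complete sum bound} and \ref{integral bound} furnishes
\[
\int_{\mathfrak{M}} \prod_j f(c_j\valpha)\, \intd\valpha = \mathfrak{S}(X^{k\eta}) \cdot \mathfrak{J}(X^{k\eta}) \cdot X^{2s - K} + O(X^{2s - K - \delta}),
\]
where $\mathfrak{S}(Q) = \sum_{q \leq Q} q^{-2s} \sum_{\va}^{\ast} \prod_j S(q, c_j \va)$ is a truncated singular series and $\mathfrak{J}(Q)$ is a truncated normalised singular integral.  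Multiplicativity of $q \mapsto q^{-2s}\sum_\va^\ast \prod_j S(q, c_j\va)$ together with the bound $|S(q, \va)| \ll_\eps (q, \va)^2 q^{2 - 1/k + \eps}$ from Lemma \ref{complete sum bound} shows that the local factor at each prime $p$ satisfies $T(p) = 1 + O(p^{-1 - \delta})$ provided $s$ satisfies \eqref{number of variables}, whence $\mathfrak{S}(Q) \to \prod_p \sigma_p$ with polynomial rate; Lemma \ref{integral bound} handles $\mathfrak{J}(Q) \to \sigma_\infty$ analogously.

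To establish $\sigma_\infty \prod_p \sigma_p > 0$ under the non-singular coefficients hypothesis, the non-singular real solution to $\sum_j c_j \vF(\vx_j) = \mathbf{0}$ allows an application of the implicit function theorem to produce a $(2s - N)$-dimensional submanifold of solutions in a neighbourhood of which $\mu_T \gg 1$ uniformly in $T$, whence $\sigma_\infty > 0$.  Similarly, Hensel's lemma applied to each non-singular $p$-adic solution lifts it to a cylinder of positive $p$-adic measure, giving $\sigma_p > 0$ for every $p$.  Combined with the estimate $\sigma_p = 1 + O(p^{-1 - \delta})$ for all sufficiently large $p$ (from the preceding analysis), this shows that $\prod_p \sigma_p$ converges to a strictly positive limit and completes the proof.
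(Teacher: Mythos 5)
Your skeleton is the same as the paper's: orthogonality, a major/minor arc dissection in which membership of the minor arcs, via Theorem \ref{minor arc estimate theorem}, forces $|f(c_j\valpha)|\le X^{2-\sigma}$; a pointwise Weyl saving on some variables plus Theorem \ref{intro VMVT} on the rest for the minor arcs; Lemma \ref{generating asymptotic} together with Lemmas \ref{complete sum bound} and \ref{integral bound} on the major arcs; and positivity of the local densities from the non-singular real and $p$-adic solutions via an inverse/implicit function theorem argument and a Hensel lifting. The genuine gap is in your minor-arc bookkeeping. You use a single parameter $s_0$, required to satisfy $\Delta_{s_0}<\trecip{6}$, both to define $\sigma<(1-3\Delta_{s_0})/(6s_0+3)$ and as the exponent in the mean value $J_{s_0,\Phi}(X)$, and you then need $\sigma(s-2s_0)>\Delta_{s_0}+\delta$. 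As stated (``$s_0\le s/2$ maximal'') this fails outright: $\Delta_{s_0}$ decreases in $s_0$, so the maximal admissible choice is $s_0=\floor{s/2}$, giving $s-2s_0\le 1$ and no power saving at all. If instead $s_0$ is taken just large enough that $\Delta_{s_0}<\trecip{6}$, then $s_0\approx Mk\log(6K)$, $\sigma\approx 1/(12s_0)$ and $\Delta_{s_0}$ really is of order $\trecip{6}$, so the requirement forces $s-2s_0\gg s_0$, i.e.\ $s\gg 2kN\log K$, which is \emph{not} implied by \eqref{number of variables} for large $K$; and your parenthetical ``$\sigma\approx 1/s$'' is not the right scaling, since $\sigma$ is governed by the parameter fed into Theorem \ref{minor arc estimate theorem}, not by the total number of variables.

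The missing idea is that the admissible exponent used for the mean value must be pushed down to size about $1/\log K$, not merely below a fixed constant, so that the number of variables carrying the pointwise saving, roughly $\sigma^{-1}\Delta$, is only $O(kN)$; this is precisely where the $\log\log K$ in \eqref{number of variables} originates. The paper (Lemma \ref{minor arc integral lemma}) achieves this by decoupling the two roles: a fixed $s_0=M\ceil{k\log(21K)}$ with $\Delta_{s_0}<\trecip{21}$ defines $\sigma\approx 1/(7s_0)$ in Theorem \ref{minor arc estimate theorem}, while $t=\ceil{k\log(K\log K)}$, for which $\Delta_{tM}\le Ke^{-t/k}\le 1/\log K$, governs the mean value over the remaining $2tM$ variables; then $r\approx\sigma^{-1}\Delta_{tM}\le 22kN$ and $r+2tM\le kN(\log K+\log\log K+26)$. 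With that repair your minor-arc argument runs exactly as in the paper. Two smaller points: the cross terms in your major-arc expansion contain factors of $f$ as well as $V$, so bounding them also needs Theorem \ref{intro VMVT} (via H\"older against $\oint|f(\valpha)|^{2tM}\intd\valpha$), not only Lemmas \ref{complete sum bound} and \ref{integral bound}; and identifying the limit of your truncated singular integral with $\sigma_\infty$ as defined through the kernels $\lambda_T$ requires the Fej\'er-kernel (Schmidt) computation carried out in the paper, which your sketch elides.
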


\begin{remark}
The $O(1)$ constant in \eqref{number of variables} can certainly be lowered from 26 if one is willing to implement the results of \S \ref{Weyl-type section} more optimally.
\end{remark}

The proof of Theorem \ref{asymptotic formula} proceeds by the usual Hardy--Littlewood dissection into major and minor arcs. 

\begin{definition}
Given a tuple of integers $\va = (a_1, \dots, a_N)$ and $q \in \N$, define the \emph{major arc} centred at $\va/q$ to be the set
$$
\M(q, \va) = \Bigset{ \valpha \in \T^N : \norm{\alpha_i - a_i/q} \leq q^{-1}X^{\recip{4} - k_i}\quad (1 \leq i \leq N)}.
$$
Define the \emph{major arcs} $\M$ to be the union of the sets $\M(q,\va)$ with $1 \leq q \leq X^{1/4}$ and $\va \in [q]^n$ subject to $(q, \va) = 1$.  Define the \emph{minor arcs} to be the complement $\m = \T^N \setminus \M$.
\end{definition}

One can show that for $X \gg 1$ the major arcs are disjoint.  We can therefore define the function $V(\valpha)$ to equal $V(\valpha; q, \va)$ when $\valpha \in \M(q, \va) \subset \M$, and equal $0$ otherwise.

\begin{lemma}\label{V major arc asymptotic}  Whenever $s \geq k(N+1) +1$ there exists $\delta > 0$ such that 
\begin{equation}
\int_\M V(c_1\valpha) \dotsm V(c_s\valpha) \intd\valpha = \mathfrak{J} \mathfrak{S} X^{2s - K} +O( X^{2s - K - \delta}),
\end{equation}
where
\begin{equation}
\mathfrak{J} = \mathfrak{J}(\vc)= \int_{\R^N} \int_{[0,1]^{2s}} e\Bigbrac{ \vbeta \cdot \sum_{j=1}^s c_j\vF(\vgamma_j)} \intd\vgamma \intd\vbeta
\end{equation}
and
\begin{equation}
\mathfrak{S} =\mathfrak{S}(\vc) = \sum_{q=1}^\infty q^{-2s}\twosum{\va \in [q]^N}{(q, \va) =1}  S(q,c_1\va) \dotsm S(q, c_s\va).
\end{equation}
\end{lemma}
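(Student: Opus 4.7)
The approach is the standard Hardy--Littlewood major arc computation. Since the major arcs $\M(q,\va)$ are pairwise disjoint for $X$ sufficiently large, I would first split
\begin{equation*}
\int_\M \prod_{j=1}^s V(c_j\valpha)\intd\valpha = \sum_{q\leq X^{1/4}}\twosum{\va\in[q]^N}{(q,\va)=1}\int_{\M(q,\va)}\prod_{j=1}^s V(c_j\valpha)\intd\valpha,
\end{equation*}
and substitute $\valpha = \va/q + \vbeta$ on each arc. Using the scaling identity $S(dq, d\va) = d^2 S(q, \va)$ (established within the proof of Lemma \ref{complete sum bound}), the natural formal evaluation of $V(c_j\valpha)$ on $\M(q,\va)$ is $q^{-2}S(q, c_j\va)\,I(c_j\vbeta; X)$, independently of whether $(q, c_j\va) = 1$. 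Consequently the integrand factors as $q^{-2s}\prod_j S(q, c_j\va)\cdot \prod_j I(c_j\vbeta; X)$, and a rescaling $\eta_i = X^{k_i}\beta_i$ contributes a Jacobian factor $X^{-K}$ while rewriting each $I(c_j\vbeta;X)$ as $X^2\int_{[0,1]^2} e(c_j\veta\cdot\vF(\vgamma))\intd\vgamma$. The truncated $\vbeta$-integral thus becomes $X^{2s-K}$ times the integral over the cube $[-q^{-1}X^{1/4}, q^{-1}X^{1/4}]^N$ of the integrand defining $\mathfrak{J}$.

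The remaining task is to extend the $\veta$-integral to $\R^N$ and the $q$-sum to all positive integers. For the integral tail, the rescaled form of Lemma \ref{integral bound} yields $\int_{[0,1]^2}e(\veta\cdot \vF(\vgamma))\intd\vgamma \ll (1 + \max_i|\eta_i|)^{-1/k+\eps}$, so the $s$-fold product is absolutely integrable at infinity once $s(1/k-\eps) > N$, with quantitative saving $X^{-\delta_1}$ on truncation. For the singular series tail, Lemma \ref{complete sum bound} gives $|S(q, c_j\va)| \ll (q, c_j\va)^2 q^{2-1/k+\eps}$, and the condition $(q, \va) = 1$ forces $(q, c_j\va) = (q, c_j) = O(1)$. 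The $\va$-sum of $\prod_j|S(q, c_j\va)|$ is therefore $\ll q^{N+s(2-1/k+\eps)}$, so weighted by $q^{-2s}$ this becomes $\ll q^{N - s/k + s\eps}$, whose tail beyond $X^{1/4}$ is $O(X^{-\delta_2})$ precisely when $s > k(N+1)$. Multiplying the extended integral and series recovers $\mathfrak{J}\mathfrak{S}X^{2s-K}$ with a combined error $O(X^{2s - K - \delta})$ for an appropriate $\delta > 0$.

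The principal obstacle is making both tail estimates work with a single exponent $\delta > 0$ under the hypothesis $s \geq k(N+1) + 1$; this amounts to choosing $\eps > 0$ small enough that both $s/k - N - s\eps > 0$ and $s/k - (N+1) - s\eps > 0$, which is possible as soon as $s > k(N+1)$. Identifying the truncated-then-extended main term with $\mathfrak{J}\mathfrak{S}X^{2s-K}$ is then a matter of recognising the definitions, and the final $\delta$ emerges as the minimum of $\delta_1$ and $\delta_2$ (together with any further power loss incurred in reconciling the two truncation scales).
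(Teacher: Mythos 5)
Your proposal is correct and follows essentially the same route as the paper's proof: split over the disjoint major arcs, substitute $\valpha = \va/q + \vbeta$ and rescale by $X^{k_i}$ in each coordinate (producing the factor $X^{-K}$), then complete the singular integral to $\R^N$ via Lemma \ref{integral bound} and the singular series beyond $q > X^{1/4}$ via Lemma \ref{complete sum bound}, with the hypothesis $s \geq k(N+1)+1$ supplying the room for both tail estimates, exactly as in the paper's choices of $\delta_1$ and $\delta_2$. The only cosmetic differences are that the paper massages the bound of Lemma \ref{integral bound} into a coordinate-wise product via AM--GM before integrating (you integrate the sup-norm bound directly, giving the same condition $s > kN$), and that you make explicit the observations, used implicitly in the paper, that $S(dq,d\va)=d^2S(q,\va)$ and that $(q,c_j\va)=O(1)$ when $(q,\va)=1$.
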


\begin{proof}  
Define $A(q)$ to be the sum
\begin{equation}\label{A(q) defn}
A(q) = \twosum{\va \in [q]^N}{(q, \va) =1 } q^{-2s} S(q, c_1\va)\dotsm S(q, c_s\va),
\end{equation}
and let $\mathcal{I}(\vbeta; X)$ denote the product
$$
\mathcal{I}(\vbeta; X) = I(c_1\vbeta ; X) \dotsm I(c_s\vbeta; X).
$$
Then by disjointness of the major arcs, and a change of variables $\beta_i = (\alpha_i - a_i/q) X^{k_i}$, we have
\begin{equation}\label{original variable change}
\int_\M V(c_1\valpha) \dotsm V(c_s\valpha) \intd\valpha  = X^{-K}\sum_{1 \leq q \leq X^{1/4}} A(q) \int_{\mathcal{B}_q} \mathcal{I}( \beta_1X^{-k_1}, \dots, \beta_NX^{-k_N}; X) \intd\vbeta,
\end{equation}
where $\mathcal{B}_q = \prod_{1 \leq i \leq N} [-q^{-1}X^{1/4}, q^{-1}X^{1/4}]$.  Let $\delta_1 = \trecip{5}(\tfrac{s}{kN} - 1) > 0$.  By Lemma \ref{integral bound} and the AM--GM inequality we have
\begin{equation}\label{product integral bound}
\mathcal{I}( \beta_1X^{-k_1}, \dots, \beta_NX^{-k_N}; X) \ll X^{2s}\prod_{i=1}^N \bigbrac{1 + |\beta_i|}^{-\frac{s}{kN} + \delta_1}.
\end{equation}
It follows that 
\begin{align*}
\int_{\R^N \setminus \mathcal{B}_q} \mathcal{I}( \beta_1X^{-k_1},& \dots, \beta_NX^{-k_N}; X) \intd\vbeta \\ & \ll X^{2s} \int_{X^{1/4}}^\infty x^{-(1+4\delta_1)}\intd x \brac{\int_\R \recip{(1 + |x|)^{1 + 4\delta_1}}\intd x}^{s-1}\\
			& \ll_{s, \delta_1} X^{2s - \delta_1}.
\end{align*}
Combining this with another change of variables, we have  
\begin{equation}\label{singular integral error}
 \int_{\mathcal{B}_q} \mathcal{I}( \beta_1X^{-k_1}, \dots, \beta_NX^{-k_N}; X) \intd\vbeta = \mathfrak{J} X^{2s } + O(X^{2s - \delta_1}).
\end{equation}
Set $\delta_2 = \recip{5}(\frac{s}{k} - N - 1) > 0$.  By Lemma \ref{complete sum bound}
$$
A(q) \ll q^{N - \frac{s}{k} + \delta_2} =  q^{-1 - 4\delta_2}.
$$
Thus
\begin{align*}
\sum_{q > X^{1/4}} |A(q)|  \ll \sum_{q > X^{1/4}} q^{-1-4\delta_2}  \ll_{\delta_2} X^{- \delta_2}.
\end{align*}
It follows that 
\begin{equation}\label{singular series error}
\sum_{1 \leq q \leq X^{1/4}} A(q) = \mathfrak{S} + O(X^{-\delta_2}).
\end{equation}
Combining \eqref{original variable change}, \eqref{singular integral error} and \eqref{singular series error}, we obtain the result.
\end{proof}

\begin{lemma}\label{minor arc integral lemma}
There exist positive integers $r$ and $t$ such that for any $s \geq r + 2Mt$ there exists $\tau> 0$ such that
\begin{equation}\label{minor arc estimate equation}
\int_\m |f(\valpha)|^{s} \intd\valpha \ll X^{2s - K - \tau}.
\end{equation}
Moreover, one can ensure that 
$$
r + 2tM \leq kN(\log K + \log\log K + 26).
$$
\end{lemma}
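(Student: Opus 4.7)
The strategy is a standard Weyl-plus-mean-value split. Fix positive integers $r, t$ and assume $s \geq r + 2Mt$. The trivial bound $|f| \leq X^2$ together with a supremum extraction on the minor arcs gives
$$
\int_\m |f(\valpha)|^{s}\intd\valpha \leq X^{2(s - r - 2Mt)}\Bigbrac{\sup_{\valpha\in\m}|f(\valpha)|}^{r}\int_{\T^N}|f(\valpha)|^{2Mt}\intd\valpha.
$$
For the mean-value factor, Theorem \ref{intro VMVT} yields
$$
\int_{\T^N}|f|^{2Mt}\intd\valpha = J_{Mt,\Phi}(X) \ll X^{4Mt - K + K(1 - 1/k)^t}.
$$

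The supremum on $\m$ is controlled by Theorem \ref{minor arc estimate theorem}. Pick an admissible-exponent parameter $s_0 = Mu_0$ with $u_0$ the smallest integer for which $K(1 - 1/k)^{u_0} < 1/3$; explicitly $u_0 \asymp k\log K$ and so $s_0 \asymp Mk\log K$. The theorem then supplies $\sigma \asymp 1/s_0$ and $\eps > 0$ with $k\sigma + \eps < 1/4$, having the property that $|f(\valpha)|\geq X^{2-\sigma}$ forces $\valpha$ to admit a simultaneous Diophantine approximation of denominator at most $X^{k\sigma + \eps}< X^{1/4}$. Such an $\valpha$ lies in $\M$ by definition, so by contraposition $|f(\valpha)| < X^{2-\sigma}$ for $\valpha\in\m$.

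Combining these bounds in the case $s = r + 2Mt$ yields
$$
\int_\m|f|^s\intd\valpha \ll X^{r(2-\sigma)+4Mt - K + K(1-1/k)^t} = X^{2s - K - (r\sigma - K(1-1/k)^t)},
$$
and the same holds for any $s \geq r + 2Mt$ by absorbing the excess factors into trivial $X^2$ bounds. The required $\tau > 0$ exists whenever $r\sigma > K(1-1/k)^t$. Subject to this constraint, one minimizes $r + 2Mt$: treating $r$ as essentially the threshold value $K(1-1/k)^t/\sigma$, differentiating in $t$ gives an optimal choice $t \approx k\log(3K\log K)$, at which $r \approx 2Mk$, so that $r + 2Mt \approx 2Mk(\log K + \log\log K + O(1))$.

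The real work lies in polishing these order-of-magnitude estimates into the precise inequality $r + 2Mt \leq kN(\log K + \log\log K + 26)$. Since $2M$ can equal $N+1$ when $N$ is odd, a naive bound gives $(N+1)k$ rather than $Nk$ in front; the extra saving is obtained either by using the sharper admissible exponent alluded to in the remark after Lemma \ref{Vinogradov method lemma} (where $M$ is effectively replaced by $\ceil{(N-1)/2}$), or by a careful balancing of how much slack is built into the inequality $r\sigma > K(1-1/k)^t$, together with the choice of $s_0$. The conceptual part of the argument is routine; the principal obstacle is purely numerical bookkeeping to absorb all the lower-order terms into the constant $26$.
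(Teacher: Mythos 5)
Your argument is essentially the paper's own proof: the same split of $\int_\m |f|^s$ into $r$ factors bounded by the minor-arc supremum and $2Mt$ factors handled by the mean value estimate of Theorem \ref{intro VMVT}, the same pointwise bound $|f(\valpha)| < X^{2-\sigma}$ on $\m$ obtained by contraposing Theorem \ref{minor arc estimate theorem} (with an admissible exponent taken at an auxiliary $s_0 \asymp Mk\log K$ so that $\sigma \asymp 1/(Mk\log K)$ and $k\sigma+\eps < 1/4$ forces major-arc membership), and essentially the same choices $t \approx k\log(K\log K)$, $r$ just above the threshold $\sigma^{-1}K(1-\trecip{k})^{t}$, subject to $r\sigma > K(1-\trecip{k})^{t}$. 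The one point where you overcomplicate matters is the final bookkeeping: no sharper mean value theorem (the remark after Lemma \ref{Vinogradov method lemma} is explicitly not used) and no delicate balancing is needed; the paper simply takes $t = \ceil{k\log(K\log K)}$ and $r = \ceil{\sigma^{-1}Ke^{-t/k}}$, bounds $2tM \leq t(N+1)$ and $r \leq 22kN$, and absorbs both the excess coming from $2M \leq N+1$ and the $O(kN)$-sized $r$ into the additive constant, using only $k\geq 2$, $N \geq 3$, $K \geq 4$ and $K \leq \min\set{N^2, k^3}$.
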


\begin{proof}

Let us first find a large value for the expression $\tfrac{1-3\Delta_s}{6s + 3}$ occurring in Theorem \ref{minor arc estimate theorem}.  Setting $s_0 = M\ceil{k \log(21K)}$, by Theorem \ref{intro VMVT} we have
$$
\Delta_{s_0} < Ke^{- \ceil{k \log (21 K)}/k} \leq \trecip{21}.
$$ 
Therefore 
$$
\tfrac{1-3\Delta_{s_0}}{6s_0 + 3} > \trecip{7 s_0 + (7/2)} : = \sigma, \ \text{say}.
$$
Since $\Phi$ is non-degenerate of degree at least two, Lemma \ref{no one dim space} guarantees that $\Phi$ has two linearly independent derivatives of degree one.  This implies  that $N \geq 3$ and $K \geq 4$.  Hence
\begin{equation}\label{k sigma bound}
k \sigma = \tfrac{k}{7s_0 + (7/2)} \leq \trecip{M \log(21 K)} \leq \trecip{2.\log(21.4)} < \trecip{8}.
\end{equation}
Let $\valpha \in \m$ and suppose that 
\begin{equation}
|f(c_j\valpha) | \geq X^{2 - \sigma}.
\end{equation}
Provided $X$ is sufficiently large, it follows from Theorem \ref{minor arc estimate theorem} and \eqref{k sigma bound} that there exists $q \in \N$ and integers $a_1, \dots, a_N$, with $|q (c_j\alpha_i )- a_i| \leq X^{1/8}$ and $q \leq X^{1/8}$.  For $X \gg_{\vc} 1$ sufficiently large, we have $|c_j|q \leq |c_j|X^{1/8} \leq X^{1/4}$, so $\valpha \in \M(c_jq, \vb) \subset \M$, a contradiction.  Hence we must in fact have
\begin{equation}\label{sigma saving}
|f(c_j\valpha) | \leq X^{2 - \sigma}.
\end{equation}

Set
\begin{equation}\label{t and r def}
t = \ceil{k \log(K\log K)} \quad \text{and} \quad r = \ceil{\sigma^{-1}Ke^{-t/k}},
\end{equation}
and let $\Delta_{tM}$ be an admissible exponent for $(tM, \Phi)$.  It suffices to prove \eqref{minor arc estimate equation} for $s = r + 2t M$.  By \eqref{sigma saving}, H\"{o}lder's inequality and Theorem \ref{intro VMVT}, we have
\begin{align*}
\int_{\m} f(c_1\valpha) \dotsm f(c_s\valpha)\intd\valpha & \leq X^{2r - r\sigma } \oint |f(\valpha)|^{2tM} \intd\valpha\\
										& \ll X^{2s - K -(r\sigma - \Delta_{tM})}.
\end{align*}
Since $r \sigma > \Delta_{tM}$, we obtain \eqref{minor arc estimate equation}.  

It remains to show that $r + 2tM \leq k N(\log K + \log\log K + 21)$.  Using the fact that  $k \geq 2$, $N \geq 3$, $K \geq 4$ and $K \leq \min\set{ N^2, k^3}$, we have 
\begin{align*}
2tM & \leq t(N + 1) \\ & \leq k(N+1) \log(K\log K) + N +1\\
				& \leq kN(\log K + \log\log K +4)
\end{align*}
and
\begin{align*}
 r  & \leq Ke^{-t/k} \sigma^{-1} + 1 \\
 	& \leq \frac{7kM\log(21K) + 7 M + (7/2)}{\log K} +1\\
	& \leq 22kN.
\end{align*}\end{proof}

\begin{proof}[Proof of Theorem \ref{asymptotic formula}]
Let $r$ and $t$ be defined as in the proof of Lemma \ref{minor arc integral lemma}.  We deduce the theorem under the weaker assumption that $s \geq r + 2tM$.  From this assumption, it follows that $s = u + 2v M$, where $u \geq 1$ and  $v \geq kM(K\log(K\log K) + 6)$.  Therefore $\Delta_{vM} \leq e^{-6} < 3/4$.  Combining this with Theorem \ref{intro VMVT}, Lemma \ref{generating asymptotic} and H\"{o}lder's inequality, we see that there exists $j \in [s]$ such that
\begin{align*}
\int_{\M}  ( f(c_1\valpha) \dotsm f(c_s\valpha) &- V(c_1\valpha) \dotsm V(c_s\valpha)) \intd\valpha\\ & \ll X^{2u - \frac{3}{4}}\Bigbrac{ \oint |f(\valpha)|^{2vM}\intd\valpha + \int_{\M} |V(c_j\valpha)|^{2vM} \intd\valpha}\\
& \ll X^{2s - K + \Delta_{vM} - \frac{3}{4}} + X^{2s - K - \frac{3}{4}}\\
& \ll X^{2s - K - \tau_1}, \quad \text{say}.
\end{align*}
Using this, together with Lemma \ref{V major arc asymptotic} and Lemma \ref{minor arc integral lemma}, we see there exists $\tau_2 > 0$ such that
\begin{equation}
\oint f(c_1\valpha) \dotsm f(c_s\valpha) \intd\valpha = \mathfrak{S}\mathfrak{J} X^{2s - K} + O(X^{2s - K - \tau_2})
\end{equation}

It remains to show that $\mathfrak{J} = \sigma_{\infty}$, that $\mathfrak{S} = \prod_p \sigma_p$ and that these quantities are positive under the appropriate non-singularity hypotheses.  To prove $\mathfrak{J} = \sigma_\infty$ we use a method of Schmidt \cite{schmidt82}, as described by Parsell \cite{parsell02}.  For a positive real $T$, define
$$
K_T(\beta) = \brac{ \frac{\sin(\pi \beta T^{-1})}{\pi \beta T^{-1}}}^2, \qquad \mathcal{K}_T(\vbeta) = K_T(\beta_1) \dotsm K_T(\beta_n).
$$
It follows from Baker \cite[Lemma 14.1]{baker86} that
\begin{equation}\label{transform calculation}
\begin{split}
\hat{K}_T(y) & = \int_\R K_T(\beta) e( - \beta y ) d\beta\\
			& = T \max\set{0, 1- T|y|}.
\end{split}
\end{equation}
In particular, this Fourier transform is always non-negative.  Write
$$
I(\vbeta) = I(\vbeta; 1) = \int_{[0,1]^2} e\brac{\vbeta \cdot \vF(\gamma)}d\vgamma \qquad \text{and} \qquad \mathcal{I}(\vbeta) = I(c_1\vbeta)\dotsm I(c_s\vbeta).$$
By Fubini's theorem, we have that 
\begin{align*}
\mu_T & = \int_{[0,1]^{2s}} \hat{K}_T\Bigbrac{\sum_{i=1}^sc_iF_1(\vgamma_i)} \dotsm  \hat{K}_T\Bigbrac{\sum_{i=1}^sc_iF_N(\vgamma_i)} \intd\vgamma\\
		& =  \int_{\R^N} \mathcal{K}_T(\vbeta) \mathcal{I}(\vbeta) \intd\vbeta
\end{align*}
Lemma \ref{integral bound} and the AM--GM inequality ensure that, for any $\eps > 0$, we have the bound
$$
\mathcal{I}(\vbeta) \ll_{\eps} \prod_{1\leq i \leq N}(1+ |\beta_i|)^{-\frac{s}{kN} + \eps},
$$
and a simple estimate reveals that
\begin{equation}\label{K bound}
1 - \mathcal{K}_T(\vbeta) \ll \min\set{1, |\vbeta|^2 T^{-2}}.
\end{equation}
Therefore
\begin{align*}
\mathfrak{J} - \mu_T  & \ll \int_{\R^N}\min\set{1, |\vbeta|^2 T^{-2}} \prod_{1\leq i \leq N}(1+ |\beta_i|)^{-\frac{s}{kN} + \eps} \intd\vbeta\\
				& \ll \int_{|\vbeta| > T^{\recip{3N}} }\prod_{1\leq i \leq N}(1+ |\beta_i|)^{-\frac{s}{kN} + \eps} \intd\vbeta + 
				\int_{|\vbeta| \leq T^{\recip{3N}}}|\vbeta|^2 T^{-2}\intd\vbeta.
\end{align*}
Using $s > kN$, we see that
$$
\mathfrak{J} = \lim_{T \to \infty} \mu_T = \sigma_\infty.
$$

Next, let us suppose that there exists a non-singular real solution to \eqref{non-singular solution equation 2}.  Writing $\vP(\vx)$ for $\sum_{i=1}^s c_i \vF(x_{2i-1}, x_{2i})$, it follows that there is some $\vxi \in \R^{2s}$ for which $ \vP(\vxi) = 0$, along with $S \subset [2s]$ such that $|S| = N$ and
$$
\det\brac{ \frac{ \partial P_i}{\partial \xi_j}(\vxi)}_{\substack{1 \leq i \leq N\\ j \in S}} \neq 0.
$$
The translation-dilation invariance of \eqref{non-singular solution equation 2} ensures that we may assume that $ \vxi \in (0, 1)^{2s}$.    Let $[2s]\setminus S = \set{l(N+1), \dots, l(2s)}$.  Define the function $\rho : \R^{2s} \to \R^{2s}$ by 
$$
\rho_i(\vgamma) = \begin{cases} P_{i}(\vgamma) & \text{if } 1 \leq i \leq N,\\
						\gamma_{l(i)} & \text{if } N < i \leq 2s. \end{cases}
$$
Let $\veta = \rho(\vxi)$, so that $\eta_i = 0$ for $1 \leq i \leq N$.  Notice that 
$$
|\det \rho'(\vgamma)| = \abs{\det\brac{ \frac{ \partial P_i}{\partial \xi_j}(\vgamma)}_{1 \leq i \leq N,\, j \in S} }.
$$
By the Inverse Function Theorem, there exists an open set $U\subset [0,1]^{2s}$ which contains $\vxi$ and an open set $V$ containing $\veta$ such that $\rho$ is a homeomorphism from $U$ to $V$.  
Define the constant $C_1 = C_1(\Phi, s)$ by
$$
C_1 = \max_{\vgamma \in [0,1]^{2s}} \abs{\det\brac{ \frac{ \partial P_i}{\partial \xi_j}(\vgamma)}_{1 \leq i \leq N,\, j \in S} }.
$$
Using positivity of the Fourier transform $\hat{K}_T$ and the fact that $U \subset [0,1]^{2s}$, we have 
$$
\int_{[0,1]^{2s}} \hat{K}_T(P_1(\vgamma)) \dotsm  \hat{K}_T(P_N(\vgamma)) \intd\vgamma \geq \int_{U} \hat{K}_T(P_1(\vgamma)) \dotsm  \hat{K}_T(P_N(\vgamma)) \intd\vgamma.
$$  
This latter integral is in turn bounded below by 
$$
 \recip{C_1} \int_{U} \hat{K}_T(\rho_{1}(\vgamma)) \dotsm  \hat{K}_T(\rho_{N}(\vgamma)) |\det \rho'(\vgamma)| \intd\vgamma.
$$
By a change of variables this equals 
$$
\recip{C_1} \int_{V} \hat{K}_T(\zeta_{1}) \dotsm  \hat{K}_T(\zeta_{N}) \intd\vzeta.
$$

Since $V$ is defined independently of $T$, there exists $\eps = \eps(\Phi, s) > 0$ such that if $|\zeta_i - \eta_i| \leq \eps$ $(1 \leq i \leq 2s)$, then $\vzeta \in V$.  Let $W_T$ denote the set of $\vzeta$ for which $|\zeta_{i} | \leq (2T)^{-1}$ $(1\leq i \leq N)$ and $|\zeta_i - \eta_i| \leq \eps$ $(i > N)$.  Then for $T \geq (2\eps)^{-1}$, the set $W_T$ is contained in $V$.  Moreover, for $\vzeta \in W_T$ and $1\leq i \leq N$ we have $\hat{K}_T(\zeta_{i}) \geq T/2$.  Therefore
\begin{align*}
 \int_{V} \hat{K}_T(\zeta_{1}) \dotsm  \hat{K}_T(\zeta_{N}) \intd\vzeta &\geq \int_{W_T} \hat{K}_T(\zeta_{1}) \dotsm  \hat{K}_T(\zeta_{N}) \intd\vzeta\\
 & \geq \meas (W_T) \frac{T^N}{2^N}\\
 & \geq T^{-N} (2\eps)^{2s-N}\frac{T^N}{2^N}\\
 & \gg_{\Phi, s} 1.
 \end{align*}
 Hence $\mu_T \gg_{\Phi, s} 1$ for all sufficiently large $T$.

Let us now turn to the singular series $\mathfrak{S}$.  Recalling \eqref{A(q) defn}, for each prime $p$ define 
$$
T(p) = \sum_{h=0}^\infty A(p^h),
$$
By Lemma \ref{complete sum bound} this series is absolutely convergent for $s > k(N+1)$.  

Let $q$ and $r$ be coprime positive integers.  By Euclid's algorithm, any pair $\vx$ of residues modulo $qr$ can be represented uniquely in the form $r\vy + q \vz$ with $\vy \in [q]^2$ and $\vz \in [r]^2$.  It follows that for $\va, \vb \in \Z^N$ we have $S(qr, r\va + q \vb) = S(q, \va)S(r, \vb)$.  Again, by Euclid's algorithm, each $N$-tuple $\va'$ of residues modulo $qr$ with $(\va', qr) = 1$ can be represented uniquely in the form $r\va + q \vb$ with $\va \in [q]^N,\ (\va, q) =1$ and $\vb \in [r]^N,\ (\vb, r) = 1$.  A similar argument therefore gives $A(qr) = A(q)A(r)$.

Let $p_1, \dots, p_m$ denote the primes bounded above by $X$.  Using multiplicativity of $A(q)$, together with Lemma \ref{complete sum bound} and the fact that $\frac{s}{k} -N = 1 +2\eps$ for some $\eps >0$, we have
\begin{align*}
\prod_{p \leq X} T(p) - \mathfrak{S} & = \sum_{h_1=0}^\infty \dots \sum_{h_m = 0}^\infty A(p_1^{h_1}\dotsm p_m^{h_m}) - \sum_{q =1}^\infty A(q)\\
& \ll \sum_{q > X}\abs{A(q)}\\
& \ll \sum_{q> X} q^{N + \eps - s/k} \to 0 \qquad \text{as $X\to \infty$}.
\end{align*}

By orthogonality
\begin{align*}
M(p^H) & = \sum_{\vx \in [p^H]^{2s}} p^{-NH} \sum_{\va \in [p^H]^N} e\brac{ \va \cdot \vP(\vx)/p^H}\\
		& = p^{-HN} \sum_{\va \in [p^H]^N} \prod_{j=1}^s S(p^H, c_j \va).
\end{align*}
Partitioning the sum over $\va$ according to the value of $(p^{H}, \va)$, we see that $M(p^H) $ is equal to
\begin{align*}
 p^{H(2s - N)}\sum_{h = 0}^H A(p^h).
\end{align*}
It follows that $\mathfrak{S} = \prod_p \sigma_p$.

To show positivity of $\mathfrak{S}$, we begin with the following result from elementary linear algebra.
\begin{lemma}\label{ph lemma}
Let $h, H$ be non-negative integers with $H \geq h+1$.  Suppose that $A$ is an $n \times n$ integer matrix with $p^h || \det A$.  Then the image $\set{ A \cdot \vx: \vx \in( \Z/ p^H \Z)^n}$ contains the subgroup $\set{p^h \vy : \vy \in( \Z/ p^H \Z)^n}.$
\end{lemma}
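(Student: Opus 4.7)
My plan is to construct the preimage explicitly by means of the classical adjugate matrix $\adj(A)$, exploiting the identity $A \cdot \adj(A) = (\det A) \cdot I$ valid over $\Z$.

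First I would factor $\det A = p^h u$ with $(u,p) = 1$, which is legitimate by the hypothesis $p^h \| \det A$. Since $(u, p^H) = 1$, there exists an integer $v$ satisfying $uv \equiv 1 \pmod{p^H}$. Given an arbitrary target $\vy \in (\Z/p^H\Z)^n$, I lift $\vy$ to an integer vector and set $\vx = v \cdot \adj(A) \cdot \vy$, which lies in $\Z^n$ because $\adj(A)$ has integer entries. A single line of manipulation then gives
\[
A \vx = v \cdot A \cdot \adj(A) \cdot \vy = v (\det A) \vy = (uv) p^h \vy \equiv p^h \vy \pmod{p^H},
\]
which places $p^h \vy$ in the image of $A$ modulo $p^H$, as required.

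There is really no obstacle here: the only subtle point, and the reason the lemma is recorded at all, is that $\det A$ itself is not invertible modulo $p^H$ (it vanishes to order $h$). The adjugate identity side-steps this by producing the factor $p^h$ on the right-hand side rather than forcing us to divide by it, leaving only the coprime-to-$p$ part $u$ to invert. The hypothesis $H \geq h+1$ plays no role beyond ensuring the statement is non-trivial: for $H \leq h$ the subgroup $\{p^h \vy : \vy \in (\Z/p^H\Z)^n\}$ reduces to $\{0\}$, which is automatically in the image.
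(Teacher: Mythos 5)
Your proof is correct and is essentially identical to the paper's own argument: both rest on the adjugate identity $A\cdot\adj(A) = (\det A)I_n$, the factorisation $\det A = up^h$ with $u$ a unit modulo $p^H$, and the explicit preimage $u^{-1}\adj(A)\vy$. Your closing observation that the hypothesis $H \geq h+1$ is not actually used in the argument is also accurate.
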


For a proof of this lemma, let $A_{ij}$ denote the $ij$-minor of $A$, obtained from $A$ by deleting the $i$th row and $j$th column.  We define the adjunct matrix of $A$ by
$$
\adj(A) = \brac{ (-1)^{i+j} A_{ji}}_{1\leq i, j\leq n}.
$$
Then we have the identity
\begin{equation}\label{adjunct identity}
A\cdot \adj(A)= \det(A) I_n.
\end{equation}
Since $p^{h} || \det(A)$, we have $\det(A) = u p^h$ where $u$ is a unit in $\Z/ p^{H} \Z$.  Let $\vy \in (\Z/ p^{H} \Z)^n$.  Then
\begin{align*}
p^h \vy & = \det(A) (u^{-1} \vy)\\
		& = A\cdot ( u^{-1} \adj(A)\cdot\vy),
\end{align*} as required.  This completes the proof of Lemma \ref{ph lemma}.

Given a subset $S \subset [2s]$, define the Jacobian matrix 
$$
J_{\vP}(\vx; S) = \brac{ \frac{\partial P_i}{\partial x_j} (\vx)}_{1 \leq i \leq N,\, j \in S}.
$$
When $|S| = N$ we define $\Delta_{\vP}(\vx; S)$ to be the determinant of $J_{\vP}(\vx; S)$.  Given a positive integer $h$, let $\mathcal{B}_{h}(p^H)$ denote the set of $\vx \in (\Z/ p^H \Z)^{2s}$ with $\vP(\vx) \equiv 0 \bmod p^H$ and for which there exists $S \subset [2s]$ with $|S| = N$ and $p^h ||  \Delta_{\vP}(\vx; S)$.  The following claim is a version of Hensel's lemma.
\begin{claim}
For $H \geq 2h+1$ we have the bound
\begin{equation}\label{nonsing gamma count}
\abs{\mathcal{B}_{h}(p^{H+1})} \geq p^{(2s-N)}\abs{\mathcal{B}_{h}(p^{H})}.
\end{equation}
\end{claim}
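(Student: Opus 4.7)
The plan is a Hensel-type lifting argument. The hypothesis $H \geq 2h+1$ is the exact condition under which the quadratic remainder in a Taylor expansion of $\vP$ around an integer point becomes negligible modulo $p^{H+1}$, which is what makes the lift feasible even though the Jacobian is singular when $h \geq 1$.

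For each $\vx \in \mathcal{B}_h(p^H)$, I would choose a subset $S \subset [2s]$ of size $N$ with $p^h || \Delta_{\vP}(\vx;S)$, lift to some $\tilde{\vx} \in \Z^{2s}$ with $\vP(\tilde{\vx}) \equiv p^H \vu \pmod{p^{H+1}}$, and set $A = J_{\vP}(\tilde{\vx};S)$, so that $p^h || \det A$. By Taylor expansion one has
\[\vP(\tilde{\vx} + p^{H-h}(\vz_S, \mathbf{0})) \equiv p^H \vu + p^{H-h} A \vz_S \pmod{p^{H+1}},\]
since the higher-order terms are $O(p^{2(H-h)})$ and $2(H-h) \geq H+1$. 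Thus producing a lift reduces to solving the congruence $A \vz_S \equiv -p^h \vu \pmod{p^{h+1}}$, which is exactly the situation handled by Lemma~\ref{ph lemma}. The resulting element $\vy^{(0)} \in (\Z/p^{H+1}\Z)^{2s}$ lies in $\mathcal{B}_h(p^{H+1})$, with the Jacobian condition $p^h || \Delta_{\vP}(\vy^{(0)};S)$ persisting because $\vy^{(0)} \equiv \tilde{\vx} \pmod{p^{H-h}}$ and $H - h \geq h+1$.

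Next I would exploit the translation freedom around $\vy^{(0)}$: the set of $\vw \in (\Z/p\Z)^{2s}$ for which $\vy^{(0)} + p^H \vw$ still lies in $\mathcal{B}_h(p^{H+1})$ is exactly $\ker(J_{\vP}(\vy^{(0)}) \bmod p)$, and since $J_{\vP}$ is an $N \times 2s$ matrix this kernel has cardinality at least $p^{2s-N}$. So each $\vx \in \mathcal{B}_h(p^H)$ gives rise to at least $p^{2s-N}$ elements of $\mathcal{B}_h(p^{H+1})$.

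The main obstacle is the global bookkeeping: these collections of lifts need not be disjoint across different choices of $\vx$ (and, closely related, the reduction map $\mathcal{B}_h(p^{H+1}) \to \mathcal{B}_h(p^H)$ can fail to be surjective when $h \geq 1$, since our constructed $\vy^{(0)}$ only satisfies $\vy^{(0)} \equiv \tilde{\vx} \pmod{p^{H-h}}$ rather than $\pmod{p^H}$). To handle this, I would recast the bound as a double count on the set of pairs $(\vx, \vy) \in \mathcal{B}_h(p^H) \times \mathcal{B}_h(p^{H+1})$ with $\vy \equiv \vx \pmod{p^{H-h}}$, apply Smith normal form to $A$ to conclude that $\mathrm{rank}(J_{\vP} \bmod p) \geq N - h$ at each lifted point, and thereby show that the enlarged non-empty fibres of the reduction (of size up to $p^{2s-N+h}$) precisely offset the fraction of $\mathcal{B}_h(p^H)$ which does not lift to a fibre above itself. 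This balancing of fibre size against loss of surjectivity is the technical heart of the argument.
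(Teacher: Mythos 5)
Your opening step is exactly the paper's: expand $\vP$ around a lift of $\vx$, kill the quadratic error using $2(H-h)\geq H+1$, solve the resulting linear congruence modulo $p^{h+1}$ via Lemma \ref{ph lemma}, and note that the exact divisibility $p^h\,\|\,\Delta_{\vP}(\,\cdot\,;S)$ persists because $H-h\geq h+1$. The divergence, and the gap, lies in how you extract the factor $p^{2s-N}$. You create multiplicity \emph{after} lifting, by perturbing the single point $\vy^{(0)}$ by $p^H\vw$ with $\vw\in\ker(J_{\vP}(\vy^{(0)})\bmod p)$, and you correctly observe that these clusters need not be disjoint as $\vx$ varies, since $\vy^{(0)}$ only remembers $\vx$ modulo $p^{H-h}$. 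But your proposed repair is not a proof: the assertion that the enlarged fibres ``precisely offset'' the failure of surjectivity is precisely what needs to be established, and the double count you sketch does not give it. If you count pairs $(\vx,\vy)$ with $\vy\equiv\vx\pmod{p^{H-h}}$, then the fibre over a fixed $\vy\in\mathcal{B}_h(p^{H+1})$ consists of all $\vx=\vy+p^{H-h}\vu$ with $\vu\in(\Z/p^h\Z)^{2s}$ and $J_{\vP}(\vy)\vu\equiv 0\pmod{p^h}$, which has size of order $p^{(2s-N)h+h}$; your bound $p^{2s-N+h}$ is the size of $\ker(J_{\vP}\bmod p)$, i.e.\ the fibre of a one-step reduction, not of the $h$-step reduction you are using. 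The resulting inequality is then far weaker than \eqref{nonsing gamma count} as soon as $h\geq 1$; and if you instead insist on pairs with $\vy\equiv\vx\pmod{p^{H}}$, the lower bound on the number of pairs fails, because your lifts agree with $\vx$ only modulo $p^{H-h}$. So the step you yourself call the technical heart is left unresolved.

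The paper sidesteps this bookkeeping by building the multiplicity into the lift rather than adding it afterwards: for each of the $p^{2s-N}$ choices of $y_j\in[p]$ it first perturbs the coordinates \emph{outside} $S$ by $p^Hy_j$, forming $\vz$, and only then corrects the coordinates \emph{inside} $S$ by $p^{H-h}\vw$ with $\vw$ supported on $S$, applying Lemma \ref{ph lemma} at $\vz$. For a fixed $\vx$ the resulting $p^{2s-N}$ points are automatically distinct, being distinguished by their off-$S$ coordinates modulo $p^{H+1}$, and these coordinates also retain the off-$S$ coordinates of $\vx$ modulo $p^H$, so the count $|\mathcal{B}_h(p^{H+1})|\geq p^{2s-N}|\mathcal{B}_h(p^H)|$ follows at once, with no Smith normal form, no rank estimate, and no fibre balancing. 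If you wish to keep your formulation, the cleanest fix is to adopt this parametrisation (perturb off $S$ first, then correct on $S$); as written, your argument does not yet prove the claim.
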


Fix $\vx \in (\Z/ p^H \Z)^{2s}$ with $\vP(\vx) \equiv 0 \bmod p^H$ and $S \subset [2s]$ with $|S| = N$ and $p^h ||  \Delta_{\vP}(\vx; S)$.  For each $j\notin S$ choose $y_j \in [p]$ and define
$$
z_j = \begin{cases} x_j & (j \in S),\\
				x_j + p^H y_j & (j \notin S).\end{cases}
$$  Let $\vw \in \Z^{2s}$ be subject to the condition that $w_j = 0$ if $j \notin S$.
By the binomial theorem and the fact that $2(H - h) \geq H+1$, we have
\begin{equation}\label{congruence we want} 
\vP(\vz + p^{H-h}\vw)  \equiv \vP(\vz) + p^{H-h} J_{\vP}(\vz; S) \cdot (w_j)_{j \in S}\pmod{ p^{H+1}}.
\end{equation}
Since $\vP(\vz) \equiv \vP(\vx) \equiv 0 \mod p^{H}$, we see that $p^h$ divides every entry in the $N$-tuple of integers $\vP(\vz)/p^{H-h}$.  Hence
$$
-\vP(\vz)/p^{H-h} \in \set{ p^h \vy : \vy \in ( \Z/ p^{h+1} \Z)^N}.
$$
Notice that $\Delta_{\vP}(\vz; S) \equiv \Delta_{\vP}(\vx; S) \mod p^{h+1}$, and so $p^h || \Delta_{\vP}(\vz; S)$.  Therefore, by Lemma \ref{ph lemma}, for each $j \in S$ we can find $w_j \in \Z/ p^{h+1} \Z$ so that 
$$
J_{\vP} (\vz; S)\cdot (w_j)_{j \in S} \equiv -\vP(\vz)/p^{H-h} \pmod{p^{h+1}}.
$$
Moreover, since $H- h \geq h+1$, we have $\Delta_{\vP}(\vz + p^{H-h}\vw; S) \equiv \Delta_{\vP}(\vx; S) \mod p^{h+1}$.  Hence
$$
\vz + p^{H-h}\vw \in \mathcal{B}_{h}(p^{H+1}).
$$
Since $w_j = 0$ if $j \notin S$, we see that for each choice of $\vz$, the sum $\vz + p^{H-h}\vw$ gives a unique element of $
\mathcal{B}_{h}(p^{H+1})$.  As there are $p^{2s-N}$ choices for $\vz$ for each choice of $\vx \in \mathcal{B}_{h}(p^{H})$, the claim follows.

Suppose there exists $\vx \in \Q_p^{2s}$ such that $\vP(\vx) = 0$ and $S \subset [2s]$ such that the Jacobian matrix $J_\vP(\vx; S)$ is non-singular over $\Q_p$.  By homogeneity of the $P_i$ we may assume all the entries of $\vx$ are $p$-adic integers.  Hence there exists a non-negative integer $h$ such that $|\Delta_{\vP}(\vx; S)|_p = p^{-h}$.  Take any $\vy \in \Z^t$ such that $\vy \equiv \vx \mod p^{2h+1}$.  Then $p^h || \Delta_{\vP}(\vy; S)$ and $\vP(\vy) \equiv 0 \mod p^{2h+1}$, so $\vy \in \mathcal{B}_{h}(p^{2h+1})$.  In particular, $|\mathcal{B}_{h}(p^{2h+1})|\geq 1$.  Iterating the bound \eqref{nonsing gamma count} obtained in the previous lemma, we have established that there exists a non-negative integer $h = h(\Phi, p)$ such that for all $H \geq 2h+1$ we have the lower bound
\begin{equation}\label{MF lower bound}
|\mathcal{B}_{h}(p^H)| \geq p^{(2s-N)(H- 2h -1)}.
\end{equation}
Clearly $M(p^H) \geq \mathcal{B}_h(p^H)$, so inputting this into the relation \eqref{T(p) relation}, we obtain
\begin{align*}
T(p) & = \lim_{H \to \infty} p^{-H(2s-N)}M(p^H)\\
	& \geq p^{-(2s-N)(2h+1)}\\
	& \gg_{s, \Phi, p} 1.
\end{align*}

The absolute convergence of the product $\mathfrak{S} = \prod_p T(p)$, with all $T(p)$ positive, implies that
$$
\lim_{X\to \infty} \prod_{p > X} T(p) = 1.
$$
In particular, there exists $X_0$ such that for all $p > X_0$ we have
$$
\prod_{p > X_0} T(p) > 1/2.
$$
Since $T(p) > 0$ for all $p \leq X_0$, we also have $\prod_{p\leq X_0} T(p) > 0$.  Therefore
$$
\mathfrak{S} = \prod_{p\leq X_0} T(p) \prod_{p > X_0} T(p) > 0.
$$
\end{proof}

\section{Density bounds for solution-free sets}

This section is dedicated to the proof of our main theorem.

\begin{theorem}\label{full density theorem}
Let $\Phi \in \Z[x,y]$ be a binary form of degree $k\geq 2$, differential dimension $N$ and differential degree $K$, and let $\vc \in \Z^s$ be a non-singular choice of coefficients for $\Phi$ with $c_1 + \dots + c_s = 0$ .  Suppose that $s \geq kN(\log K + \log\log K + 27)$.  Then any set $A \subset [X]^2$ containing only diagonal solutions $(\vx_1, \dots, \vx_s) \in A^s$ to the system of equations
\begin{equation}\label{multi equation}
 c_1 \Phi^{u,v}(\vx_1) + \dots + c_s \Phi^{u,v}(\vx_s) = 0 \qquad (u+v \geq 0),
\end{equation}
satisfies the bound
\begin{equation}\label{density bound 2}
 |A| \ll X^2\brac{\log\log X}^{-1/(s-1)}.
\end{equation}
Here the implicit constant depends only on $\vc$ and $\Phi$.
\end{theorem}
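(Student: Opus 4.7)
I would use a density-increment argument in the style of Roth--Gowers, iterated via the translation-dilation invariance \eqref{equation translation invariance}. Set $\delta = |A|/X^2$ and assume (as we may) $\delta \geq X^{-\eta}$ for some small $\eta > 0$, else \eqref{density bound 2} is immediate. A diagonal $s$-tuple in $A^s$ is either constant ($\leq |A|$ of these) or has two distinct entries $\vx_i, \vx_j$ which determine the common affine line; since any line meets $[X]^2$ in $O(X)$ points, there are at most $O(|A|^2 X^{s-2}) = O(\delta^2 X^{s+2})$ diagonal tuples in $A^s$, and hence $R_{\vc,\Phi}(A) \ll \delta^2 X^{s+2}$. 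On the other hand, Theorem \ref{asymptotic formula} (whose hypotheses follow from $s \geq kN(\log K + \log\log K + 27)$) yields $R_{\vc,\Phi}(X) \sim \kappa X^{2s-K}$ with $\kappa = \sigma_\infty \prod_p \sigma_p > 0$, because $\vc$ is non-singular. For $s > K + 2$ and $\delta \geq X^{-\eta}$, the ``random'' prediction $\kappa \delta^s X^{2s-K}$ dwarfs the upper bound $\delta^2 X^{s+2}$, so $A$ is forced to be structured.

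Expanding $R_{\vc,\Phi}(A) = \oint \prod_{j=1}^s f_A(c_j\valpha) \intd\valpha$ after writing $\mathbf{1}_A = \delta \mathbf{1}_{[X]^2} + g$, the discrepancy between $R_{\vc,\Phi}(A)$ and the anticipated $\delta^s R_{\vc,\Phi}(X)$ is carried by the Fourier terms with at least one factor of $\hat g$. Estimating those with $\valpha \in \m$ by the Weyl-type bound of Theorem \ref{minor arc estimate theorem} and using Lemma \ref{V major arc asymptotic} for the remainder, the only way $R_{\vc,\Phi}(A)$ can be as small as $\delta^2 X^{s+2}$ is if there exists $\valpha \in \M(q, \va)$ with $|\hat g(\valpha)| \gg \delta^{s-1} X^2$ (up to logarithmic factors). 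Such a coefficient, near a rational $\va/q$ with $q$ bounded by a small power of $X$, converts by a standard partitioning argument into an affine sub-box $\vxi + \lambda \cdot [X']^2 \subset [X]^2$, with $X' \geq X^{\sigma'}$ for some fixed $\sigma' \in (0,1)$, on which $A$ has density at least $\delta(1 + c\delta^{s-1})$. By \eqref{equation translation invariance}, the pullback of $A$ to $[X']^2$ retains the property of containing only diagonal solutions to \eqref{multi equation}.

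Iterating produces $(\delta_t, X_t)$ with $\delta_{t+1} \geq \delta_t + c\delta_t^s$ and $X_{t+1} \geq X_t^{\sigma'}$. Setting $u_t = \delta_t^{-(s-1)}$ yields $u_{t+1} \leq u_t - c'$, so the iteration terminates ($\delta_t > \tfrac12$) after $T \ll \delta^{-(s-1)}$ steps; on the other hand $X_T = X^{(\sigma')^T}$ must remain above the threshold required for every preceding invocation of Theorems \ref{minor arc estimate theorem} and \ref{asymptotic formula}, which forces $T \ll \log\log X$. Comparing the two bounds on $T$ gives $\delta^{-(s-1)} \ll \log\log X$, which is \eqref{density bound 2}. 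The main obstacle will be the density-increment step itself: one must turn a major arc Fourier coefficient of $f_A$ into a bona fide density excess on an affine dilate $\vxi + \lambda[X']^2$ whose size $X'$ is a controlled polynomial power of $X$, verify via \eqref{equation translation invariance} that the diagonal-solutions hypothesis survives the pullback, and check that the quantitative strength of Theorem \ref{minor arc estimate theorem} is sharp enough to sustain the iteration. The extra unit (27 versus 26) in the hypothesis on $s$, compared with Theorem \ref{asymptotic formula}, is precisely the slack required to re-apply the asymptotic on each shrunken box encountered during the iteration.
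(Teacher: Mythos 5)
Your overall scheme is the paper's own: bound the number of diagonal solutions trivially (only $O(|A|^{2}X^{s-2})$ of them), compare with the asymptotic $\kappa X^{2s-K}$, $\kappa=\sigma_\infty\prod_p\sigma_p>0$, supplied by Theorem \ref{asymptotic formula}, extract a large ``Fourier coefficient'' of the balanced function $g_A=f_A-\delta f$, convert it into a density increment on an arithmetic copy $\vr+q\cdot[Q_1]$ of a smaller square, and iterate using translation--dilation invariance (here $c_1+\dots+c_s=0$ is what makes \eqref{multi equation} translation-invariant); the $(\log\log X)^{-1/(s-1)}$ bound then falls out of the doubly-exponential shrinking of the boxes exactly as you describe. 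The genuine gap is in the conversion step. You propose to dispose of the minor arcs with Theorem \ref{minor arc estimate theorem} and thereby place the offending frequency $\valpha$ in a major arc $\M(q,\va)$, after which a ``standard partitioning argument'' gives the increment. But Theorem \ref{minor arc estimate theorem} is a statement about the complete sum $f(\valpha;X)$ over the full box; it says nothing about $f_A$ or $g_A$, which are sums over an arbitrary set. In the telescoped discrepancy $\oint\bigl(\prod_j f_A(c_j\valpha)-\delta^s\prod_j f(c_j\valpha)\bigr)\intd\valpha$ there is always a term, e.g.\ $f_A(c_1\valpha)\dotsm f_A(c_{s-1}\valpha)\,g_A(c_s\valpha)$, containing no complete-sum factor, so its minor-arc contribution cannot be controlled by any Weyl-type estimate; and a dense set $A$ genuinely can have $|g_A(\valpha)|\gg\delta^{O(1)}X^2$ at a minor-arc frequency (e.g.\ $A$ cut out by a condition on $\norm{\valpha\cdot\vF(\vx)}$ for such an $\valpha$). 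So the localization to $\M$ is unjustified, and it is also unnecessary.

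What the paper does instead, and what your sketch is missing, is Lemma \ref{small diameter lemma}: for an \emph{arbitrary} real phase $P(\vx)=\valpha\cdot\vF(\vx)$ one partitions $[Q]$ into sets $\vr_i+q_i\cdot[Q_i]$ with $Q_i$ of side at least $2^{-k}X^{\tau_k}$ on which $\norm{P(\vx)-P(\vy)}\ll X^{-\tau_k}$; this rests on Baker's simultaneous Diophantine approximation theorem applied to the degree-$k$ coefficients together with an induction on the lower-degree part, and it is the main technical content of the increment step (Lemma \ref{increment lemma}). With it one only needs $\sup_{\valpha\in\T^N}|g_A(\valpha)|\gg\delta^sX^2$, which follows from Theorem \ref{asymptotic formula}, H\"older, and $\oint|f_A|^{2t}\intd\valpha\leq\oint|f|^{2t}\intd\valpha\ll X^{4t-K}$; pigeonholing over the cells then yields \eqref{square increment}. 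Three smaller corrections: the extra unit ($27$ versus $26$) is spent in this H\"older step, where $s=1+\epsilon+2t$ and the asymptotic is applied with $2t$ variables, not on ``re-applying the asymptotic on shrunken boxes'' (its hypothesis does not degrade with the scale); the lower bound one actually gets is $|g_A(\valpha)|\gg\delta^sX^2$, not $\delta^{s-1}X^2$ (consistent with the increment $\delta\mapsto\delta+c\delta^s$ you then use); and your closing comparison is stated backwards -- the iteration yields $\log\log X\ll\delta^{-(s-1)}$, i.e.\ $\delta\ll(\log\log X)^{-1/(s-1)}$, not $\delta^{-(s-1)}\ll\log\log X$.
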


\begin{remark} We will prove Theorem \ref{full density theorem} under the assumption that $\Phi$ is non-degenerate.  The degenerate case follows from the same argument, but the superior bounds available in the standard Vinogradov mean value theorem ensure that, in this case, the lower bound on the number of variables required can be decreased.
\end{remark}

In order to prove Theorems \ref{full density theorem} it is useful to work with translates of sets of the form $[X]^2$.  We define a \emph{half-open square} to be a subset of $\R^2$ of the form
$$
Q = \vx + (0, X]^2,
$$
and call $X$ the \emph{side-length} of $Q$. Let us write $[Q]$ to denote the set of integer points in $Q$, namely $[Q] = Q \cap \Z^2$.

We reduce the proof of Theorem \ref{full density theorem} to the following density increment result.

\begin{lemma}\label{increment lemma}
Given the assumptions in Theorem \ref{full density theorem}, there exist absolute constants $\tau = \tau(k)$, $C = C(\vc, \Phi)$ and $c = c(\vc, \Phi) > 0$ such that for any $\delta > 0$ and any real 
$X \geq \exp(C/\delta)$, if $Q \subset \R^2$ is a half-open square with side-length $X$ and $A \subset [Q]$ satisfies $|A| = \delta |[Q]|$ and
\begin{equation}\label{A conditions}
R_{\vc, \Phi}(A)\leq c\delta^sX^{2s - K},
\end{equation}
then there exists a half-open square $Q_1$ with side-length at least $2^{-k}X^{\tau}$, along with $q \in \N$ and $\vr \in \Z^2$, such that 
\begin{equation}\label{square increment}
|A\cap (\vr + q \cdot [Q_1])| \geq (\delta + c\delta^s) |[Q_1]|.
\end{equation}
\end{lemma}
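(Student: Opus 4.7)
The plan is to adapt the Fourier-analytic density-increment strategy of Roth and Gowers, exploiting the translation-dilation invariance \eqref{equation translation invariance} of the system \eqref{multi equation}. After translating so that $Q = (0,X]^2$ (allowable by \eqref{equation translation invariance}), orthogonality gives
\[
R_{\vc,\Phi}(A) = \oint \prod_{j=1}^s f_A(c_j\valpha)\intd\valpha,\qquad f_A(\valpha) = \sum_{\vx\in A}e(\valpha\cdot\vF(\vx)).
\]
Writing $f_Q = f(\,\cdot\,;X)$ and $h = f_A - \delta f_Q$, the hypothesis $s \geq kN(\log K + \log\log K + 27)$ sits just above the threshold of Theorem \ref{asymptotic formula} and of the mean value theorem. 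Combined with positivity of $\sigma_\infty \prod_p \sigma_p$ (guaranteed by the non-singularity of $\vc$), this gives $R_{\vc,\Phi}([Q]) \geq c_0 X^{2s-K}$. Expanding the difference $\prod_j f_A(c_j\valpha) - \delta^s \prod_j f_Q(c_j\valpha)$ telescopically as a sum of $s$ products each containing exactly one factor of $h$, and applying H\"older's inequality together with the mean value bound of Theorem \ref{intro VMVT} to $\oint |f_Q|^{s-1}\intd\valpha$, one bounds each error term by $\norm{h}_\infty \cdot C\delta^{s-1}X^{2s-2-K+\Delta}$ for an admissible exponent $\Delta < 1$. Choosing $c$ in \eqref{A conditions} smaller than $c_0/2$ therefore forces
\[
\norm{h}_\infty \geq c_1 \delta^s X^{2-\Delta}
\]
for some positive $c_1 = c_1(\vc,\Phi)$.

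Next I would localise the frequency $\valpha_0$ where this bound is attained to a major arc. Since $|h(\valpha_0)| \leq |f_A(\valpha_0)| + \delta |f_Q(\valpha_0)|$, applying Theorem \ref{minor arc estimate theorem} to $f_Q$, together with a parallel argument for $f_A$ obtained by a routine modification of Lemma \ref{Vinogradov method lemma} (replacing the box $[X]^2$ by $A$ throughout the shifted sum), yields $\max(|f_A(\valpha_0)|,|f_Q(\valpha_0)|) \leq X^{2-\sigma}$ on $\m$ for some $\sigma = \sigma(s,\Phi) > 0$. The hypothesis $X \geq \exp(C/\delta)$ with $C$ large enough then makes $c_1\delta^s X^{2-\Delta}$ exceed $2X^{2-\sigma}$, a contradiction, so $\valpha_0 \in \M(q,\va)$ for some co-prime $(q,\va)$ with $q \leq X^{1/4}$ and $|\beta_i| \leq q^{-1}X^{\recip{4}-k_i}$, where $\valpha_0 = \va/q + \vbeta$.

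Finally I would convert this major-arc correlation into a density increment on an arithmetic sub-square. Using \eqref{matrix translation invariance}, for $\vr\in[q]^2$ and $\vy\in\Z^2$ one has
\[
\valpha_0\cdot\vF(\vr+q\vy) = \valpha_0\cdot\vF(\vr) + (\Xi_\vr^T\valpha_0)\cdot\vF(q\vy).
\]
Since $F_i$ is homogeneous of degree $k_i$, one computes $\vF(q\vy) = \mathrm{diag}(q^{k_i})\vF(\vy)$, and expanding $\valpha_0 = \va/q + \vbeta$ the integer-valued contribution from $\va/q$ is absorbed into the $\vr$-dependent phase, leaving an oscillating error of order $q^{k_i}|\beta_i||F_i(\vy)| \leq q^{k_i-1}X^{\recip{4}-k_i}X_1^{k_i}$. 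Taking $X_1 = 2^{-k}X^\tau$ with $\tau = \tau(k) < 1$ sufficiently small, this error is $o(1)$ uniformly over $\vy$ in any half-open sub-square of side $X_1$, so $e(\valpha_0\cdot\vF(\vx))$ is essentially constant, equal to $e(\valpha_0\cdot\vF(\vr))$, on each arithmetic sub-square $P = \vr + q\cdot[Q_1]$. Partitioning $[Q]$ into $O((X/X_1)^2)$ such sets, the triangle inequality applied to $h(\valpha_0)$ yields $\sum_{P}\bigabs{\sum_{\vx\in P}(1_A(\vx)-\delta)}\gg \delta^s X^{2-\Delta}$. Since $\sum_{\vx\in[Q]}(1_A(\vx) - \delta) = 0$, the usual positive-negative pigeonhole extracts a single sub-square $P_0 = \vr_0 + q\cdot[Q_1]$ on which $|A\cap P_0| \geq (\delta + c\delta^s)|[Q_1]|$.

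The main obstacle will be the phase-control estimate in the last paragraph: one must track the error induced by $\vbeta \neq 0$ through the lower-unitriangular matrix $\Xi_\vr$ and verify that it remains negligible uniformly across sub-squares of side $X^\tau$. The ordering $k_1 \geq \dots \geq k_N = 1$ is essential here, since the major-arc condition tightens the constraint on $\beta_i$ precisely when $k_i$ is large, compensating for the growth of $|F_i(\vy)|$; the factor $2^{-k}$ in the allowed side-length of $Q_1$ records the boundary losses incurred when fitting the arithmetic sub-square inside $[Q]$ after the dilation by $q$.
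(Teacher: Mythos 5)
Your overall architecture (lower bound for $R_{\vc,\Phi}([Q])$ via the asymptotic formula, telescoping to extract a large Fourier coefficient of the balanced function, partition into arithmetic sub-squares where the phase is nearly constant, then pigeonhole) matches the paper's, but two of your steps contain genuine gaps. First, you bound the moment of $f_Q$ by Theorem \ref{intro VMVT}, which carries an unavoidable loss $X^{\Delta}$ with $\Delta>0$ fixed, and so you only reach $\norm{h}_\infty \gg \delta^s X^{2-\Delta}$. This is too weak to close the argument: after distributing the mass $\delta^s X^{2-\Delta}$ over the $\approx X^2/X_1^2$ cells of the partition, the pigeonhole yields a relative density increment of only $\delta^s X^{-\Delta}$, not $\delta^s$; since $X \geq \exp(C/\delta)$ this increment is super-exponentially small in $1/\delta$ and the iteration in the deduction of Theorem \ref{full density theorem} collapses. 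The paper avoids this by writing $s = 1+\epsilon+2t$ and bounding the \emph{even} moment $\oint |f|^{2t}\,\intd\valpha \ll X^{4t-K}$ using Theorem \ref{asymptotic formula} itself (the count of solutions to the underlying translation-invariant equation), which gives the correct order of magnitude with no $\Delta$-loss; this is precisely why the hypothesis of Theorem \ref{full density theorem} has the constant $27$ rather than the $26$ of Theorem \ref{asymptotic formula}.

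Second, your localisation of $\valpha_0$ to a major arc rests on the claim that $|f_A(\valpha_0)| \leq X^{2-\sigma}$ on $\m$ via "a routine modification of Lemma \ref{Vinogradov method lemma}". No such bound holds for a general dense set $A$: one may take $A$ to be, say, the set of $\vx \in [X]^2$ with $\set{\valpha\cdot\vF(\vx)} \in [0,\trecip{2})$ for a minor-arc $\valpha$, which has density about $\trecip{2}$ and satisfies $|f_A(\valpha)| \gg X^2$. Indeed, detecting exactly such correlations is the point of the increment step, and they can occur at minor-arc frequencies, so no major/minor dichotomy for $\valpha_0$ is available. The paper sidesteps this entirely with Lemma \ref{small diameter lemma}: by Baker's simultaneous Diophantine approximation theorem applied to the coefficients of the phase polynomial $P(\vx)=\valpha_0\cdot\vF(\vx)$ (and induction on the degree for the lower-order part), one partitions $[Q]$ into sets $\vr_i + q_i\cdot[Q_i]$ on which $\norm{P(\vx)-P(\vy)} \ll X^{-\tau_k}$, \emph{uniformly for every} $\valpha_0 \in \T^N$. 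Your final paragraph's phase-control computation is in the right spirit, but it must be carried out without assuming $\valpha_0$ lies on a major arc.
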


\begin{proof}[Proof that Lemma \ref{increment lemma} implies Theorem \ref{full density theorem}]

Let us suppose that $A \subset [X]^2$ contains only diagonal solutions to \eqref{multi equation} and let $\tau$, $C$ and $c$ be as in Lemma \ref{increment lemma}.  We aim to construct a sequence of quadruples $(Q_i, A_i, X_i, \delta_i)$ satisfying all of the following conditions. 
\begin{enumerate}[(i)]
\item $Q_i$ is a half-open square of side-length $X_i$.
\item $A_i \subset [Q_i]$ with $A_i = \delta_i |[Q_i]|$.
\item $A_i$ contains only diagonal solutions to \eqref{multi equation}.
\item $X_{i+1} \geq 2^{-k}X_i^{\tau}$.
\item $\delta_{i+1} \geq \delta_i + c\delta_i^s$.
\end{enumerate}
Taking $Q_0 = Q$, $A_0 = A$, $X_0 = X$ and $\delta_0 = |A_0|/|[Q_0]|$, we have our initial quadruple.  Let us suppose we have constructed $(Q_j, A_j, X_j, \delta_j)$ for all $1 \leq j \leq i$.  In order to apply Lemma \ref{increment lemma}, we must estimate $R_{\vc, \Phi}(A_i)$.  First notice that $A_i^s$ contains exactly $|A_i|$ solutions to \eqref{multi equation} with $\vx_1 = \dots = \vx_s$.  Any other solution counted by $R_{\vc, \Phi}(A_i)$ must have all $\vx_j$ contained on some affine line $L$, where $ |L \cap A_i| \geq 2$.  Any 2-set $\set{\vx, \vy} \subset A_i$ is contained in exactly one affine line $L$.  Letting $\mathcal{L}$ denote the set of affine lines which intersect $A_i$ in at least two places, we therefore have
\begin{align*}
R_{\vc, \Phi}(A_i) & \leq |A_i| + \sum_{L \in \mathcal{L}} |L \cap [Q_i]|^s\\
			& \leq |A_i| + \binom{|A_i|}{2}\max_{L \in \mathcal{L}} |L \cap [Q_i]|^s\\
			& \ll X_i^4\max_{L \in \mathcal{L}} |L \cap [Q_i]|^s.
\end{align*}

The set of integer points in $L \cap Q_i$ projects injectively onto either the $x$ or $y$ axis, with image equal to a set of integer points contained in a subinterval of length $X_i$.  Hence $|L \cap Q_i| \leq X_i +1 $.  Thus
$$
R_{\vc, \Phi}(A_i) \ll  X_i^{s + 4}.
$$ 
Our assumption on the size of $s$ certainly ensures that $2s -K > s+4$, hence taking $C$ sufficiently large in the assumption 
\begin{equation}\label{second X_i lower bound}
X_i \geq \exp(C/\delta_i),
\end{equation} 
certainly implies that
$$
R_{\vc, \Phi}(A_i) \leq 2^{s+3} X_i^{s+4} \leq c \delta_i^{s} X_i^{2s - K}.
$$
Assuming \eqref{second X_i lower bound}, we can therefore employ Lemma \ref{increment lemma} to obtain a half-open square $Q_{i+1}$ of side-length $X_{i+1} \geq 2^{-k} X_i^{\tau}$, together with $q$ and $\vr$ such that
$$
|A_i\cap(\vr + q \cdot [Q_{i+1}])| \geq (\delta_i + c\delta_i^{s})|[Q_{i+1}]|.
$$
Let us set $A_{i+1} = \set{ \vx \in [Q_{i+1}] : \vr + q \vx \in A_i}$ and $\delta_{i+1} = |A_{i+1}|/ |[Q_{i+1}]|$.  The fact that $c_1 + \dots + c_s = 0$ means the system \eqref{multi equation} is translation-dilation invariant.  Using this, it follows that if $(\vx_1, \dots, \vx_s) \in A_{i+1}^s$ is a solution to \eqref{multi equation}, then the tuple $(\vr + q \vx_1, \dots, \vr + q \vx_s)$ is a solution to \eqref{multi equation} in $A_i^s$.  Since $A_i$ has only diagonal solutions to \eqref{multi equation}, it follows that $(\vx_1, \dots, \vx_s)$ is itself diagonal.   Assuming \eqref{second X_i lower bound}, we have therefore obtained another quadruple $(Q_{i+1}, A_{i+1}, X_{i+1}, \delta_{i+1})$ satisfying conditions (i) to (v).

As long as \eqref{second X_i lower bound} holds, we can iterate this construction.  After $\ceil{c^{-1} \delta^{1-s}}$ such iterations we have a density $\delta_i$ of size at least $2 \delta$.  After a further $\ceil{ c^{-1} (2\delta)^{1-s}}$ such iterations, we have a density of at least $4 \delta$.  Thus, setting $L = \floor{\log_2 (\delta^{-1})}$, we see that after a total of
$$
I = \sum_{l =0}^L \ceil{ c^{-1} (2^l\delta)^{1-s}}
$$
iterations, we have a density of $2^{L+1} \delta > 1$ (a contradiction).  Hence \eqref{second X_i lower bound} cannot hold for all $0 \leq i \leq I$.  Thus for some $i \in \set{0, 1, \dots, I}$ we have
\begin{equation}\label{delta I lower bound}
\begin{split}
\exp(C/\delta)  \geq \exp(C/\delta_i) & \geq X_i\\
							& \geq X_{i-1}^\tau 2^{-k} \geq X_{i-2}^{\tau^2} 2^{-k(1 + \tau)} \geq \dots \geq X_0^{\tau^i} 2^{-k/(1-\tau)}\\
							& \geq X^{\tau^i} 2^{-k/(1-\tau)}.
\end{split}
\end{equation}
Taking logarithms in \eqref{delta I lower bound}, we therefore have
$$
C/\delta \geq \tau^i \log X - \tfrac{k}{1- \tau}.
$$
Notice that $i \leq I \leq 2 c^{-1} \delta^{1-s}$.  So on taking logarithms again we  have
\begin{equation}\label{final almost bound}
\log(C\delta^{-1}+k(1-\tau)^{-1}) + 2 c^{-1} \delta^{1-s} \log (1/\tau) \geq \log\log X.
\end{equation}
Crude estimation shows that the left hand side of \eqref{final almost bound} is $O_{k, \Phi}(\delta^{1-s})$, as required.
\end{proof}

We begin the proof of Lemma \ref{increment lemma} with the following general result on partitioning phase polynomials into approximate level sets.

\begin{lemma}\label{small diameter lemma}  Let $P(x_1,x_2)$ denote a real polynomial of degree $k$.  Set 
$$
\tau_k^{-1} = 24^k (k!)^2 2^{k(k+1)/2}.
$$
There exists a positive constant $C = C(k)$, such that for any half-open square $Q$ of side-length $X$, we can find half-open squares $Q_1, \dots, Q_n$ each with side-length at least $ 2^{-k}X^{\tau_k}$, along with $q_i \in \N$ and $\vr_i \in \Z^2$, such that the sets $\vr_i + q_i \cdot [Q_i]$ partition $[Q]$, and furthermore for any $\vx, \vy \in \vr_i + q_i \cdot [Q_i]$ we have
\begin{equation}
\norm{ P(\vx) - P(\vy) } \leq CX^{-\tau_k}.
\end{equation}
\end{lemma}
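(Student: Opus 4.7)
The plan is to proceed by strong induction on the degree $k$ of $P$. The base case $k=0$ is immediate: $P$ is constant, so taking $n=1$, $Q_1 = Q$, $q_1 = 1$, $\vr_1 = \mathbf{0}$ produces zero variation.

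For the inductive step, assume the lemma holds in degree $< k$ with constant $\tau_{k-1}$, and write $P(\vx) = \sum_{|\vi| \leq k} a_\vi \vx^\vi$. First I would apply multidimensional Dirichlet's theorem to the $k+1$ top-degree coefficients $(a_\vi)_{|\vi|=k}$ with a parameter $H = X^{\alpha_k}$ to be determined, producing $1 \leq q \leq H$ and integers $p_\vi$ with $|qa_\vi - p_\vi| \leq H^{-1/(k+1)}$. Next, partition $[Q]$ into residue classes modulo $q$; on each class parameterise $\vx = \vr + q\vy$, so that $\vy$ ranges over a half-open square $Q^*$ of side $\approx X/q$. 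Since $q^{k-1} p_\vi \vy^\vi \in \Z$ for integer $\vy$, expanding $P(\vr + q\vy)$ and reducing modulo $1$ yields
\begin{equation*}
P(\vr + q\vy) \equiv \sum_{|\vi|=k} q^{k-1} \theta_\vi \vy^\vi + \tilde P(\vy, \vr) \pmod{1},
\end{equation*}
where $|\theta_\vi| \leq H^{-1/(k+1)}$ and $\tilde P(\cdot, \vr)$ has degree $\leq k-1$ in $\vy$ (with coefficients that are polynomials in $\vr$). I would then apply the induction hypothesis to $\tilde P(\cdot, \vr)$ on $Q^*$ for each fixed $\vr$, and further subdivide the resulting sub-squares to a common side length $Y \geq 2^{-k} X^{\tau_k}$ chosen small enough that the residual top-degree piece, controlled by the gradient bound $O(k^2 H^{-1/(k+1)} X^{k-1} Y)$ on $[X/q]^2$, contributes at most $\tfrac{1}{2} X^{-\tau_k}$ in variation.

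The main obstacle will be closing the recursion on $\tau_k$ in a way consistent with the stated value $\tau_k^{-1} = 24^k(k!)^2 2^{k(k+1)/2}$. Inheriting the inductive sub-square bound $2^{-(k-1)}(X/q)^{\tau_{k-1}} \geq 2^{-k} X^{\tau_k}$ forces $\alpha_k \leq 1 - \tau_k/\tau_{k-1}$, while the residual gradient estimate forces $\alpha_k \geq (k+1)(k - 1 + 2\tau_k)$ to leading order. These constraints are mutually inconsistent for $k \geq 2$ under a single Dirichlet refinement, which indicates that the actual argument must nest several successive Dirichlet/substitute steps inside each outer inductive step, driving $|\theta_\vi|$ (and hence the top-degree residual) to a genuinely negligible size before invoking the $(k-1)$-level hypothesis. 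The recursion $\tau_k = \tau_{k-1}/(24\, k^2\, 2^k)$ implicit in the stated constant is consistent with such a nested scheme losing a factor of order $k^2 2^k$ per increase in degree, with the factor of $24$ absorbing the cumulative logarithmic and combinatorial overhead (including the $k(k+1)$ from counting top-degree monomials, the $2^k$ from the side-length normalisations, and the $\log X$ slack required to convert the asymptotic parameter inequalities into clean algebraic ones for all sufficiently large $X$).
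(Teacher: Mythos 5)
There is a genuine gap, and you have in fact located it yourself: the single Dirichlet approximation $|qa_\vi - p_\vi|\le H^{-1/(k+1)}$ to the top-degree coefficients cannot close the recursion, because after the substitution $\vx=\vr+q\vy$ the residual top-degree piece carries a factor $q^{k-1}\theta_\vi$ with $q$ possibly as large as $H$, and no admissible choice of $H$ makes this negligible on a box of side $X/q$. The speculated repair --- nesting several successive Dirichlet steps inside each outer inductive step --- is not what the argument requires and is never constructed, so the proposal does not prove the lemma.

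The missing idea is to approximate $q^k\alpha_l$ rather than $q\alpha_l$. By a Weyl-sum based simultaneous approximation theorem (Baker's Theorem 8.1, as invoked in the paper's proof), for every $Y\ge 1$ there is a single $1\le q\le Y$ with $\norm{q^k\alpha_l}\le C_1Y^{-\sigma_k}$ for \emph{all} top coefficients $\alpha_l$ at once, where $\sigma_k^{-1}=6k2^k$; the point is that the same $q$ enters raised to the $k$th power, which is exactly the weight it acquires in the Taylor expansion $P(\vb+q\vx)=q^kF(\vx)+G(\vx;\vb,q)$. Taking $Y=X^{1/2}$, one splits $[Q]$ into residue classes modulo $q$ and then subdivides each class into squares so small that in local coordinates $|\vx|\le X^{\sigma_k/(4k)}$; on each such square every monomial of $F$ has size at most $X^{\sigma_k/4}$ while $\norm{q^k\alpha_l}\le C_1X^{-\sigma_k/2}$, so $q^kF(\vx)$ varies modulo $1$ by only $O_k(X^{-\sigma_k/4})$, and the induction hypothesis is applied solely to the degree-$(k-1)$ polynomial $G$. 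One approximation step per degree suffices, and the recursion $\tau_k=\sigma_k\tau_{k-1}/(4k)$, i.e. $\tau_k^{-1}=24k^22^k\,\tau_{k-1}^{-1}$, yields precisely the stated constant: the per-degree loss of order $k^22^k$ that you inferred from the constant comes from Baker's exponent $\sigma_k$ and the choice of side length $X^{\sigma_k/(4k)}$, not from any nested Dirichlet scheme. Note also the structural difference that the paper shrinks the squares \emph{before} estimating the top form, so the smallness of $\norm{q^k\alpha_l}$ beats the size of the monomials directly, instead of attempting a gradient bound over a box of side $X/q$.
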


\begin{proof}
By Taylor's formula
\begin{equation}\label{another taylor}
\begin{split}
P(\vr + q \vx) = &\sum_{u,v \geq 0} \frac{r_1^u r_2^v}{u! v!} P^{u,v}(q\vx)\\
			& = q^k F(\vx) + G(\vx; \vr, q),
\end{split}
\end{equation}
where $F(\vx)$ is a homogeneous real polynomial of degree $k$, and $G(\vx) = G(\vx; \vr, q)$ is a polynomial of degree strictly less than $k$.  Set
\begin{equation}
F(\vx) = \sum_{0 \leq l \leq k} \alpha_l x_1^l x_2^{k-l},
\end{equation}
and let $\sigma_k^{-1} = 6 k 2^k$.  It follows from Baker \cite[Theorem 8.1]{baker86} that there exists $C_1 = C_1(k)$ such that for any $Y \geq 1$ there is some some $1 \leq q \leq Y$ satisfying
\begin{equation}\label{simul dioph approx}
\bignorm{q^k \alpha_l} \leq C_1Y^{- \sigma_k} \quad (0 \leq l \leq k).
\end{equation}
Let $Y = X^{1/2}$ in \eqref{simul dioph approx}, where $X$ is the side-length of $Q$. Partitioning $[Q]$ into congruence classes modulo $q$, we have
$$
[Q] = \bigcup_{\vr \in [q]^2} \vr + q \cdot [Q(\vr)],
$$
where $Q(\vr)$ is a half-open square of side-length $X/q$.  Let us set
$$
t =\ceil{ q^{-1} X^{1- \frac{\sigma_k}{4k}}}.
$$
Then we can partition each $Q(\vr)$ into $t^2$ half-open squares $Q(\vr, \vt)$ ($\vt \in [t]^2$), each of side-length $X/(qt)$.  For fixed $\vr$ and $\vt$ let us pick $\va(\vr, \vt) \in [Q(\vr, \vt)]$.  Then we have $[Q(\vr, \vt)] = \va(\vr, \vt) + [Q'(\vr, \vt)]$, where $Q'(\vr, \vt)$ is a half-open square of side-length $X/(qt)$ satisfying
\begin{equation}
Q'(\vr, \vt) \subset [-X/(qt), X/(qt)]^2.
\end{equation}
  It follows that there exist pairs $\vb(\vr, \vt) \in \Z^2$ ($\vr \in [q]^2$, $\vt \in [\vT]$) such that the set $[Q]$ is equal to the disjoint union
$$
\bigcup_{\vr \in [q]^2} \bigcup_{\vt \in [\vT]} \bigbrac{ \vb(\vr, \vt) + q\cdot [Q'(\vr, \vt)]}.
$$

Clearly $X/(qt) \leq X^{\sigma_k/(4k)}$.  Since $q \leq X^{1/2} \leq X^{1 - \sigma_k/(4k)}$, we also have 
$$
X/(qt) \geq \frac{X}{X^{1- \sigma_k/(2k)} + q} \geq \trecip{2} X^{\sigma_k/(4k)} .
$$
Hence the side-length of each $Q'(\vr, \vt)$ is between $\trecip{2}X^{\sigma_k/(4k)}$ and $X^{\sigma_k/(4k)}$.  It follows that for any $\vx, \vy \in [Q'(\vr, \vt)]$ we have
\begin{equation}\label{F dioph ineq}
\bignorm{q^k(F(\vx) - F(\vy))}   \ll_k X^{-\sigma_k/4}.
\end{equation}

Write $G_{\vr, \vt}(\vx)$ for the polynomial $G(\vx; \vb(\vr, \vt), q)$.  By induction, there exists a partition of $[Q'(\vr, \vt)]$ into sets of the form $\vs_i + q_i \cdot [Q_i'(\vr, \vt)]$ ($1 \leq i \leq m = m(\vr, \vt)$), where $Q_i'(\vr, \vt)$ is a half-open square of side-length at least 
$$
2^{1-k-\tau_{k-1}}X^{\sigma_k\tau_{k-1}/(4k)},
$$
and such that for any $\vx, \vy \in [Q_i'(\vr, \vt)]$ we have
\begin{equation}\label{G dioph ineq}
\norm{G_{\vr, \vt}(\vs_i + q_i\vx) - G_{\vr, \vt}(\vs_i + q_i \vy)} \ll_k X^{-\sigma_k\tau_{k-1}/(4k) }.
\end{equation}

Let us write $q'_i(\vr, \vt)$ for $q q_i(\vr, \vt)$ and $\vb'_i(\vr, \vt)$ for $\vb(\vr, \vt) + q \vs_i(\vr, \vt)$.
Then $[Q]$ is partitioned by the sets
$$
\vb'_i(\vr, \vt) + q'_i(\vr, \vt) \cdot [Q_i'(\vr, \vt)] \quad (\vr \in [q]^2,\ \vt \in [\vT],\ 1 \leq i \leq m(\vr, \vt)).
$$
Combining \eqref{another taylor}, \eqref{F dioph ineq} and \eqref{G dioph ineq}, we see that for each $\vx, \vy \in  [Q_i'(\vr, \vt)]$ we have, on writing $\vb'= \vb'_i(\vr, \vt)$ and $q' = q'_i(\vr, \vt)$, that
$$
\norm{P(\vb'+q'\vx) - P(\vb' + q'\vy)} \ll X^{-\sigma_k \tau_{k-1}/(4k)}.
$$
A simple calculation reveals that $\sigma_k \tau_{k-1}/(4k) = \tau_k$, as required.
\end{proof}

\begin{proof}[Proof of Lemma \ref{increment lemma}]
Let us define 
$$
f_A(\valpha) = \sum_{\vx} 1_{A}(\vx) e(\valpha\cdot\vF(\vx)),
$$
together with $f(\valpha) = f_{[Q]}(\valpha)$ and $g_A(\valpha) = f_A(\valpha) - \delta f(\valpha)$.  Using translation invariance of \eqref{multi equation}, we have $R_{\vc, \Phi}([Q]) \sim R_{\vc, \Phi}(X)$.  Let $c_1$ equal the quantity  
$
\sigma_\infty(\vc) \prod_p \sigma_p(\vc)
$
from Theorem \ref{asymptotic formula}.  Provided we take $C$ in Lemma \ref{increment lemma} sufficiently large, so that $X \geq \exp(C/\delta^{-1})\gg_{\vc, \Phi} 1$, we can use Theorem \ref{asymptotic formula} to ensure that 
\begin{equation}\label{uniform lower bound}
R_{\vc, \Phi}([Q]) \geq \trecip{2}c_1X^{2s - K}.
\end{equation}
The assumption that $\vc$ is a non-singular choice for $\Phi$ implies that $c_1$ is positive.  Let us take $c$ in Lemma \ref{increment lemma} sufficiently small, say $c \leq \trecip{4} c_1$.  Combining \eqref{A conditions}, \eqref{uniform lower bound}, orthogonality and H\"{o}lder's inequality, we have
\begin{align*}
\trecip{4}c_1\delta^s X^{2s - K}& \leq |R_{\vc, \Phi}(A) - \delta^s R_{\vc, \Phi}([Q])|\\
				& \leq \oint |f_A(c_1\valpha) \dotsm f_A(c_s\valpha) - \delta^s f(c_1\valpha) \dotsm f(c_s\valpha)| \intd \valpha\\
				& \leq \sup_{\valpha}|g_A(\valpha)| X^{2\epsilon} \oint \bigbrac{ |f_A(\valpha)|^{2t} + |f(\valpha)|^{2t}} \intd\valpha,
\end{align*}
where $s = 1 + \epsilon + 2t$, for some $\epsilon \in \set{0,1}$.  Since $2t \geq kN(\log K + \log \log K + 26)$, we can use Theorem \ref{asymptotic formula} (together with the underlying Diophantine equation), to conclude that
\begin{align*}
\oint |f_A(\valpha)|^{2t} \intd \valpha & \leq \oint |f(\valpha)|^{2t} \intd\valpha\\
							& \ll_{\vc, \Phi} X^{4t - K}.
\end{align*}

Setting $b_A(\vx) = 1_A(\vx) - \delta 1_{[Q]}(\vx)$, we see that there exists $\valpha \in \T^N$ such that 
\begin{equation}\label{balanced correlation}
\Bigabs{ \sum_{\vx \in [Q]^2} b_A(\vx) e(\valpha \cdot \vF(\vx))} = |g_A(\valpha)| \gg_{\vc, \Phi} \delta^s X^2.
\end{equation}
Let $\tau = \tau_k$ and $C_1 = C_1(k)$ be as in Lemma \ref{small diameter lemma}, and consider the polynomial $P(\vx) = \valpha \cdot \vF(\vx)$.  Then there exist half-open squares $Q_1, \dots, Q_n$ each of side-length at least $2^{-k} X^\tau$, along with $\vr_i$ and $q_i$ ($1 \leq i \leq n$) such that the sets $\vr_i + q_i\cdot [Q_i]$ partition $[Q]$, and for any $\vx, \vy \in \vr_i + q_i\cdot [Q_i]$ we have $\norm{P(\vx) - P(\vy)} \leq C_1 X^{-\tau}$.  Notice that this implies that $|e(P(\vx) - e(P(\vy))| \ll X^{-\tau}$.  Thus
\begin{align*}
\biggabs{ \sum_{\vx \in [X]^2} b_A(\vx) e(\valpha \cdot \vF(\vx))} & \leq \sum_{i=1}^n\biggabs{ \sum_{\vx \in \vr_i + q_i\cdot [Q_i]} b_A(\vx) e(P(\vx))} \\
											& = \sum_{i=1}^n\biggabs{ \sum_{\vx \in \vr_i + q_i\cdot [Q_i]} b_A(\vx)} + O\bigbrac{X^{2-\tau}}. 
\end{align*}
We can take $C$ in Lemma \ref{increment lemma} sufficiently large to ensure that the lower bound $X \geq \exp(C/\delta)$ implies that the $O(X^{2-\tau})$ term above is at most half the size of the right hand side of \eqref{balanced correlation}.  We thereby obtain that
\begin{equation}\label{balance large absolute}
\sum_{i=1}^n\biggabs{ \sum_{\vx \in \vr_i + q_i\cdot [Q_i]} b_A(\vx)} \gg_{\vc,\Phi} \delta^s X^2.
\end{equation}
Let $\mathcal{I}$ denote the set of $i \in [n]$ for which $\sum_{\vx \in \vr_i + q_i\cdot [Q_i]} b_A(\vx) \geq 0$.  Since $b_A$ has average zero, we can add $\sum_{\vx} b_A(\vx)$ to the left side of \eqref{balance large absolute}, to obtain
\begin{equation}
\sum_{i\in \mathcal{I}} \Bigbrac{\sum_{\vx \in \vr_i + q_i \cdot [Q_i]} b_A(\vx)} \gg_{\vc, \Phi} \delta^s X^2.
\end{equation}
The density increment \eqref{square increment} now follows from the pigeon-hole principle, provided we take $c = c(\vc, \Phi)$ sufficiently small.
\end{proof}

Our originally advertised theorem, Theorem \ref{intro result 1}, now almost follows.  It  remains to show that $kN(\log K + \log \log K +27) \leq \tfrac{3}{4} k^3 \log k (1 + o(1))$.  We have the trivial bound $N \leq k^2$ and $K \leq kN \leq k^3$.  The bound $N \leq \tfrac{k^2}{4} (1 + o(1))$ takes a little more calculation, but follows from the fact that the number of linearly independent derivatives $\Phi^{u,v}$ with $u+ v = k - d$ is at most $\max\set{k+1 - d, d+1}$.

\begin{acknowledgements} The author would like to express his gratitude to Professor Wooley for his unending encouragement, patience and generosity with ideas, and Professor Parsell for his insights into \S \ref{Weyl-type section}. 
\end{acknowledgements}

 \bibliographystyle{amsplain}
 \bibliography{/Users/seanprendiville/Dropbox/Maths_Latex_Docs/Bibliography}

\end{document}